\documentclass[a4paper, 10pt]{amsart}
\usepackage[latin1]{inputenc}
\usepackage[francais]{babel}
\usepackage{amsfonts}
\usepackage{amsmath}
\usepackage{amssymb}
\usepackage{amsthm}
\usepackage{amsxtra}
\usepackage{amstext}
\usepackage{eufrak}
\usepackage{latexsym}
\usepackage[T1]{fontenc}
\usepackage[all]{xy}

\setcounter{tocdepth}{1}

\def\FF{\mathbb{F}}
\def\FFF{\overline{\mathbb{F}}}
\def\NN{\mathbb{N}}
\def\PP{\mathbb{P}}
\def\QQ{\mathbb{Q}}
\def\QQQ{\overline{\mathbb{Q}}}

\newtheorem{thm}{Th\'eor\`eme}
\newtheorem{cor}{Corollaire}
\newtheorem{prop}{Proposition}
\newtheorem{lm}{Lemme}
\newtheorem{rem}{Remarque}
\newenvironment{NB}{\begin{rem} \rm }{\end{rem}}

\title[Repr\'esentations modulo $p$ de $SL_{2}(F)$]{Une \'etude des repr\'esentations modulo $p$ de $SL_{2}(F)$}
\author{Ramla ABDELLATIF}
\address{Universit\'e Paris-Sud 11, 91 405 Orsay Cedex, ramla.abdellatif@math.u-psud.fr}
\date{}
\begin{document}
\begin{abstract}
A la suite des travaux de Barthel-Livn\'e \cite{BL1, BL2} et Breuil \cite{Br1}, nous \'etudions les repr\'esentations modulo $p$ de $SL_{2}(F)$ lorsque $F$ est un corps local complet de caract\'eristique r\'esiduelle $p$ et de corps r\'esiduel fini. En particulier, nous les relions aux repr\'esentations modulo $p$ de $GL_{2}(F)$ et, lorsque $F = \QQ_{p}$, nous donnons une description explicite des repr\'esentations supersinguli\`eres de $SL_{2}(\QQ_{p})$, qui apparaissent par paquets de taille 1 ou 2.
\end{abstract}

\maketitle
\tableofcontents

\section*{Introduction}
Soient $p$ un nombre premier et $F$ un corps local non archim\'edien de caract\'eristique r\'esiduelle $p$. Dans les ann\'ees 90, Barthel et Livn\'e \cite{BL1} ont donn\'e une classification des repr\'esentations modulo $p$ de $GL_{2}(F)$ faisant ainsi appara\^itre une famille de repr\'esentations dites supersinguli\`eres dont la description explicite est inconnue dans le cas g\'en\'eral. Le seul cas bien connu est celui o\`u $F = \QQ_{p}$, dans lequel les travaux de Breuil \cite{Br1} permettent de d\'ecrire compl\`etement les repr\'esentations supersinguli\`eres de $GL_{2}(\QQ_{p})$ et d'obtenir ce qu'il appelle une << correspondance de Langlands locale semi-simple modulo $p$ >> \cite[D\'efinition 4.2.4]{Br1}.\\
\indent L'objectif de ce papier est d'\'etudier les repr\'esentations modulo $p$ de $SL_{2}(F)$. En effet, le groupe $SL_{2}(F)$ est assez proche de $GL_{2}(F)$ pour que l'on puisse esp\'erer obtenir des r\'esultats analogues, mais suffisamment diff\'erent pour que l'on voie d\'ej\`a appara\^itre quelques divergences pouvant peut-\^etre aider \`a comprendre ce qui bloque dans l'\'etude des repr\'esentations modulo $p$ de $GL_{2}(F)$ lorsque $F$ n'est pas $\QQ_{p}$, voire de $GL_{n}(F)$.

\subsection*{Pr\'esentation des principaux r\'esultats}
Nous commen\c{c}ons par d\'emontrer un r\'esultat de classification analogue \`a \cite[page 290]{BL1} lorsque $F$ est de caract\'eristique diff\'erente de $2$.
\begin{thm}
Supposons que $F$ soit de caract\'eristique diff\'erente de $2$.
\begin{enumerate}
\item Les classes d'isomorphisme des repr\'esentations lisses irr\'eductibles \`a caract\`ere central de $SL_{2}(F)$ sur $\FFF_{p}$ se partitionnent en quatre familles :
\begin{enumerate}
\item le caract\`ere trivial ;
\item les s\'eries principales ;
\item les s\'eries sp\'eciales ;
\item les supersinguli\`eres.
\end{enumerate}
\item Il n'existe pas d'isomorphisme entre repr\'esentations provenant de familles diff\'erentes.
\end{enumerate}
\label{nonssg}
\end{thm}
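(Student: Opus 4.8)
The plan is to transpose the method of Barthel--Livn\'e \cite{BL1} from $GL_2(F)$ to $SL_2(F)$, watching the features specific to $SL_2$. A first simplification is that the center $Z=\{\pm\mathrm{Id}\}$ lies in $K:=SL_2(\mathcal O_F)$, so fixing a central character adds no information beyond the choice of an irreducible smooth $\FFF_p$-representation of $K$: since $K_1:=\ker\!\big(K\twoheadrightarrow SL_2(k_F)\big)$ is pro-$p$ we have $\pi^{K_1}\neq 0$, hence $\pi|_K$ contains an irreducible $\FFF_p[SL_2(k_F)]$-module $\sigma$ (a \emph{weight}), and Frobenius reciprocity together with the irreducibility of $\pi$ give a surjection $\mathrm{ind}_K^{SL_2(F)}\sigma\twoheadrightarrow\pi$.

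The heart of the matter is the Hecke algebra $\mathcal H(\sigma):=\mathrm{End}_{SL_2(F)}\!\big(\mathrm{ind}_K^{SL_2(F)}\sigma\big)$. Here the Cartan decomposition reads $SL_2(F)=\coprod_{n\ge 0}K\alpha^nK$ with $\alpha=\mathrm{diag}(\varpi,\varpi^{-1})$, the absence of odd powers reflecting that $SL_2(F)$ has two orbits of vertices on its Bruhat--Tits tree; an explicit computation of intertwiners between the irreducible $\FFF_p$-representations of $SL_2(k_F)$ (tensor products of Frobenius-twisted symmetric powers), parallel to \cite{BL1}, then gives $\mathcal H(\sigma)\cong\FFF_p[T]$, a polynomial algebra in the Hecke operator $T$ attached to $K\alpha K$. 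A Schur-type argument shows (as in \cite{BL1}, using the central character hypothesis) that $T$ acts on $\mathrm{Hom}_K(\sigma,\pi)$ through a scalar $\lambda\in\FFF_p$, so that $\pi$ is a quotient of $\pi_{\sigma,\lambda}:=\mathrm{ind}_K^{SL_2(F)}\sigma\,/\,(T-\lambda)$.

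It remains to describe $\pi_{\sigma,\lambda}$. The case $\lambda=0$ is by definition that of the supersingular family. When $\lambda\neq 0$ I would show, following \cite{BL1}, that $\pi_{\sigma,\lambda}$ has as unique irreducible quotient a principal series $\mathrm{Ind}_{B'(F)}^{SL_2(F)}\chi$ (with $B'\subset SL_2$ the upper triangular Borel) for a character $\chi\colon F^\times\to\FFF_p^\times$ read off from $(\sigma,\lambda)$, and that every non-supersingular irreducible arises in this way. The genuinely $SL_2$-flavoured point is now the reducibility of $\mathrm{Ind}_{B'(F)}^{SL_2(F)}\chi$: by Iwasawa $SL_2(F)=B'(F)K$ its restriction to $K$ is the finite-group induction $\mathrm{Ind}_{B'(k_F)}^{SL_2(k_F)}\bar\chi$, and since the Weyl element identifies $\mathrm{Ind}_{B'}\chi$ with $\mathrm{Ind}_{B'}\chi^{-1}$, the principal series is irreducible exactly when $\chi^2\neq 1$; for $\chi=1$ it contributes the trivial character and the Steinberg representation, and for $\chi$ of order $2$ a sum of two constituents, which together with the Steinberg make up the special series. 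This gives the partition (1).

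For (2): the trivial character is the only finite-dimensional member, as $SL_2(F)$ carries no nontrivial $\FFF_p$-character (it is its own derived subgroup), so it is isomorphic to no other family. A supersingular representation is a quotient of some $\pi_{\sigma',0}$, and one checks that $T$ then acts by $0$ on $\mathrm{Hom}_K(\sigma,\pi)$ for every weight $\sigma$ occurring in $\pi$, whereas a subquotient of a principal series has $T$ acting invertibly there; hence no supersingular is a principal or special series. Finally a principal series and a special series are separated by their restriction to $K$, computable from the Iwasawa induction above — the special series lacking certain weights that the corresponding irreducible principal series contains — or, equivalently, by the fact that a special series is a proper subquotient of the reducible $\pi_{\sigma,\lambda}$ from which it comes while an irreducible principal series equals its own $\pi_{\sigma,\lambda}$, so a comparison of Jordan--H\"older constituents concludes. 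The steps I expect to be most delicate are the computation of $\mathcal H(\sigma)$ and the identification of $\pi_{\sigma,\lambda}$ ($\lambda\neq 0$) with parabolic induction — the $SL_2$ analogues of the subtle arguments of \cite{BL1} — together with the bookkeeping around the two conjugacy classes of maximal compact subgroups.
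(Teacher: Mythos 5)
Your route is genuinely different from the paper's: you propose to redo Barthel--Livn\'e directly for $SL_{2}(F)$ (poids, alg\`ebre de Hecke $\FFF_{p}[T]$, conoyaux $\pi_{\sigma,\lambda}$), whereas the paper deduces everything from the $GL_{2}(F)$ classification by restriction -- Proposition \ref{restri} (this is where the hypothesis sur la caract\'eristique de $F$ enters, via the finiteness of the index of $G_{S}Z$ in $G$, i.e.\ of $[F^{\times}:(F^{\times})^{2}]$), combined with \cite[Theorem 33]{BL1}, Th\'eor\`eme \ref{dejairred} and Corollaire \ref{classnonssg}. A correct direct argument would even have the merit of dispensing with the hypothesis on the characteristic; but as written your proof contains a genuine error at the step which determines the families themselves.

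You claim that $\mathrm{Ind}_{B_{S}}^{G_{S}}\chi$ is reducible exactly when $\chi^{2}=1$, because ``the Weyl element identifies $\mathrm{Ind}\,\chi$ with $\mathrm{Ind}\,\chi^{-1}$'', and you assemble the special series from the two constituents at quadratic $\chi$ together with Steinberg. This is the complex-coefficient picture and it fails over $\FFF_{p}$: Th\'eor\`eme \ref{sppal} shows that $\mathrm{Ind}_{B_{S}}^{G_{S}}(\eta)$ is irreducible for \emph{every} nontrivial smooth $\eta$, quadratic characters included (Annexe A treats the unramified quadratic case $\Lambda=-1$ explicitly, and the Jacquet-module proof of Annexe B uses only $\eta\neq\mathbf{1}$). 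Mod $p$ there is no intertwining operator towards $\mathrm{Ind}\,\chi^{-1}$: the usual integral has no $\FFF_{p}$-valued analogue, precisely because $F$ carries no $\FFF_{p}$-valued Haar measure -- the very fact the paper uses to kill the Jacquet module of the big $B_{S}$-constituent. So the only reducible induced representation is $\mathrm{Ind}_{B_{S}}^{G_{S}}(\mathbf{1})$, the special family reduces to $Sp_{S}$ (Th\'eor\`emes \ref{irredsp} and \ref{seulquo}), and the quadratic principal series belong to family (b); your partition (1) is wrong as stated. A secondary slip: by Iwasawa, $\mathrm{Ind}_{B_{S}}^{G_{S}}(\chi)\vert_{K_{S}}$ is the smooth induction from $B_{S}\cap K_{S}$, i.e.\ functions on the \emph{infinite} set $\PP^{1}(F)$, not the $(q+1)$-dimensional induction $\mathrm{Ind}_{\overline{B}}^{SL_{2}(k_{F})}\overline{\chi}$, so the proposed $K_{S}$-weight comparison separating principal from special does not work as stated. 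The workable part of your last paragraph is the Hecke-eigenvalue separation of supersingulars from the rest, which is exactly how the paper proceeds (the operator $\tau_{\vec{r}}$ acts like $T_{\vec{r}}^{2}\neq 0$ on principal and special series, Corollaire \ref{classnonssg}); for the non-supersingular families among themselves the paper instead invokes \cite[Corollary 36]{BL1} through Th\'eor\`eme \ref{dejairred}.
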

\noindent L'hypoth\`ese sur la caract\'eristique de $F$ est une hypoth\`ese technique (cf. Section \ref{restriction}). Nous pensons cependant que le Th\'eor\`eme \ref{nonssg} est valable en caract\'eristique quelconque.\\

\noindent Nous prouvons au passage le r\'esultat suivant, valable en toute caract\'eristique, qui permet  de comparer les repr\'esentations non supersinguli\`eres de $SL_{2}(F)$ \`a celles de $GL_{2}(F)$.\begin{thm}
\begin{enumerate}
\item Si $\eta : F^{\times} \to \FFF_{p}^{\times}$ d\'efinit une repr\'esentation en s\'eries principales de $G$, alors on a un isomorphisme naturel de $G_{S}$-repr\'esentations :
$$ Ind_{B}^{G}(\eta \otimes \mathbf{1}) \simeq Ind_{B_{S}}^{G_{S}}(\eta) \ .$$
\item On a un isomorphisme naturel de $G_{S}$-repr\'esentations : 
$$ Sp \simeq Sp_{S} \ . $$
\item La restriction \`a $G_{S}$ de la repr\'esentation $Ind_{B}^{G}(\mathbf{1})$ est naturellement isomorphe \`a $Ind_{B_{S}}^{G_{S}}(\mathbf{1})$.
\end{enumerate}
\label{dejairred}
\end{thm}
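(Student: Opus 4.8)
The three assertions are facets of one and the same computation --- the restriction to $G_{S}$ of a representation induced from $B$ --- so I would prove them together by a Mackey-type argument. The starting point is the decomposition $G = B\,G_{S}$ with $B \cap G_{S} = B_{S}$: given $g \in G$, set $b$ to be the diagonal matrix with entries $\det g$ and $1$; then $b \in B$ and $b^{-1}g \in G_{S}$, so that $g = b\,(b^{-1}g)$, while $B \cap G_{S} = B_{S}$ by definition of $B_{S}$. In particular the inclusion $G_{S} \hookrightarrow G$ induces a bijection $B_{S}\backslash G_{S} \xrightarrow{\sim} B\backslash G$, both spaces being homeomorphic to the compact space $\PP^{1}(F)$.

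From this I would deduce the following general fact, which already yields parts (1) and (3): for every smooth character $\sigma$ of $B$, restriction of functions $f \mapsto \left.f\right|_{G_{S}}$ defines an isomorphism of $G_{S}$-representations
$$ \left.Ind_{B}^{G}(\sigma)\right|_{G_{S}} \;\xrightarrow{\sim}\; Ind_{B_{S}}^{G_{S}}(\sigma|_{B_{S}}) \ , $$
natural in $\sigma$. It is manifestly $G_{S}$-equivariant and lands in the target, the transformation law under $B_{S} = B \cap G_{S}$ being inherited from the one under $B$; it is injective because $G = B\,G_{S}$ forces $f$ to be determined by $\left.f\right|_{G_{S}}$; and it is surjective because any $f'$ in the target extends to $G$ by $f(bs) := \sigma(b)\,f'(s)$, which is well defined --- if $bs = b's'$ then $(b')^{-1}b = s'\,s^{-1} \in B \cap G_{S} = B_{S}$ and $f'$ transforms correctly under $B_{S}$ --- and smooth because $f'$ is. For (1) one takes $\sigma = \eta \otimes \mathbf{1}$, i.e. the character sending an upper triangular matrix with diagonal entries $(a,d)$ to $\eta(a)$; its restriction to $B_{S}$, whose elements have diagonal entries $(a,a^{-1})$, is $a \mapsto \eta(a)$, which is precisely the character $\eta$ of $B_{S}$. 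Part (3) is the case $\eta = \mathbf{1}$.

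For (2), recall the short exact sequences $0 \to \mathbf{1}_{G} \to Ind_{B}^{G}(\mathbf{1}) \to Sp \to 0$ and $0 \to \mathbf{1}_{G_{S}} \to Ind_{B_{S}}^{G_{S}}(\mathbf{1}) \to Sp_{S} \to 0$, in which $\mathbf{1}_{G}$ (resp. $\mathbf{1}_{G_{S}}$) is realised as the line of constant functions and $Sp$ (resp. $Sp_{S}$) as the cokernel. Restriction to $G_{S}$ is exact, and under the isomorphism of (3) the line of constant functions of $Ind_{B}^{G}(\mathbf{1})$ is carried onto the line of constant functions of $Ind_{B_{S}}^{G_{S}}(\mathbf{1})$; hence the isomorphism is compatible with the two sequences and induces the desired isomorphism $\left.Sp\right|_{G_{S}} \simeq Sp_{S}$ on cokernels.

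I do not anticipate a serious obstacle here: what is used is the Mackey restriction formula for the unique double coset $B\backslash G/G_{S}$, which holds verbatim for smooth $\FFF_{p}$-representations since $B\backslash G$ is compact. The only points deserving care are the verification that the extension $f(bs) = \sigma(b)f'(s)$ in the surjectivity step is well defined and smooth, the check that $(\eta \otimes \mathbf{1})|_{B_{S}}$ really is the character $\eta$ --- a matter of how one parametrises the characters of $B_{S}$ by those of $F^{\times}$ --- and, in (2), the identification of the trivial subrepresentation on each side with the line of constant functions, which is what makes the isomorphism respect the two defining exact sequences.
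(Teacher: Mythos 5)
Your proof is correct, but it takes a genuinely different route from the paper's. The paper's Proposition \ref{propcleG} only establishes that restriction of functions gives a $G_{S}$-equivariant \emph{injection} $Ind_{B}^{G}(\eta \otimes \mathbf{1}) \hookrightarrow Ind_{B_{S}}^{G_{S}}(\eta)$ (injectivity via $G = BG_{S}$, exactly as you argue); it then obtains (1) by invoking the irreducibility of the target $Ind_{B_{S}}^{G_{S}}(\eta)$ (Th\'eor\`eme \ref{sppal}), obtains (2) by passing to the quotients and using the irreducibility of $Sp_{S}$ (Th\'eor\`eme \ref{irredsp}) together with the nonvanishing of $Sp\vert_{G_{S}}$, and deduces (3) from (2) by the five lemma. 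You instead prove surjectivity of the restriction map directly: any $f'$ in the target extends to $G$ by $f(bs) := \sigma(b)f'(s)$, well defined because $B \cap G_{S} = B_{S}$, and smooth because one can use the continuous section $g \mapsto \mathrm{diag}(\det g, 1)$ of $G = BG_{S}$ so that $f(g) = \sigma(\mathrm{diag}(\det g,1))\,f'(\mathrm{diag}(\det g,1)^{-1}g)$ is visibly locally constant. This gives the isomorphism uniformly for every smooth character of $B$, with (3) as the case $\eta = \mathbf{1}$ and (1) as the case $\eta$ nontrivial (your identification $(\eta\otimes\mathbf{1})\vert_{B_{S}} = \eta$ is consistent with the paper's normalisation in Lemme \ref{caracbs}); (2) then follows formally, since restriction to $G_{S}$ is exact and your isomorphism matches the lines of constant functions on both sides. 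What each approach buys: the paper's argument is very short given that the $G_{S}$-irreducibility results are already in hand, whereas yours is more elementary and logically independent of Th\'eor\`emes \ref{sppal} and \ref{irredsp} (and of the five-lemma step), treats all characters at once without distinguishing the trivial case, and generalises to any situation where the parabolic acts with a single orbit on $G/G_{S}$, i.e. where there is one double coset $B\backslash G / G_{S}$.
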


Le Th\'eor\`eme \ref{nonssg} permet aussi d'obtenir facilement l'\'equivalence entre les notions de supercuspidalit\'e et de supersingularit\'e pour les repr\'esentations lisses irr\'eductibles de $SL_{2}(F)$.
\begin{cor}
Une repr\'esentation lisse irr\'eductible de $SL_{2}(F)$ sur $\FFF_{p}$ est supersinguli\`ere si et seulement si elle est supercuspidale.
\label{ssgscp}
\end{cor}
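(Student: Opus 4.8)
Rappelons d'abord que $SL_{2}(F)$ ne poss\`ede, \`a conjugaison pr\`es, qu'un seul sous-groupe parabolique propre, le Borel $B_{S}$, lequel n'admet pas de facteur de Levi interm\'ediaire : une repr\'esentation lisse irr\'eductible $\pi$ de $G_{S}$ est donc supercuspidale si et seulement si elle n'est sous-quotient d'aucune induite $Ind_{B_{S}}^{G_{S}}(\chi)$, o\`u $\chi$ parcourt les caract\`eres lisses de $F^{\times}$. Notons de plus que $\pi$ poss\`ede automatiquement un caract\`ere central --- le centre de $SL_{2}(F)$ est fini et son action sur $\pi$, admissible, est scalaire d'apr\`es le lemme de Schur --- de sorte que le Th\'eor\`eme~\ref{nonssg} lui est applicable. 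Toute la d\'emonstration consiste alors \`a confronter la partition du Th\'eor\`eme~\ref{nonssg} \`a cette reformulation de la supercuspidalit\'e.

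Dans un sens, les trois premi\`eres familles ne contiennent que des repr\'esentations non supercuspidales. En effet, une s\'erie principale \emph{est} une induite $Ind_{B_{S}}^{G_{S}}(\eta)$, donc en est en particulier un sous-quotient ; le caract\`ere trivial se plonge dans $Ind_{B_{S}}^{G_{S}}(\mathbf{1})$ comme sous-espace des fonctions constantes, et les s\'eries sp\'eciales en sont, par construction (cf. Th\'eor\`eme~\ref{dejairred}), des quotients ; dans tous les cas on obtient un sous-quotient d'une induite parabolique.

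Dans l'autre sens, il reste \`a voir qu'aucune repr\'esentation supersinguli\`ere n'est sous-quotient d'une induite $Ind_{B_{S}}^{G_{S}}(\chi)$. Une telle induite est de longueur finie, et chacun de ses facteurs de Jordan--H\"older est une repr\'esentation lisse irr\'eductible de m\^eme caract\`ere central que $Ind_{B_{S}}^{G_{S}}(\chi)$, donc relevant du Th\'eor\`eme~\ref{nonssg}(1) : il suffit d'exclure, pour chacun, le cas supersingulier. Or l'op\'erateur de Hecke pertinent agit sur tout poids intervenant dans $Ind_{B_{S}}^{G_{S}}(\chi)$ par un scalaire non nul (essentiellement $\chi(\varpi)^{\pm 1} \in \FFF_{p}^{\times}$) ; par l'analyse \`a la Barthel--Livn\'e men\'ee au cours de la preuve du Th\'eor\`eme~\ref{nonssg}, ceci force chaque facteur \`a ne pas \^etre supersingulier. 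En combinant les deux sens et le Th\'eor\`eme~\ref{nonssg}(2), on aboutit aux \'equivalences
$$ \pi \text{ supercuspidale} \iff \pi \text{ n'est sous-quotient d'aucune induite } Ind_{B_{S}}^{G_{S}}(\chi) \iff \pi \text{ supersinguli\`ere} \ . $$

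L'unique point qui ne soit pas purement formel est le calcul de valeur propre de Hecke ci-dessus, c'est-\`a-dire le fait qu'aucune induite $Ind_{B_{S}}^{G_{S}}(\chi)$ n'ait de sous-quotient supersingulier : c'est exactement l\`a que se mat\'erialise l'\'ecart entre la supersingularit\'e (d\'efinie par l'annulation d'une valeur propre de Hecke) et la supercuspidalit\'e (d\'efinie via l'induction parabolique). Ce point \'etant d\'ej\`a \'etabli au fil de la d\'emonstration du Th\'eor\`eme~\ref{nonssg}, le corollaire en d\'ecoule sans travail suppl\'ementaire.
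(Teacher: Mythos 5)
Your proposal is correct and takes essentially the same route as the paper: the corollary is extracted from Th\'eor\`eme \ref{nonssg} together with the identification of the subquotients of the inductions $Ind_{B_{S}}^{G_{S}}(\chi)$ (irreducible principal series for $\chi$ non trivial, the trivial character and $Sp_{S}$ for $\chi$ trivial), none of which is supersingular because the Hecke operator $\tau_{\vec{r}}$ acts by a nonzero scalar (Corollaire \ref{classnonssg}). The only cosmetic differences are that you re-derive the non-supersingularity of the Jordan--H\"older factors via the Hecke eigenvalue where the paper simply invokes the disjointness already contained in Th\'eor\`eme \ref{nonssg}, and that your appeal to admissibility and Schur for the central character is unnecessary (the finiteness of the centre of $SL_{2}(F)$ suffices).
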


Lorsque $F = \QQ_{p}$, on utilise les r\'esultats de Breuil \cite{Br1} pour construire explicitement des repr\'esentations supersinguli\`eres $\pi_{r, \infty}$ et $\pi_{r, 0}$ de $SL_{2}(\QQ_{p})$ (avec $0 \leq r \leq p-1$) qui permettent de prouver le th\'eor\`eme suivant.
\begin{thm}
On suppose que $F = \QQ_{p}$.
\begin{enumerate} 
\item Les repr\'esentations supersinguli\`eres de $SL_{2}(\QQ_{p})$ sont exactement (\`a isomorphisme pr\`es) les repr\'esentations $\pi_{r, \infty}$ et $\pi_{r, 0}$ avec $r \in \{0, \ldots, p-1\}$. 
\item Les repr\'esentations $\pi_{r, \infty}$ sont deux \`a deux non isomorphes pour $r$ parcourant $\{0, \ldots, p-1\}$. De m\^eme, les repr\'esentations $\pi_{r,0}$ sont deux \`a deux non isomorphes pour $r$ parcourant $\{0, \ldots, p-1\}$.
\item Les repr\'esentations $\pi_{r, 0}$ et $\pi_{s, \infty}$ sont isomorphes si et seulement si $s + r = p-1$.
\item Toute repr\'esentation supersinguli\`ere de $GL_{2}(\QQ_{p})$ restreinte \`a $SL_{2}(\QQ_{p})$ se d\'ecompose en somme directe de deux repr\'esentations supersinguli\`eres de $SL_{2}(\QQ_{p})$.
\end{enumerate}
\label{ssgqp}
\end{thm}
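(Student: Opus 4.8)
On suppose $p$ impair, le cas $p = 2$ se traitant de mani\`ere analogue. Posons $G = GL_{2}(\QQ_{p})$, $G_{S} = SL_{2}(\QQ_{p})$, $K = GL_{2}(\ZZ_{p})$, $K_{S} = SL_{2}(\ZZ_{p})$, notons $Z$ le centre de $G$, $B$ et $B_{S}$ les sous-groupes de Borel sup\'erieurs, $I_{1}$ le pro-$p$-Iwahori de $G$ et $I_{1}^{S} = I_{1} \cap G_{S}$. Comme $\det$ est trivial sur $G_{S}$ et que $\pi(r, 0, \chi) \simeq \pi(r, 0, 1) \otimes (\chi \circ \det)$ (\cite{Br1}), la restriction $\pi(r, 0, \chi)|_{G_{S}}$ ne d\'epend que de $r$ : notons-la $\Pi_{r}$. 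Toute la preuve se ram\`ene \`a l'\'etude des $\Pi_{r}$, selon le plan : (i) toute repr\'esentation supersinguli\`ere de $G_{S}$ est un constituant irr\'eductible de l'un des $\Pi_{r}$, d'o\`u l'exhaustivit\'e dans (1) ; (ii) $\dim \Pi_{r}^{I_{1}^{S}} = 2$ et $\Pi_{r}$ n'est pas irr\'eductible, donc $\Pi_{r}$ est somme directe de deux repr\'esentations supersinguli\`eres de $G_{S}$ compt\'ees avec multiplicit\'e : c'est (4) ; (iii) on distingue ces constituants par leur socle comme $K_{S}$-repr\'esentations, d'o\`u (1), (2) et (3).

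\emph{Point (i).} Soit $\rho$ une repr\'esentation lisse irr\'eductible supersinguli\`ere de $G_{S}$ sur $\FFF_{p}$, donc supercuspidale (Corollaire \ref{ssgscp}). Son caract\`ere central sur $G_{S} \cap Z = \{\pm 1\}$ se prolonge \`a $Z \simeq \QQ_{p}^{\times}$ (c'est possible puisque $-1 \neq 1$ dans $\FFF_{p}^{\times}$), d'o\`u un prolongement $\tilde\rho$ de $\rho$ \`a $G_{S}Z$, sous-groupe d'indice $|\QQ_{p}^{\times}/(\QQ_{p}^{\times})^{2}| = 4$ dans $G$. On choisirait un sous-quotient irr\'eductible $\pi$ de $Ind_{G_{S}Z}^{G} \tilde\rho$ dont la restriction \`a $G_{S}$ contienne $\rho$, ce qui est loisible car $(Ind_{G_{S}Z}^{G}\tilde\rho)|_{G_{S}}$ est semi-simple (th\'eorie de Clifford) et contient $\rho$, tandis que la restriction \`a $G_{S}$ est exacte. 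Il resterait \`a voir que $\pi$ est supercuspidale : sinon, $\pi$ serait sous-quotient d'une induite parabolique $Ind_{B}^{G}\theta$ ; comme $G = G_{S} \cdot T$ o\`u $T$ est le tore diagonal, donc $G = B \cdot G_{S}$, la formule de Mackey donnerait $(Ind_{B}^{G}\theta)|_{G_{S}} \simeq Ind_{B_{S}}^{G_{S}}(\theta|_{B_{S}})$, si bien que $\rho$ serait sous-quotient d'une s\'erie principale de $G_{S}$, contredisant sa supercuspidalit\'e. Donc $\pi$ est supercuspidale, donc supersinguli\`ere (\cite{BL1, Br1}), donc isomorphe \`a un $\pi(r, 0, \chi)$, et $\rho$ est un constituant de $\Pi_{r}$.

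\emph{Point (ii) et identification.} La repr\'esentation $\Pi_{r}$ est semi-simple (th\'eorie de Clifford). On a $I_{1} = I_{1}^{S} \cdot (I_{1} \cap Z)$ avec $I_{1} \cap Z = 1 + p\ZZ_{p}$ (scalaires), qui agit trivialement sur $\pi(r, 0, 1)$ puisque le caract\`ere central est \`a valeurs dans $\FFF_{p}^{\times}$, d'ordre premier \`a $p$ ; donc $\Pi_{r}^{I_{1}^{S}} = \pi(r, 0, 1)^{I_{1}}$, qui est de dimension $2$ (\cite{BL1, Br1}). Chaque constituant de $\Pi_{r}$ ayant un espace d'invariants non nul sous le pro-$p$-groupe $I_{1}^{S}$, $\Pi_{r}$ a au plus deux constituants compt\'es avec multiplicit\'e. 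Par ailleurs on a $\pi(r, 0, 1) \simeq \pi(r, 0, 1) \otimes (\eta \circ \det)$, o\`u $\eta$ est le caract\`ere quadratique non ramifi\'e de $\QQ_{p}^{\times}$, ce qui se lit sur le mod\`ele explicite de Breuil (ou, via la correspondance semi-simple de \cite{Br1}, sur le param\`etre galoisien, qui est induit depuis l'extension quadratique non ramifi\'ee). Comme $\eta \circ \det$ est un caract\`ere non trivial de $G$ trivial sur $G_{S}$, $\Pi_{r}$ n'est pas irr\'eductible ; ayant au plus deux constituants compt\'es avec multiplicit\'e, elle en a donc exactement deux : c'est le point (4). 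Pour les identifier, on partirait de $\mathrm{soc}_{K}\, \pi(r, 0, 1) = \sigma_{r} \oplus (\sigma_{p-1-r} \otimes {\det}^{r})$ (\cite{Br1}), qui donne $\mathrm{soc}_{K_{S}}\Pi_{r} \supseteq \sigma_{r} \oplus \sigma_{p-1-r}$ par restriction \`a $K_{S}$ ; joint \`a $\dim \Pi_{r}^{I_{1}^{S}} = 2$, ceci force l'\'egalit\'e $\mathrm{soc}_{K_{S}}\Pi_{r} = \sigma_{r} \oplus \sigma_{p-1-r}$. Pour $r \neq (p-1)/2$, $\Pi_{r}$ est alors la somme directe des deux repr\'esentations supersinguli\`eres dont le socle sous $K_{S}$ vaut respectivement $\sigma_{r}$ et $\sigma_{p-1-r}$, que la construction note $\pi_{r, 0}$ et $\pi_{r, \infty}$ ; enfin l'isomorphisme $\pi(p-1-r, 0, \chi') \simeq \pi(r, 0, 1) \otimes (\nu \circ \det)$ pour un caract\`ere convenable $\nu$ (\`a v\'erifier sur les socles ou sur les param\`etres galoisiens) entra\^ine $\Pi_{p-1-r} = \Pi_{r}$, donc $\pi_{r, \infty} \simeq \pi_{p-1-r, 0}$.

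\emph{Points (2) et (3).} Les deux constituants de $\Pi_{r}$ se partageant les deux dimensions de $\Pi_{r}^{I_{1}^{S}}$, on a $\dim \pi_{r, 0}^{I_{1}^{S}} = 1$ ; or une repr\'esentation lisse irr\'eductible de $G_{S}$ dont l'espace des $I_{1}^{S}$-invariants est de dimension $1$ a un socle sous $K_{S}$ r\'eduit \`a un unique $\sigma_{a}$, de multiplicit\'e un, car $\dim \sigma_{a}^{I_{1}^{S}} = 1$ pour tout $a \in \{0, \ldots, p-1\}$. Ici $\mathrm{soc}_{K_{S}} \pi_{r, 0} = \sigma_{r}$, ce qui d\'etermine $r$ ; donc $\pi_{r, 0} \simeq \pi_{r', 0}$ entra\^ine $r = r'$, et de m\^eme pour les $\pi_{r, \infty}$ via $\pi_{s, \infty} \simeq \pi_{p-1-s, 0}$, d'o\`u (2) ; et $\pi_{r, 0} \simeq \pi_{s, \infty} \simeq \pi_{p-1-s, 0}$ \'equivaut \`a $r = p-1-s$, c'est-\`a-dire $r + s = p-1$, d'o\`u (3). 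Le point d\'elicat sera (4) : il faudra \'etablir avec soin l'isomorphisme $\pi(r, 0, 1) \simeq \pi(r, 0, 1) \otimes (\eta \circ \det)$ ainsi que les calculs de $\Pi_{r}^{I_{1}^{S}}$ et du socle sous $K_{S}$ \`a partir de \cite{Br1}, et traiter \`a part le cas $r = (p-1)/2$, o\`u les deux constituants sont isomorphes et o\`u l'on doit s'assurer que $\Pi_{(p-1)/2} \simeq \pi_{(p-1)/2, 0}^{\oplus 2}$ (multiplicit\'e deux), et non que $\Pi_{(p-1)/2}$ est irr\'eductible.
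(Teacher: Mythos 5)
Your route is genuinely different from the paper's and, for $p$ odd, it can be made to work. The paper never invokes Clifford theory: it constructs $\pi_{r,\infty}$ and $\pi_{r,0}$ as the $G_{S}$-subrepresentations generated by the two $I(1)$-invariant vectors $\overline{[I_{2},x^{r}]}$ and $\overline{[\beta,x^{r}]}$, proves $\pi_{r,\infty}\cap\pi_{r,0}=\{0\}$ by a support argument on the Bruhat--Tits tree (the operator $T_{r}$ exchanges even and odd vertices), deduces $\dim \pi_{r,z}^{I_{S}(1)}=1$ and hence irreducibility, obtains $\pi(r,0,1)\vert_{G_{S}}=\pi_{r,\infty}\oplus\pi_{r,0}$ by an explicit computation on determinant classes, separates the constituents by the character of $I_{S}$ on the invariant line ($\omega_{1}^{r}$ versus $\omega_{1}^{-r}$), settles the residual case $r=0$ by comparing $K_{S}$-invariants, and gets $\pi_{r,\infty}\simeq\pi_{p-1-r,0}$ from Breuil's $\pi(r,0,1)\simeq\pi(p-1-r,0,\omega^{r})$. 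You replace all of this by semisimplicity of $\Pi_{r}:=\pi(r,0,1)\vert_{G_{S}}$, the self-twist $\pi(r,0,1)\simeq\pi(r,0,1)\otimes(\mu_{-1}\circ\det)$ to force reducibility, the count $\dim\Pi_{r}^{I_{S}(1)}=2$, and Breuil's $K$-socle to tell the constituents apart. This buys you a proof with no tree lemma, no explicit surjectivity computation, and a uniform treatment of the non-isomorphisms (the socle separates even $r=0$, where the paper needs a separate $K_{S}$-invariant computation); the price is that you must actually prove the semisimplicity of $\Pi_{r}$ (Clifford's argument needs an irreducible $G_{S}Z$-submodule, which in this smooth mod-$p$ setting requires an admissibility argument) and that the two constituents are supersingular for $G_{S}$, which you assert without argument (cite Corollaire \ref{respessg1}, or redo the elimination the paper carries out).

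Two concrete problems remain. First, your claim that the case $p=2$ is analogous is wrong: over $\FFF_{2}$ the unramified quadratic character $\mu_{-1}$ is trivial, indeed every character of $\QQ_{2}^{\times}/(\QQ_{2}^{\times})^{2}$ with values in $\FFF_{2}^{\times}$ is trivial, and $[G:G_{S}Z]=8$ is a power of $p$; so the self-twist argument cannot show that $\Pi_{r}$ is reducible, and moreover $I(1)\neq I_{S}(1)\cdot(I(1)\cap Z)$, so your computation of $\Pi_{r}^{I_{S}(1)}$ also breaks. The decomposition does hold for $p=2$, but only by the paper's direct method (or by Breuil's computation of invariants under the two unipotent subgroups). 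Second, your socle labels are swapped relative to the paper's construction: $\pi_{r,\infty}$, generated by $\overline{[I_{2},x^{r}]}$, has $K_{S}$-socle $\sigma_{r}$ (the Iwahori acts on its invariant line by $\omega_{1}^{r}$), while $\pi_{r,0}$, generated by $\overline{[\beta,x^{r}]}$, has socle $\sigma_{p-1-r}$. Since assertions (1)--(4) are insensitive to exchanging the labels $0\leftrightarrow\infty$, this does not invalidate your conclusions, but it must be corrected if your constituents are to coincide with the representations named in the statement.
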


\subsection*{Plan de l'article}
Dans la Section \ref{rcl}, nous rappelons tout d'abord quelques r\'esultats et notations apparaissant dans l'\'etude men\'ee dans \cite{BL1, BL2, Br1} pour $GL_{2}(F)$, puis nous donnons les notations et les r\'esultats correspondants pour $SL_{2}(F)$.\\
Dans la Section \ref{restriction}, nous d\'emontrons un r\'esultat analogue \`a  \cite[Proposition 2.2]{Hen} lorsque $F$ est de caract\'eristique diff\'erente de $2$. Il permet notamment de justifier pourquoi l'\'etude des repr\'esentations supersinguli\`eres effectu\'ee dans la Section \ref{cassg} se ram\`ene \`a l'\'etude des restrictions \`a $SL_{2}(\QQ_{p})$ des repr\'esentations supersinguli\`eres de $GL_{2}(\QQ_{p})$.\\
La Section \ref{classifnssg} sera consacr\'ee \`a la preuve du Th\'eor\`eme \ref{dejairred} puis du Th\'eor\`eme \ref{nonssg}, tandis que le Th\'eor\`eme \ref{ssgqp} sera d\'emontr\'e dans la Section \ref{cassg}, o\`u nous dirons quelques mots sur ce qui advient de la correspondance de Langlands modulo $p$ obtenue par Breuil dans le cas de $GL_{2}(\QQ_{p})$.

\subsection*{Quelques remarques}

\begin{NB}
Signalons une fois pour toutes que la torsion d'une repr\'esentation lisse de $GL_{2}(F)$ par un $\FFF_{p}$-caract\`ere lisse de $GL_{2}(F)$ (qui est n\'ecessairement de la forme $\chi \circ \det$ avec $\chi : \QQ_{p}^{\times} \to \FFF_{p}^{\times}$ caract\`ere lisse) ne modifie clairement pas la repr\'esentation de $SL_{2}(F)$ obtenue par restriction.
\end{NB}

\begin{NB} Mentionnons ici que, durant la r\'edaction de ce papier, C. Cheng a d\'emontr\'e de mani\`ere ind\'ependante une partie des r\'esultats que nous prouvons ici \cite{CC}. Nous le remercions de nous avoir communiqu\'e ses notes.\\
\end{NB}

\noindent Nous tenons aussi \`a remercier G. Henniart, sous la direction de qui ce travail a \'et\'e effectu\'e, pour son int\'er\^et et ses remarques pertinentes sur une version pr\'eliminaire de cet article.

\section{Rappels et compl\'ements}
\label{rcl}
\subsection{Notations g\'en\'erales}
On fixe un nombre premier $p$ ainsi qu'un corps local complet $F$ de caract\'eristique r\'esiduelle $p$ et de corps r\'esiduel fini. On d\'esigne par $\mathcal{O}_{F}$ l'anneau des entiers de $F$, on en fixe une fois pour toutes une uniformisante $\varpi_{F} \in \mathcal{O}_{F}$ et on note $k_{F} = \mathcal{O}_{F}/(\varpi_{F})$ le corps r\'esiduel de $F$ : c'est une extension finie (de degr\'e not\'e $f$) du corps  \`a $p$ \'el\'ements $\FF_{p}$, dont on fixe une cl\^oture alg\'ebrique $\FFF_{p}$ contenant $k_{F}$. On fixe aussi un plongement $\iota : k_{F} \hookrightarrow \FFF_{p}$ et on note $v_{F}$ la valuation $\varpi_{F}$-adique de $F$ que l'on a normalis\'ee par $v_{F}(\varpi_{F}) = 1$.\\

Sauf mention contraire explicite, toutes les repr\'esentations consid\'er\'ees dans la suite seront \`a coefficients dans $\FFF_{p}$.\\ 

\subsection{Rappels de th\'eorie des repr\'esentations} 
Soient $H$ un groupe topologique et $\pi$ une repr\'esentation de $H$ sur un $\FFF_{p}$-espace vectoriel $V$.\\
On dit que $\pi$ est \emph{irr\'eductible} si $V \not= \{0\}$ et si les seuls sous-espaces vectoriels de $V$ stables sous l'action de $H$ sont $\{0\}$ et $V$.\\
On dit que $\pi$ est une repr\'esentation \emph{lisse} de $H$ si pour tout \'el\'ement $v$ de $V$, le sous-groupe Stab$_{H}$($v$) des \'el\'ements de $H$ fixant $v$ est un sous-groupe ouvert de $H$.\\
On dit que $\pi$ est une repr\'esentation \emph{admissible} lorsqu'elle est lisse et que, pour tout sous-groupe ouvert $K$ de $H$, l'espace $V^{K}$ des vecteurs de $V$ fixes sous l'action de $K$ est un $\FFF_{p}$-espace vectoriel de dimension finie.\\
On dit que $\pi$ est un \emph{caract\`ere modulo $p$ de $H$} lorsque $V$ est de dimension 1 sur $\FFF_{p}$.\\
Enfin, si $Z_{H}$ d\'esigne le centre de $H$, et si $\eta$ est un caract\`ere modulo $p$ de $Z_{H}$, on dit que $\pi$ \emph{admet pour caract\`ere central $\eta$} si l'action de $Z_{H}$ sur $V$ est donn\'ee par le caract\`ere $\eta$.\\

Le r\'esultat que nous rappelons maintenant  \cite[Lemma 3]{BL1} est un fait crucial de la th\'eorie des repr\'esentations modulo $p$ dont nous nous servirons constamment.
\begin{prop}
Soient $P$ un pro-$p$-groupe et $V$ une repr\'esentation lisse non nulle de $P$ sur $\overline{\mathbb{F}_{p}}$. Alors $V$ contient un vecteur fixe non trivial sous l'action de $P$.
\label{faitcrucial}
\end{prop}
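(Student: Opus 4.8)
The plan is to reduce the statement to the classical fixed-point lemma for a finite $p$-group acting linearly on a nonzero finite-dimensional vector space over a field of characteristic $p$. First I would pick any nonzero vector $v \in V$. Since $\pi$ is smooth, $\mathrm{Stab}_{P}(v)$ is open in $P$, and since a pro-$p$-group is profinite, hence compact, this open subgroup has finite index; therefore the orbit $P\cdot v$ is finite and the subspace $W \subseteq V$ it spans is a nonzero \emph{finite-dimensional} $P$-subrepresentation. The kernel $N$ of the action $P \to \mathrm{GL}(W)$ is the intersection of the stabilizers of a finite basis of $W$, hence open and of finite index, so $\overline P := P/N$ is a finite quotient of the pro-$p$-group $P$, i.e. a finite $p$-group, acting (faithfully) on $W$.

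Next I would descend to a finite field. The action of the finitely many elements of $\overline P$ on $W$ is given by finitely many matrices, whose entries all lie in a suitable finite subextension $\mathbb{F}_{q}$ of $\overline{\mathbb{F}_{p}}$; choosing the basis accordingly yields an $\mathbb{F}_{q}$-form $W_{0}$ of $W$, nonzero and stable under $\overline P$. As $\dim_{\mathbb{F}_{q}} W_{0} = \dim_{\overline{\mathbb{F}_{p}}} W \geq 1$ and $q$ is a power of $p$, the finite set $W_{0}$ has cardinality $q^{\dim W} \equiv 0 \pmod p$.

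Finally I would run the orbit-counting argument. The finite $p$-group $\overline P$ acts on the finite set $W_{0}$; each orbit has cardinality dividing $|\overline P|$, hence a power of $p$, and the orbits of cardinality $1$ are exactly the fixed points, which form an $\mathbb{F}_{q}$-subspace $W_{0}^{\overline P} \ni 0$. Reducing $|W_{0}| = \sum_{\mathrm{orbits}} |\mathrm{orbit}|$ modulo $p$ gives $|W_{0}^{\overline P}| \equiv |W_{0}| \equiv 0 \pmod p$, so $|W_{0}^{\overline P}| \geq p > 1$ and there is a nonzero $w \in W_{0}^{\overline P}$. Since the $P$-action on $W$ factors through $\overline P$, such a $w \in W \subseteq V$ is fixed by all of $P$, which is the desired vector.

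I do not expect a real obstacle: the only genuine content is the $p$-group fixed-point lemma, and the two reductions (to finite dimension, via smoothness and compactness; and to a finite coefficient field) are routine. If one prefers to avoid the descent to $\mathbb{F}_{q}$, the same conclusion follows by choosing a central element $z$ of order $p$ in $\overline P$, observing that $(z-1)^{p} = z^{p} - 1 = 0$ in $\mathrm{End}_{\overline{\mathbb{F}_{p}}}(W)$, so that $z-1$ is nilpotent and $W^{\langle z \rangle} \neq 0$, and then inducting on $|\overline P|$ via the action of $\overline P/\langle z \rangle$ on $W^{\langle z \rangle}$.
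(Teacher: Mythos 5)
Your proof is correct: the reduction to a finite $p$-group quotient via smoothness and compactness of $P$, followed by the fixed-point count over a finite subfield (or the nilpotence of $z-1$ for a central $z$ of order $p$), is exactly the standard argument. Note that the paper does not reprove this statement but simply cites \cite[Lemma 3]{BL1}, whose proof is essentially the one you give, so there is nothing to add.
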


Enfin, pour tout scalaire $\lambda \in \FFF_{p}^{\times}$, on note $\mu_{\lambda} : F^{\times} \to \FFF_{p}^{\times}$ le caract\`ere non ramifi\'e (i.e. trivial en restriction au groupe des unit\'es $\mathcal{O}_{F}^{\times}$ de $F$) envoyant $\varpi_{F}$ sur $\lambda$.

\subsection{El\'ements et sous-groupes remarquables de $GL_{2}(F)$}
Les notations introduites ici sont essentiellement celles utilis\'ees dans \cite{Br1} et \cite{Oll1}.\\ 
On consid\`ere le groupe $G:=GL_{2}(F)$, dont $K:=GL_{2}(\mathcal{O}_{F})$ est \`a conjugaison pr\`es l'unique sous-groupe compact maximal, et dont le centre est $Z := F^{\times}\*I_{2}$ avec $I_{2} := \left( \begin{array}{cc} 1 & 0 \\ 0 & 1\end{array}\right)$. On note $I \subset K$ le sous-groupe d'Iwahori standard et $I(1)$ son unique pro-$p$-sous-groupe de Sylow (que l'on appelle le pro-$p$-Iwahori de $K$). Ils correspondent respectivement \`a l'image r\'eciproque par l'application de r\'eduction modulo $\varpi_{F}$ du sous-groupe des matrices triangulaires sup\'erieures et du sous-groupe des matrices triangulaires sup\'erieures unipotentes (i.e. dont les coefficients diagonaux sont tous \'egaux \`a 1) de $GL_{2}(k_{F})$.\\
On note $B$ le sous-groupe de Borel de $G$ constitu\'e des matrices triangulaires sup\'erieures, $T$ le tore maximal d\'eploy\'e des matrices diagonales et $U$ le radical unipotent de $B$. Rappelons que $B$ admet alors la d\'ecomposition $B = TU$.\\

\noindent On d\'efinit les matrices suivantes :
$$ \alpha := \left( \begin{array}{cc} 1 & 0 \\ 0 & \varpi_{F} \end{array}\right) \ ; \  \beta := \left( \begin{array}{cc} 0 & 1 \\ \varpi_{F} & 0 \end{array}\right) \ ; \  \omega := \left( \begin{array}{cc} 0 & 1 \\ 1 & 0 \end{array}\right) \ .$$
Elles satisfont en particulier l'\'egalit\'e $\beta\omega = \alpha$.\\

\subsection{Poids de Serre de $GL_{2}(F)$}
\label{SerreGL}
On rappelle que l'on a fix\'e une fois pour toute une uniformisante $\varpi_{F}$ de $F$ car ce qui suit d\'epend du choix de cette uniformisante.\\
Pour tout entier $r \in \{0, \ldots , p-1\}$, on note $Sym^{r}(\FFF_{p}^{2})$ (ou $\sigma_{r}$) l'unique repr\'esentation de $KZ$ ayant pour espace sous-jacent $\displaystyle \bigoplus_{i = 0}^{r} \FFF_{p}x^{r-i}y^{i}$ sur lequel $\varpi_{F}I_{2}$ agit trivialement et sur lequel  $\left(\begin{array}{cc} a & b \\ c & d \end{array}\right) \in K$ agit par\footnote{Dans le membre de droite, les coefficients sont les r\'eductions modulo $\varpi_{F}$ des coefficients de la matrice de $K$.} : 
$$ \sigma_{r}(\left( \begin{array}{cc} a & b \\ c & d \end{array}\right))\*(x^{r-i}\*y^{i}) := (a\*x + c\*y)^{r-i}\*(b\*x + d\*y)^{i} \ .$$

\noindent Notons $q = p^{f}$ le nombre d'\'el\'ements de $k_{F}$. Pour tout $f$-uplet d'entiers $\vec{r}:=$($r_{0}, \ldots, r_{f-1})$ de $\{0,\ldots, p-1\}^{f}$, on d\'efinit $\sigma_{\vec{r}}$ comme \'etant l'unique repr\'esentation de $KZ$ ayant pour espace sous-jacent
$$ V_{\vec{r}} := Sym^{r_{0}}(\FFF_{p}^{2}) \displaystyle \otimes_{\FFF_{p}} Sym^{r_{1}}(\FFF_{p}^{2}) \otimes_{\FFF_{p}} \ldots \otimes_{\FFF_{p}} Sym^{r_{f-1}}(\FFF_{p}^{2})$$
sur lequel $\varpi_{F}I_{2}$ agit trivialement et sur lequel  $\left(\begin{array}{cc} a & b \\ c & d \end{array}\right) \in K$ agit par
$$\sigma_{\vec{r}}(\left(\begin{array}{cc} a & b \\ c & d \end{array}\right))(v_{0}\otimes \ldots v_{f-1}) := \displaystyle \bigotimes_{j = 0}^{f-1} \sigma_{r_{j}}(\left(\begin{array}{cc} a^{p^{j}} & b^{p^{j}} \\ c^{p^{j}} & d^{p^{j}} \end{array}\right))(v_{j})$$

A isomorphisme et torsion par un caract\`ere lisse pr\`es, les $\sigma_{\vec{r}}$ donnent toutes les repr\'esentations lisses irr\'eductibles de $KZ$ sur $\FFF_{p}$ \cite[Proposition 4]{BL1}. On appelle \emph{poids de Serre de $K$} une classe d'isomorphisme de telles repr\'esentations (ou, par abus, l'une des repr\'esentations $\sigma_{\vec{r}}$).

\subsection{Alg\`ebres de Hecke sph\'eriques de $GL_{2}(F)$}
\label{hecke}
Soient $H$ un sous-groupe ouvert, compact modulo $Z$ de $G$ et $\pi$ une repr\'esentation lisse de $H$. On leur associe une alg\`ebre de Hecke $\mathcal{H}_{G}(H,\sigma)$ d\'efinie par :
$$ \mathcal{H}_{G}(H, \sigma) := \text{End}_{G}(\text{c-ind}_{H}^{G}(\sigma)) \ .$$ 
Consid\'erons le cas o\`u $H = KZ$ et $\sigma = \sigma_{\vec{r}}$ est l'un des poids de Serre d\'efinis dans la Section \ref{SerreGL}. Barthel et Livn\'e \cite[Proposition 8]{BL1} ont alors isol\'e un op\'erateur $T_{\vec{r}} \in \mathcal{H}_{G}(KZ, \sigma_{\vec{r}})$ qui engendre $\mathcal{H}(KZ, \sigma_{\vec{r}})$ et fournit un isomorphisme de $\FFF_{p}$-alg\`ebres 
$$\mathcal{H}(KZ, \sigma_{\vec{r}}) \simeq \FFF_{p}[T_{\vec{r}}] \ .$$ 
Cet op\'erateur leur permet de d\'efinir les repr\'esentations conoyau suivantes : pour tout scalaire $\lambda \in \FFF_{p}$, on note $\pi(\vec{r}, \lambda, 1)$ le conoyau du morphisme $G$-\'equivariant 
$$ T_{\vec{r}} - \lambda : \text{c-ind}_{KZ}^{G}(\sigma_{\vec{r}}) \to  \text{c-ind}_{KZ}^{G}(\sigma_{\vec{r}})\ .$$
Ils d\'efinissent alors la notion de repr\'esentation supersinguli\`ere de $GL_{2}(F)$ comme suit \cite[page 290]{BL2} : une repr\'esentation lisse irr\'eductible $\pi$ de $GL_{2}(F)$ sur $\FFF_{p}$ est dite \emph{supersinguli\`ere} s'il existe un param\`etre $\vec{r}$ tel qu'\`a torsion par un caract\`ere lisse pr\`es, $\pi$ soit un quotient irr\'eductible de $\pi(\vec{r}, 0, 1)$.\\

D'autre part, on associe \`a tout couple $(g , v) \in G \times V_{\vec{r}}$ la fonction \'el\'ementaire $[g,v] \in ind_{K\*Z}^{G}(\sigma_{\vec{r}})$ d\'efinie par : 
$$ \forall \ h \in G, \  [g,v](h) := \left\{ \begin{array}{ll} 0 & \text{ si } h \not\in K\*Z\*g^{-1} \\ \sigma_{\vec{r}}(h\*g)(v) & \text{ si } h \in K\*Z\*g^{-1}\end{array} \right. \ .$$
On note $\overline{[g,v]}$ son image modulo $T_{\vec{r}}$ (i.e. dans $\pi(\vec{r},0,1)$). On a alors imm\'ediatement les deux propri\'et\'es suivantes : 
\begin{itemize}
\item[\tt $\mathit{i)}$] $\forall \ g$, $g_{1} \in G$, $\forall \ v \in \sigma_{\vec{r}}$, $g\*([g_{1},v]) = [g\*g_{1},v]$;
\item[\tt $\mathit{ii)}$] $ \forall \ g \in G$, $\forall \ k \in K\*Z$, $\forall \ v \in \sigma_{\vec{r}}$, $[g\*k, v] = [g, \sigma_{\vec{r}}(k)(v)]$. \\
\end{itemize}

\subsection{Donn\'ees immobili\`eres}
\label{immob}
On note $X$ l'arbre de Bruhat-Tits de $G$ (qui est celui de $SL_{2}$, cf. \cite{Ser} pour plus de d\'etails),  $x_{0}$ son << sommet standard >> (not\'e $v_{0}$ dans \cite{BL1}), qui correspond \`a la classe d'homoth\'etie du r\'eseau standard $\mathcal{O}_{F} \oplus \mathcal{O}_{F}$ du $F$-espace vectoriel $F^{2}$, et $x_{1} := \alpha\*x_{0}$.\\

L'arbre $X$ est naturellement muni d'une distance (la distance entre deux sommets est d\'efinie comme le nombre minimal d'ar\^etes constituant un chemin dans $X$ entre ces sommets) ainsi que d'une action transitive de $G$ pour laquelle $x_{0}$ admet $KZ$ comme stabilisateur. Ceci permet notamment d'identifier le support des fonctions $[g,v]$ aux sommets de $X$ (en identifiant le support de $[g, v]$ au sommet $gx_{0}$) et, de mani\`ere plus g\'en\'erale, le support de toute fonction $f \in \text{c-ind}_{K\*Z}^{G}(\sigma_{\vec{r}})$ \`a une famille finie de sommets de $X$.\\
On appellera \emph{cercle} (resp. \emph{boule}) \emph{de rayon $n$} l'ensemble des sommets de $X$ situ\'es \`a distance $n$ (resp. $\leq n$) du sommet $x_{0}$.\\

\subsection{Compl\'ements concernant $SL_{2}(F)$}
\subsubsection{Notations}
Nous nous int\'eressons dans ce papier au groupe sp\'ecial lin\'eaire $G_{S}:= SL_{2}(F)$ dont un sous-groupe compact maximal est donn\'e par $K_{S} := SL_{2}(\mathcal{O}_{F})$. Celui-ci contient le centre $Z_{S} := \{I_{2}, -I_{2}\}$ de $G_{S}$, qui est trivial lorsque $p = 2$. On note $I_{S} = I \cap K_{S}$ le sous-groupe d'Iwahori standard de $G_{S}$, $I_{S}(1) = I(1) \cap K_{S}$ son pro-$p$-Iwahori, $B_{S} = B \cap G_{S}$ le sous-groupe de Borel de $G_{S}$ constitu\'e des matrices triangulaires sup\'erieures, $T_{S}$ son tore maximal d\'eploy\'e (form\'e de l'ensemble des matrices diagonales), de sorte que l'on a de nouveau $B_{S} = T_{S} U$. On introduit aussi les matrices suivantes de $G_{S}$ : 
$$ s := \left( \begin{array}{cc} 0 & -1 \\ 1 & 0 \end{array}\right) \ ; \ \alpha_{0} := \left( \begin{array}{cc} \varpi_{F} & 0 \\ 0 & \varpi_{F}^{-1} \end{array}\right) \ ; \ \beta_{0} := \left( \begin{array}{cc} 0 & -\varpi_{F}^{-1} \\ \varpi_{F} & 0 \end{array}\right) \ .$$
En particulier, on a $\beta_{0} = s\alpha_{0}$ et $s = \alpha_{0}\beta_{0}$.\\

\subsubsection{Donn\'ees immobili\`eres}
Comme nous l'avons d\'ej\`a signal\'e, le groupe $G_{S}$ poss\`ede le m\^eme arbre de Bruhat-Tits que $G$, sur lequel $G_{S}$ agit cette fois de mani\`ere non transitive. Plus pr\'ecis\'ement, l'action de $G_{S}$ partitionne les sommets de $X$ en deux orbites (disjointes) : celle de $x_{0}$, que l'on notera $X_{p}$ et qui est constitu\'ee des cercles de rayon pair dans $X$, et celle de $x_{1}$, que l'on notera $X_{imp}$ et qui est constitu\'ee des cercles de rayon impair dans $X$. Le r\'esultat suivant est alors imm\'ediat.
\begin{prop}
Toute fonction $f \in \text{c-ind}_{K\*Z}^{G}(\sigma_{\vec{r}})$ se d\'ecompose de mani\`ere unique sous la forme 
$$f = f_{p} + f_{imp}$$
 avec $f_{p}$ (resp. $f_{imp}$) \'el\'ement de c-ind$_{K\*Z}^{G}(\sigma_{\vec{r}})$ \`a support dans $X_{p}$ (resp. $X_{imp}$).
\label{decoupe}
\end{prop}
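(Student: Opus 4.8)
La preuve est purement formelle et repose uniquement sur deux faits : l'ensemble des sommets de $X$ se partitionne en $X = X_{p} \sqcup X_{imp}$ (c'est exactement la description des deux orbites de $G_{S}$ sur $X$ rappel\'ee ci-dessus), et le support d'une fonction de $\text{c-ind}_{KZ}^{G}(\sigma_{\vec{r}})$ est une partie finie bien d\'efinie de cet ensemble de sommets (cf. Section \ref{immob}). Le plan est donc simplement de d\'ecouper une fonction donn\'ee selon ces deux morceaux disjoints du support.

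Concr\`etement, je commencerais par rappeler que toute $f \in \text{c-ind}_{KZ}^{G}(\sigma_{\vec{r}})$ s'\'ecrit comme une combinaison $\FFF_{p}$-lin\'eaire finie $f = \sum_{i} [g_{i}, v_{i}]$ de fonctions \'el\'ementaires, o\`u, quitte \`a regrouper les termes associ\'es \`a un m\^eme sommet gr\^ace \`a la propri\'et\'e (ii) de la Section \ref{hecke}, on peut supposer les sommets $g_{i}x_{0}$ deux \`a deux distincts. Je poserais alors
$$ f_{p} := \sum_{g_{i}x_{0} \in X_{p}} [g_{i}, v_{i}] \qquad \text{et} \qquad f_{imp} := \sum_{g_{i}x_{0} \in X_{imp}} [g_{i}, v_{i}] \ . $$
Comme $X_{p}$ et $X_{imp}$ forment une partition des sommets de $X$, ceci fournit une d\'ecomposition $f = f_{p} + f_{imp}$ avec $f_{p}$ (resp. $f_{imp}$) \`a support dans $X_{p}$ (resp. $X_{imp}$), et il est clair que $f_{p}$ et $f_{imp}$ restent dans $\text{c-ind}_{KZ}^{G}(\sigma_{\vec{r}})$. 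De fa\c{c}on \'equivalente, on peut voir $\text{c-ind}_{KZ}^{G}(\sigma_{\vec{r}})$ comme la somme directe, index\'ee par les sommets $x$ de $X$, des sous-espaces des fonctions \`a support dans $\{x\}$, et la d\'ecomposition cherch\'ee n'est autre que le regroupement de ces facteurs suivant l'appartenance \`a $X_{p}$ ou \`a $X_{imp}$.

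Pour l'unicit\'e, si $f = f_{p} + f_{imp} = f'_{p} + f'_{imp}$ sont deux telles d\'ecompositions, alors $f_{p} - f'_{p} = f'_{imp} - f_{imp}$ est une fonction dont le support est contenu \`a la fois dans $X_{p}$ et dans $X_{imp}$, donc est vide : d'o\`u $f_{p} = f'_{p}$ et $f_{imp} = f'_{imp}$. Il n'y a ici aucune difficult\'e s\'erieuse ; le seul point m\'eritant une br\`eve v\'erification est que $X_{p}$ et $X_{imp}$ sont effectivement disjoints et recouvrent tous les sommets de $X$, ce qui a d\'ej\`a \'et\'e \'etabli au moyen de l'action de $G_{S}$ sur l'arbre $X$.
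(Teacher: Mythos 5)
Votre argument est correct et correspond exactement \`a ce que le papier consid\`ere : la Proposition~\ref{decoupe} y est d'ailleurs \'enonc\'ee sans d\'emonstration (<< le r\'esultat suivant est alors imm\'ediat >>), pr\'ecis\'ement parce qu'elle d\'ecoule formellement de la partition des sommets de $X$ en $X_{p} \sqcup X_{imp}$ et de la finitude du support des \'el\'ements de $\text{c-ind}_{KZ}^{G}(\sigma_{\vec{r}})$. Votre d\'ecoupage en fonctions \'el\'ementaires et l'argument d'unicit\'e par disjonction des supports explicitent simplement cette observation.
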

\noindent De plus, les formules donnant l'action de $T_{\vec{r}}$ sur c-ind$_{K\*Z}^{G}(\sigma_{\vec{r}})$ \cite[(4) page 7]{Br1} montrent que l'action de $T_{\vec{r}}$ sur $X$, d\'efinie par la description du support de $T_{\vec{r}}(f)$ sur l'arbre en fonction du support de $f \in \text{c-ind}_{K\*Z}^{G}(\sigma_{\vec{r}})$, v\'erifie les deux inclusions suivantes : 
\begin{equation}
\left\{ \begin{array}{c} T_{\vec{r}}(X_{p}) \subset X_{imp} \ ; \\ T_{\vec{r}}(X_{imp}) \subset X_{p} \ . \end{array} \right. 
\label{echange}
\end{equation}\\ 

\subsubsection{Poids de Serre et supersingularit\'e}
\noindent De mani\`ere analogue \`a ce qui est fait dans le cas de $GL_{2}$, on appelle \emph{poids de Serre de $K_{S}$} une classe d'isomorphisme de repr\'esentations lisses irr\'eductibles de $K_{S}Z_{S}$ sur $\FFF_{p}$ (ou, par abus, une telle repr\'esentation). Ces poids de Serre sont bien connus \cite[Section 1]{Jey} : un syst\`eme de repr\'esentants en est fourni par les repr\'esentations inflat\'ees \`a partir de la restriction \`a $SL_{2}(k_{F})$ des repr\'esentations $\sigma_{\vec{r}}$ d\'efinies dans la section \ref{SerreGL}.\\
D'autre part, on conna\^it la structure des alg\`ebres de Hecke sph\'eriques associ\'ees \`a ces poids \cite[Th\'eor\`eme 1]{Moivieux}: elles sont isomorphes \`a des alg\`ebres de polyn\^omes $\FFF_{p}[\tau_{\vec{r}}]$, o\`u l'op\'erateur $\tau_{\vec{r}}$ agit sur les fonctions $[g, v]$ (et sur l'arbre $X$) comme le fait l'op\'erateur $T_{\vec{r}}^{2}$. Ceci nous permet de donner la d\'efinition suivante pour les repr\'esentations supersinguli\`eres de $G_{S}$ : ce sont, lorsqu'ils existent, les quotients irr\'eductibles des repr\'esentations conoyau $\displaystyle \frac{ \text{c-ind}_{K_{S}Z_{S}}^{G_{S}}(\sigma_{\vec{r}})}{\tau_{\vec{r}}( \text{c-ind}_{K_{S}Z_{S}}^{G_{S}}(\sigma_{\vec{r}}) )}$.\\

\section{Lien entre les repr\'esentations de $GL_{2}(F)$ et de $SL_{2}(F)$}
\label{restriction}
\begin{prop}
Supposons que $F$ soit de caract\'eristique diff\'erente de $2$.\\
Soit $\sigma$ une repr\'esentation lisse irr\'eductible \`a caract\`ere central de $G_{S}$. Il existe une repr\'esentation lisse irr\'eductible $\pi$ de $G$ ayant un caract\`ere central prolongeant celui de $\sigma$ et telle que la restriction de $\pi$ \`a $G_{S}$ contienne $\sigma$.
\label{restri}
\end{prop}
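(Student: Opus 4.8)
The idea is to produce $\pi$ by induction from an open subgroup of $G$ containing $G_S$, exactly in the spirit of Clifford theory, and to use the index-two-type relationship between $G_S$ and $G$ modulo center. First I would set $H := G_S Z = \{g \in G : \det g \in (F^\times)^2\}$, which is an open normal subgroup of $G$ with $G/H \cong F^\times/(F^\times)^2$; since $F$ has characteristic different from $2$ and finite residue field, this quotient is finite (of order $4$ if $p \neq 2$, order $2$ if $p = 2$ residually — in any case finite), so $H$ is open of finite index. The restriction of a smooth irreducible representation of $G$ to $H$ behaves well because $G/H$ is finite; conversely, I want to go the other way: start from $\sigma$ on $G_S$ and build up.

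The key intermediate step is to first extend $\sigma$ (with its central character) from $G_S$ to $H = G_S Z$. Because $Z \cap G_S = Z_S$ and $\sigma$ has a central character $\eta_S$ on $Z_S$, one checks that $\eta_S$ extends to a character $\eta$ of $Z$ (here I use that $F^\times$, being a quotient of a free abelian group by divisible-free considerations, or more concretely that $\mathcal{O}_F^\times \times \varpi_F^{\mathbb{Z}}$ surjects onto things — the point is extending a character of the order-$\leq 2$ group $\{\pm 1\}$ to $F^\times$, which is elementary). Since $G_S$ and $Z$ commute and meet in $Z_S$, the pair $(\sigma, \eta)$ glues to a representation $\tilde\sigma$ of $H = G_S Z$ with central character $\eta$, still irreducible as an $H$-representation because $\sigma$ was $G_S$-irreducible and $Z$ acts by scalars. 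Then I would set $\pi := \mathrm{ind}_H^G(\tilde\sigma)$ (ordinary induction; since $[G:H]$ is finite this equals compact induction), which is a smooth representation of $G$ whose restriction to $H$, hence to $G_S$, contains $\tilde\sigma|_{G_S} = \sigma$ by Frobenius reciprocity, and which admits $\eta$ as a central character since $Z \subset H$ acts by $\eta$ on $\tilde\sigma$ and induction from a subgroup containing $Z$ preserves the central character.

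The remaining issue, and the main obstacle, is irreducibility of $\pi$: $\mathrm{ind}_H^G(\tilde\sigma)$ need not be irreducible, so I would replace it by an irreducible subquotient. Here one must be careful in the modular setting — the standard Clifford theory over $\mathbb{C}$ gives a decomposition into irreducibles, but over $\overline{\mathbb{F}_p}$ with $p \mid [G:H]$ possible, $\mathrm{ind}_H^G(\tilde\sigma)$ may fail to be semisimple. Nonetheless it is a nonzero smooth representation of $G$ with a central character, so it has an irreducible subquotient $\pi_0$ with the same central character $\eta$; I then need to know that $\sigma$ still occurs in $\pi_0|_{G_S}$. For this I would use the fact that $\sigma$ (admissible, being irreducible with a central character — or at least that it is generated by a single vector on which a compact-open subgroup acts, hence occurs already in $\tilde\sigma|_{K_S Z_S}$-isotypic pieces), together with a Mackey-type analysis of $\mathrm{ind}_H^G(\tilde\sigma)|_{G_S}$: since $G = \bigsqcup G_S Z g_i$ for coset representatives $g_i$ with $g_i \in K$ chosen to represent $F^\times/(F^\times)^2$ via the determinant, Mackey's formula gives $\mathrm{ind}_H^G(\tilde\sigma)|_{G_S} \cong \bigoplus_i \sigma^{g_i}$, a finite direct sum of $G_S$-conjugates of $\sigma$. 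Every irreducible subquotient of $\pi|_{G_S}$ is therefore a subquotient of some $\sigma^{g_i}$, hence irreducible; and one shows that at least one irreducible subquotient $\pi_0$ of $\pi$ as a $G$-representation has $\sigma$ itself (not merely a conjugate) in its $G_S$-restriction — this is where I expect to spend the most effort, arguing via the orbit of the $G$-action on the finite set $\{\sigma^{g_i}\}$ and the fact that $\pi$, being induced, has $G$ permuting these constituents transitively within each indecomposable summand. Concretely: let $W \subset \pi$ be minimal among $G$-subrepresentations whose $G_S$-restriction contains a copy of some $\sigma^{g_i}$; conjugating by $g_i^{-1} \in G$ (note $g_i$ normalizes $G_S$ up to the $H$-action, and $\sigma^{g_i}$ is just $\sigma$ re-labelled) reduces to $\sigma$ occurring, and then any irreducible quotient or sub of $W$ inherits this. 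Thus $\pi_0$ is the desired representation, completing the proof.
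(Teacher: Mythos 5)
Your construction coincides with the paper's: extend $\sigma$ to $H=G_S Z$ by choosing a smooth character of $Z$ extending the central character on $Z_S$, induce to $G$, and use that $H$ is open, normal and of finite index --- which is exactly where the hypothesis $\mathrm{car}\,F\neq 2$ enters, since $F^{\times}/(F^{\times})^{2}$ is infinite in characteristic $2$ (incidentally, its order in residue characteristic $2$ is not $2$: for $\mathbb{Q}_{2}$ it is $8$; but only finiteness matters). Where you genuinely diverge is the extraction of an irreducible constituent still containing $\sigma$. The paper does this without Mackey theory: it keeps the canonical copy of $\sigma$ inside $\Pi=Ind_{G_SZ}^{G}(\sigma)$, takes a finite filtration $\{0\}=\Pi_{m}\subset\cdots\subset\Pi_{0}=\Pi$ with irreducible quotients, and lets $r$ be the largest index with $\sigma\cap\Pi_{r}\neq 0$; irreducibility of $\sigma$ forces $\sigma\subset\Pi_{r}$ while maximality gives $\sigma\cap\Pi_{r+1}=0$, so $\sigma$ embeds into the irreducible quotient $\pi_{r}=\Pi_{r}/\Pi_{r+1}$. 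Your route via Mackey ($\pi\vert_{G_S}\simeq\bigoplus_{i}\sigma^{g_{i}}$, semisimple of finite length) also works, but the last step as written is loose: the precise chain is (i) every nonzero $G$-subrepresentation of $\pi$ restricts to a nonzero submodule of the semisimple module $\bigoplus_{i}\sigma^{g_{i}}$, hence contains some $\sigma^{g_{i}}$ as a $G_S$-subrepresentation, so a minimal nonzero $G$-subrepresentation $W$ exists (finite length) and is automatically irreducible; (ii) since $W$ is $G$-stable and $G_S$ is normal, translating an embedded copy of $\sigma^{g_{i}}$ by the appropriate $\pi(g_{i}^{\pm 1})$ untwists the conjugation and yields an embedded copy of $\sigma$ in $W\vert_{G_S}$. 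Your closing claim that ``any irreducible quotient or sub of $W$ inherits this'' is not justified as stated and becomes unnecessary once one notes that the minimal $W$ is itself irreducible. With those two points made precise your proof is complete; the paper's filtration trick buys the same conclusion while avoiding semisimplicity and conjugation arguments, and your Mackey analysis buys slightly more information (the explicit list of $G_S$-constituents of the induced representation).
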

\begin{proof}
Plusieurs d\'emonstrations de ce r\'esultat sont possibles. Nous donnons ici une preuve utilisant un argument de filtration par le socle : soit $\sigma$ une repr\'esentation lisse irr\'eductible de $G_{S}$. On l'\'etend en une repr\'esentation lisse de $G_{S}Z$ (encore not\'ee $\sigma$) en faisant agir $Z$ par un caract\`ere lisse \'etendant l'action de $Z_{S}$, puis l'on consid\`ere la repr\'esentation induite $\Pi := Ind_{G_{S}Z}^{G}(\sigma)$. Comme $G_{S}Z$ est un sous-groupe normal d'indice fini de $G$, $\Pi$ est une repr\'esentation de $G$ de longueur finie, ce qui signifie qu'elle peut \^etre filtr\'ee par une suite finie de $G$-modules 
$$ \{0\} = \Pi_{m} \subset \Pi_{m-1} \subset \ldots \subset \Pi_{1} \subset \Pi_{0} = \Pi$$
telle que pour tout $i \in \{0, \ldots, m-1\}$, le quotient $\pi_{i} := \Pi_{i} / \Pi_{i+1}$ soit un $G$-module irr\'eductible.\\
Notons $r \in \{0, \ldots , m-1\}$ le plus grand entier tel que $\sigma \cap \Pi_{r}$ soit non nul. Par irr\'eductibilit\'e de $\sigma$ comme repr\'esentation de $G_{S}$, on obtient que $\sigma$ est contenue dans $\Pi_{r}$, tandis que la maximalit\'e de $r$ implique que $\sigma \cap \Pi_{r+1}$ est nul. Ceci permet de voir $\sigma$ comme une sous-repr\'esentation de $\Pi_{r} / \Pi_{r+1} = \pi_{r}$, qui est par construction une repr\'esentation de $G$ irr\'eductible. On peut donc conclure en prenant $\pi := \pi_{r}$.
\end{proof}

\begin{NB} On peut m\^eme d\'emontrer \cite[Lemma 2.4]{LL} que, lorsque $F$ est de caract\'eristique nulle (i.e. une extension finie de $\QQ_{p}$), la restriction de toute repr\'esentation lisse irr\'eductible admissible de $G$ \`a $G_{S}$ se d\'ecompose en une somme directe finie de repr\'esentations lisses irr\'eductibles de $G_{S}$. 
\end{NB}

\section{Repr\'esentations non supersinguli\`eres de $SL_{2}(F)$}
\label{classifnssg}
\subsection{Caract\`eres de $G_{S}$}
De mani\`ere g\'en\'erale, l'ab\'elianis\'e de $SL_{n}$ est trivial pour tout entier $n \geq 2$, de sorte que tout homomorphisme de groupes de $SL_{n}$ \`a valeurs dans un groupe ab\'elien est trivial. On en d\'eduit que l'on a en particulier l'\'enonc\'e suivant.
\begin{prop}
Le seul caract\`ere lisse de $G_{S}$ \`a valeurs dans $\FFF_{p}^{\times}$ est le caract\`ere trivial.
\label{caracgs}
\end{prop}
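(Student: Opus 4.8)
Le plan est de ramener l'énoncé à la perfection du groupe $G_{S}$. En effet, comme $\FFF_{p}^{\times}$ est abélien, tout caractère $\chi : G_{S} \to \FFF_{p}^{\times}$ (lisse ou non) est un homomorphisme de groupes vers un groupe abélien, donc s'annule sur le sous-groupe dérivé $[G_{S}, G_{S}]$ et se factorise à travers l'abélianisé $G_{S}/[G_{S}, G_{S}]$. Il suffira donc d'établir que $G_{S} = SL_{2}(F)$ vérifie $G_{S} = [G_{S}, G_{S}]$ : le caractère $\chi$ sera alors automatiquement trivial. On notera que la lissité de $\chi$ ne servira à rien, l'énoncé étant de nature purement << abstraite >>.

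\textbf{Perfection de $SL_{2}(F)$.} Pour cette étape, je commencerais par rappeler, via la réduction de Gauss, que $G_{S}$ est engendré par les matrices élémentaires unipotentes $u^{+}(t) := \left( \begin{smallmatrix} 1 & t \\ 0 & 1 \end{smallmatrix}\right)$ et $u^{-}(t) := \left( \begin{smallmatrix} 1 & 0 \\ t & 1 \end{smallmatrix}\right)$, où $t$ parcourt $F$. Comme $F$ est un corps local de corps résiduel fini, il est infini, et l'on peut donc fixer $a \in F^{\times}$ avec $a^{2} \neq 1$. Un calcul matriciel immédiat fournit alors, pour tout $s \in F$, l'identité de commutateurs dans $G_{S}$
$$\left[ \left( \begin{array}{cc} a & 0 \\ 0 & a^{-1} \end{array}\right), \, u^{+}(s) \right] = u^{+}\big( (a^{2}-1)\,s \big) \ ,$$
et, en choisissant $s = (a^{2}-1)^{-1}\,t$, on obtient $u^{+}(t) \in [G_{S}, G_{S}]$ pour tout $t \in F$. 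Le calcul symétrique (ou une conjugaison par $s = \left( \begin{smallmatrix} 0 & -1 \\ 1 & 0 \end{smallmatrix}\right)$) donnerait de même $u^{-}(t) \in [G_{S}, G_{S}]$. Tous les générateurs de $G_{S}$ étant dans le sous-groupe dérivé, on conclut que $G_{S}$ est parfait.

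\textbf{Conclusion et difficulté attendue.} Une fois la perfection acquise, l'abélianisé de $G_{S}$ est trivial, et il s'ensuit aussitôt que l'unique homomorphisme $G_{S} \to \FFF_{p}^{\times}$ est le caractère trivial, ce qui conclut. Honnêtement, cette proposition ne présente pas de véritable obstacle : le seul point à surveiller est l'existence de $a \in F^{\times}$ avec $a^{2} \neq 1$ --- sans laquelle l'astuce de commutateur tombe en défaut, comme le montrent les cas $SL_{2}(\FF_{2})$ et $SL_{2}(\FF_{3})$ --- mais elle est triviale ici puisque $F$ est infini. On pourrait du reste court-circuiter tout le calcul en invoquant directement le fait classique, rappelé juste avant l'énoncé, que $SL_{n}$ sur un corps infini a un abélianisé trivial.
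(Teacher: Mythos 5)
Votre preuve est correcte et suit essentiellement la même voie que l'article : celui-ci se contente d'invoquer la trivialité de l'abélianisé de $SL_{n}$ (tout homomorphisme vers un groupe abélien, tel $\FFF_{p}^{\times}$, est donc trivial), tandis que vous fournissez en plus la vérification classique de la perfection de $SL_{2}(F)$ via les matrices élémentaires et l'identité de commutateur avec $a^{2} \neq 1$, ce qui est un simple remplissage du fait admis. Votre remarque finale sur les exceptions $SL_{2}(\FF_{2})$ et $SL_{2}(\FF_{3})$ est pertinente et justifie bien l'hypothèse $F$ infini, automatiquement satisfaite ici.
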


\subsection{Repr\'esentations de la s\'erie principale}
\label{sp}
\subsubsection{Terminologie}
On commence par rappeler la description des caract\`eres modulaires lisses du sous-groupe de Borel $B_{S}$. 
\begin{lm}
Tout caract\`ere lisse de $B_{S}$ \`a valeurs dans $\FFF_{p}^{\times}$ s'obtient par inflation \`a partir d'un caract\`ere lisse $\eta : F^{\times} \to \FFF_{p}^{\times}$. On note encore $\eta$ (ou $\eta \otimes \mathbf{1}$) ce caract\`ere.
\label{caracbs}
\end{lm}
\begin{proof}
Soit $\chi : B_{S} \to \FFF_{p}^{\times}$ un caract\`ere lisse. Comme ($\FFF_{p}^{\times}$, $\times$) est un groupe ab\'elien, $\chi$ se factorise \`a travers l'ab\'elianis\'e de $B_{S}$, qui n'est autre que le tore diagonal $T_{S} \simeq F^{\times}$. Autrement dit, il existe un caract\`ere lisse $\eta : F^{\times} \to \FFF_{p}^{\times}$ tel que : $\forall \ \lambda \in F^{\times}$, $\forall \ b \in F$,
$$ \chi(\left( \begin{array}{cc} \lambda & b \\ 0 & \lambda^{-1} \end{array}\right)) = \eta(\lambda)$$
ce qui est le r\'esultat demand\'e.
\end{proof}
\begin{NB}
La m\^eme construction permet d'obtenir un caract\`ere $\eta := \eta_{1} \otimes \eta_{2}$ de $B$ \`a partir de deux caract\`eres lisses $\eta_{1}, \eta_{2}$ de $F^{\times}$. Lorsque l'on restreint $\eta$ \`a $B_{S}$, on r\'ecup\`ere le caract\`ere $(\eta_{1}\eta_{2}^{-1})\otimes \mathbf{1}= \eta_{1}\eta_{2}^{-1}$.
\end{NB}

\noindent Soit $\eta : B_{S} \to \FFF_{p}^{\times}$ un caract\`ere lisse. On lui associe par induction lisse \`a partir de $B_{S}$ une repr\'esentation de $G_{S}$ not\'ee  $Ind_{B_{S}}^{G_{S}}(\eta)$ qui a pour espace de repr\'esentation le $\FFF_{p}$-espace vectoriel des fonctions $f : G_{S} \to \FFF_{p}$ localement constantes telles que 
$$ \forall (b,g) \in B_{S} \times G_{S}, \  f(bg) = \eta(b) f(g)\} $$
sur lequel $G_{S}$ agit par translations \`a droite : 
$$ \forall \ (g, x) \in G_{S} \times G_{S}, \ (g \cdot f)(x) := f(xg) \ .$$
Lorsqu'une telle repr\'esentation est irr\'eductible, on dit que c'est une repr\'esentation \emph{de la s\'erie principale} de $G_{S}$. La m\^eme terminologie vaut pour les repr\'esentations de $G$ obtenues par induction parabolique \`a partir d'un caract\`ere lisse modulo $p$ de $B$.\\

\subsubsection{Crit\`ere d'irr\'eductibilit\'e des s\'eries principales}
\label{irredspal}
L'objectif de cette partie est de d\'emontrer le th\'eor\`eme suivant.
\begin{thm}
Soit $\eta : B_{S} \to \FFF_{p}^{\times}$ un caract\`ere lisse non trivial. La repr\'esentation $Ind_{B_{S}}^{G_{S}}(\eta)$ est alors une repr\'esentation lisse irr\'eductible de $G_{S}$ ayant pour caract\`ere central $\eta$.
\label{sppal}
\end{thm}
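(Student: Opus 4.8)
Le plan est de suivre la stratégie classique de Barthel--Livné pour $GL_2$, adaptée ici à $SL_2$. D'abord, le caractère central se lit immédiatement : si $\begin{pmatrix} -1 & 0 \\ 0 & -1\end{pmatrix} \in Z_S$ agit sur $Ind_{B_S}^{G_S}(\eta)$, alors comme cet élément est central, il agit par un scalaire sur chaque sous-quotient irréductible ; on calcule directement $(g \cdot f)(x) = f(-x) = f(x)$ si l'on utilise que $-I_2 \in B_S$ et que $\eta(-I_2) = \eta(-1)$ (vu $\eta$ comme caractère de $F^\times$ via le Lemme \ref{caracbs}), donc le caractère central est $\eta(-1)$, c'est-à-dire $\eta$ restreint à $Z_S$, conformément à l'énoncé. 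Ensuite vient le cœur de la démonstration : l'irréductibilité. J'utiliserais la décomposition de Bruhat $G_S = B_S \sqcup B_S s U$ (où $s = \begin{pmatrix} 0 & -1 \\ 1 & 0\end{pmatrix}$), qui permet d'identifier $Ind_{B_S}^{G_S}(\eta)$, après restriction au « gros cellule », aux fonctions sur $U \simeq F$ ; la lissité impose que ce sont les fonctions localement constantes à support compact, plus un éventuel terme « à l'infini » correspondant à la cellule $B_S$.

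La clef est la suivante : soit $W \subset Ind_{B_S}^{G_S}(\eta)$ une sous-représentation non nulle. Comme $I_S(1)$ est un pro-$p$-groupe, la Proposition \ref{faitcrucial} fournit un vecteur non nul $f \in W$ fixé par $I_S(1)$. On analyse alors l'espace des vecteurs $I_S(1)$-invariants de $Ind_{B_S}^{G_S}(\eta)$ : en utilisant la décomposition d'Iwasawa $G_S = B_S K_S$ et la décomposition de $K_S$ en doubles classes modulo $I_S(1)$ (il y en a essentiellement deux, indexées par le groupe de Weyl, puisque $K_S/I_S(1)$ se comporte comme $SL_2(k_F)/$(unipotent), et le caractère $\eta$ étant non trivial, une seule des deux contributions survit quand on impose l'invariance). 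On montre ainsi que $(Ind_{B_S}^{G_S}(\eta))^{I_S(1)}$ est de dimension $1$ ou $2$ sur $\FFF_p$ selon que $\eta$ est ramifié ou non ; puis on vérifie qu'un générateur convenable de cet espace (par exemple la fonction $f_0$ supportée sur $B_S I_S(1)$) engendre tout $Ind_{B_S}^{G_S}(\eta)$ sous l'action de $G_S$ : pour cela on fait agir les éléments $\begin{pmatrix} 1 & 0 \\ c & 1\end{pmatrix}$ et $s$, ce qui permet de « propager » le support de $f_0$ sur toute la grosse cellule, et l'hypothèse $\eta \neq \mathbf{1}$ garantit qu'aucune relation linéaire parasite n'apparaît (c'est précisément ici que l'irréductibilité échoue pour $\eta$ trivial, où l'on a la sous-représentation des fonctions constantes).

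L'argument final combine ces deux observations : toute sous-représentation non nulle $W$ contient un vecteur $I_S(1)$-invariant ; cet espace de vecteurs invariants étant de petite dimension, on montre que $W$ contient nécessairement $f_0$ (ou un translaté engendrant le même $G_S$-module) — l'ingrédient est que l'opérateur de Hecke associé à $s$ (ou l'action du tore $T_S$) ne peut annuler la composante « générique » sans que $\eta$ soit trivial. Comme $f_0$ engendre $Ind_{B_S}^{G_S}(\eta)$, on conclut $W = Ind_{B_S}^{G_S}(\eta)$. Le point délicat sera la description précise de l'espace des $I_S(1)$-invariants et la vérification que $\eta \neq \mathbf{1}$ suffit à éliminer toutes les sous-représentations propres : il faut traiter séparément les cas ramifié et non ramifié, et dans le cas non ramifié s'assurer que les deux vecteurs invariants ne sont pas tous deux contenus dans une même sous-représentation propre. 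On pourra alternativement invoquer le lien avec $GL_2(F)$ fourni par le Théorème \ref{dejairred}(1), qui identifie $Ind_{B_S}^{G_S}(\eta)$ à la restriction de $Ind_B^G(\eta \otimes \mathbf{1})$, et transférer ainsi l'irréductibilité connue sur $GL_2$ — mais il faut alors vérifier qu'aucune sous-représentation de $GL_2$-type ne se scinde par restriction à $G_S$, ce qui revient au même calcul d'invariants.
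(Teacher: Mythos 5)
Votre plan reprend bien la strat\'egie << \`a la Barthel--Livn\'e >> que le papier suit dans son Annexe A (vecteur $I_{S}(1)$-invariant via la Proposition \ref{faitcrucial}, base explicite des invariants, lemmes de g\'en\'eration, distinction ramifi\'e/non ramifi\'e), mais il comporte une erreur de fait et laisse pr\'ecis\'ement de c\^ot\'e les points o\`u se joue la d\'emonstration. D'abord, l'espace $(Ind_{B_{S}}^{G_{S}}(\eta))^{I_{S}(1)}$ n'est pas << de dimension $1$ ou $2$ selon que $\eta$ est ramifi\'e ou non >> : il est toujours de dimension $2$, car $G_{S} = B_{S}I_{S}(1) \sqcup B_{S}\beta_{0}I_{S}(1)$ et tout caract\`ere lisse \`a valeurs dans $\FFF_{p}^{\times}$ est trivial sur un pro-$p$-groupe, donc aucune des deux << contributions >> ne dispara\^it ; c'est seulement au niveau des $K_{S}$-invariants que la ramification fait chuter la dimension ($1$ si $\eta$ est non ramifi\'e, $0$ sinon). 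Ensuite, les deux ingr\'edients qui font r\'eellement marcher l'argument ne sont qu'\'evoqu\'es : (i) la g\'en\'eration de toute l'induite par $f_{0}$ (cas non ramifi\'e, Corollaire \ref{genphi}) ou par $\{\ell_{1},\ell_{2}\}$ (cas ramifi\'e, Corollaire \ref{generam}) repose sur des calculs de sommes sur des syst\`emes de repr\'esentants (Lemmes \ref{lemtech}, \ref{techramphi}, \ref{cleramphi}) o\`u, en caract\'eristique $p$, des termes comme $q$ ou $q^{2}$ s'annulent et o\`u le facteur $(1-\Lambda^{-1})$ explique exactement pourquoi $\eta \neq \mathbf{1}$ est n\'ecessaire -- dire que l'hypoth\`ese << garantit qu'aucune relation parasite n'appara\^it >> ne remplace pas ce calcul ; (ii) le point que vous signalez vous-m\^eme comme << d\'elicat >> -- montrer qu'un invariant g\'en\'eral $a_{1}f_{1}+a_{2}f_{2}$ avec $a_{1}a_{2}\neq 0$ engendre tout, y compris dans les cas limites $\Lambda^{2}=1$ (caract\`ere quadratique non ramifi\'e) et $\eta^{2}\vert_{\mathcal{O}_{F}^{\times}}=\mathbf{1}$ (cas ramifi\'e), o\`u l'action du tore ne s\'epare plus $f_{1}$ de $f_{2}$ -- est exactement l\`a o\`u le papier doit faire intervenir $s=\alpha_{0}\beta_{0}$ et une analyse cas par cas ; sans cela il n'y a pas de preuve.

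Enfin, l'\'echappatoire propos\'ee en fin de plan est \`a \'ecarter : dans l'architecture du papier, le point (1) du Th\'eor\`eme \ref{dejairred} est \emph{d\'eduit} du Th\'eor\`eme \ref{sppal} (via le plongement de la Proposition \ref{propcleG}, qui devient un isomorphisme parce que le but est irr\'eductible), donc l'invoquer ici serait circulaire ; et de toute fa\c{c}on l'irr\'eductibilit\'e d'une repr\'esentation de $G$ ne transf\`ere pas \`a sa restriction \`a $G_{S}$ (les $\pi(r,0,1)$, irr\'eductibles pour $G$, se scindent en deux morceaux sur $G_{S}$), comme vous le pressentez d'ailleurs. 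Notez aussi que le papier propose une seconde m\'ethode, compl\`etement diff\'erente de votre plan, par restriction \`a $B_{S}$ et module de Jacquet (longueur $2$ sur $B_{S}$, nullit\'e du module de Jacquet du sous-objet faute de mesure de Haar \`a valeurs dans $\FFF_{p}$, puis contradiction avec la Proposition \ref{caracgs}) ; elle \'evite justement la discussion ramifi\'e/non ramifi\'e qui reste le trou principal de votre proposition. La v\'erification du caract\`ere central via $-I_{2}\in B_{S}$ est, elle, correcte.
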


Le seul point non trivial de cet \'enonc\'e est l'assertion d'irr\'eductibilit\'e. Une premi\`ere m\'ethode de d\'emonstration consiste \`a reprendre les \'etapes de la preuve effectu\'ee par Barthel-Livn\'e pour les s\'eries principales de $GL_{2}(F)$ et \`a les adapter au cas de $SL_{2}(F)$ : c'est ce que nous faisons dans l'Annexe \ref{spBL}.\\
Une seconde m\'ethode possible repose sur l'\'etude de la restriction \`a $B_{S}$ des s\'eries principales de $G_{S}$. Un premier int\'er\^et est qu'elle semble plus susceptible que la premi\`ere d'\^etre g\'en\'eralisable, comme en attestent par exemple les travaux de Vign\'eras \cite{Vig08}. Un second avantage est qu'elle ne n\'ecessite pas de distinguer le cas ramifi\'e du cas non ramifi\'e, ce qui est le cas dans la m\'ethode de Barthel-Livn\'e.\\

Nous donnons ici les grandes \'etapes de la seconde m\'ethode : pour les d\'etails, nous renvoyons \`a l'Annexe \ref{sppaljac}.
On commence par observer que l'application d'\'evaluation en $I_{2}$ permet facilement de voir\footnote{On peut aussi voir ce r\'esultat comme un cas particulier de \cite[Th\'eor\`eme 1]{Vig08}.} que la restriction \`a $B_{S}$ de $Ind_{B_{S}}^{G_{S}}(\eta)$ est de longueur $2$ avec pour quotient le caract\`ere $\eta$ et pour sous-objet le sous-espace $W$ des \'el\'ements \`a support dans $B_{S}sU$. Par suite, si $Ind_{B_{S}}^{G_{S}}(\eta)$ est une repr\'esentation r\'eductible de $G_{S}$, elle est elle aussi de longueur 2 et admet un sous-quotient de dimension 1, qui est n\'ecessairement le caract\`ere trivial de $G_{S}$ par la Proposition \ref{caracgs}. L'\'etude du module de Jacquet associ\'e \`a $Ind_{B_{S}}^{G_{S}}(\eta)$ implique alors, par un raisonnement analogue \`a celui men\'e dans le cadre classique des repr\'esentations complexes, que $\eta$ doit \^etre le caract\`ere trivial de $B_{S}$, ce qui prouve le Th\'eor\`eme \ref{sppal} par contraposition.

\subsection{Repr\'esentations en s\'eries sp\'eciales}
Pour compl\'eter ce qui a \'et\'e fait dans la section pr\'ec\'edente, il nous reste \`a \'etudier la repr\'esentation $Ind_{B_{S}}^{G_{S}}(\mathbf{1})$ obtenue par induction parabolique du caract\`ere trivial de $B_{S}$. Il est facile de voir qu'elle n'est pas irr\'eductible puisque l'espace des fonctions constantes sur $G_{S}$, sur lequel $G_{S}$ agit trivialement, en est une sous-repr\'esentation propre \'evidente. Notons $\mathbf{1}$ cette repr\'esentation et consid\'erons le quotient $Sp_{S} := Ind_{B_{S}}^{G_{S}}(\mathbf{1}) / \mathbf{1}$ qui lui est associ\'e. Nous allons d\'emontrer le r\'esultat suivant, qui est le pendant de \cite[Theorem 28]{BL2}.
\begin{thm}
La repr\'esentation $Sp_{S}$ est irr\'eductible.
\label{irredsp}
\end{thm}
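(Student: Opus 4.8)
The strategy parallels \cite[Theorem 28]{BL2}: realise $Sp_{S}$ inside an explicit space of functions, show that its pro-$p$-Iwahori invariants form a line, and check that a generator of that line generates the whole representation. Using the Iwasawa decomposition $G_{S}=B_{S}K_{S}$ together with the Bruhat decomposition $G_{S}=B_{S}\sqcup B_{S}sU$, I would first identify $Ind_{B_{S}}^{G_{S}}(\mathbf{1})$ with the space $C^{\infty}(\PP^{1}(F))$ of locally constant $\FFF_{p}$-valued functions on $\PP^{1}(F)=F\cup\{\infty\}$, on which $G_{S}$ acts through homographies, $B_{S}$ being the stabiliser of $\infty$; under this identification the subrepresentation $\mathbf{1}$ is the line of constant functions and $Sp_{S}\simeq C^{\infty}(\PP^{1}(F))/(\text{constant functions})$. (The balls $a+\varpi_{F}^{m}\mathcal{O}_{F}$ and their complements in $\PP^{1}(F)$ are precisely the traces on the boundary of the half-trees of $X$.)

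Next I would compute $Sp_{S}^{I_{S}(1)}$. The pro-$p$-Iwahori $I_{S}(1)$ has exactly two orbits on $\PP^{1}(F)$, namely $\mathcal{O}_{F}$ and its complement: indeed $I_{S}(1)$ stabilises $\mathcal{O}_{F}$ (and acts transitively on it through its upper-triangular part), and $|B_{S}\backslash G_{S}/I_{S}(1)|=2$, which one sees by reducing modulo $\varpi_{F}$ via Iwasawa to the count $|\overline{B}\backslash SL_{2}(k_{F})/\overline{U}|=2$. Hence $C^{\infty}(\PP^{1}(F))^{I_{S}(1)}$ is the plane spanned by $\mathbf{1}_{\mathcal{O}_{F}}$ and $\mathbf{1}_{\PP^{1}(F)}$, and its image in $Sp_{S}$ is at most a line. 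To see it is exactly a line, I would check that every $I_{S}(1)$-invariant class of $Sp_{S}$ lifts to an $I_{S}(1)$-invariant function: a lift $f$ produces a continuous homomorphism $I_{S}(1)\to(\FFF_{p},+)$, $n\mapsto n\cdot f-f$ (valued in the constant functions, since $G_{S}$ fixes $\mathbf{1}_{\PP^{1}(F)}$), and this homomorphism vanishes because $I_{S}(1)$ is topologically generated by its two root subgroups and its diagonal part, each of which fixes one of the points $0,\infty\in\PP^{1}(F)$. Thus $Sp_{S}^{I_{S}(1)}=\FFF_{p}\cdot v_{0}$, where $v_{0}$ denotes the class of $\mathbf{1}_{\mathcal{O}_{F}}$.

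It then remains to show that $v_{0}$ generates $Sp_{S}$, i.e.\ that the $G_{S}$-subrepresentation of $C^{\infty}(\PP^{1}(F))$ generated by $\mathbf{1}_{\mathcal{O}_{F}}$ and the constant functions is everything. Applying the unipotent matrices $\begin{pmatrix}1&a\\0&1\end{pmatrix}$ ($a\in F$) and the powers of $\alpha_{0}$ (which acts on $\PP^{1}(F)$ by $x\mapsto\varpi_{F}^{2}x$) to $\mathbf{1}_{\mathcal{O}_{F}}$ produces $\mathbf{1}_{a+\varpi_{F}^{2n}\mathcal{O}_{F}}$ for all $a\in F$ and $n\in\ZZ$; a ball $a+\varpi_{F}^{2n-1}\mathcal{O}_{F}$ of odd ``radius'' is a finite disjoint union of balls of even radius, so its indicator lies in the span as well, hence so does $\mathbf{1}_{U}$ for every compact open $U\subseteq F$ (a finite disjoint union of balls); finally $\mathbf{1}_{U}=\mathbf{1}_{\PP^{1}(F)}-\mathbf{1}_{\PP^{1}(F)\setminus U}$ takes care of the clopen subsets of $\PP^{1}(F)$ containing $\infty$. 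Since indicators of clopen subsets span $C^{\infty}(\PP^{1}(F))$, the claim follows.

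Putting this together: if $V\subseteq Sp_{S}$ is a nonzero subrepresentation, then $V^{I_{S}(1)}\neq 0$ because $I_{S}(1)$ is a pro-$p$-group (Proposition \ref{faitcrucial}), so $V\supseteq Sp_{S}^{I_{S}(1)}=\FFF_{p}\cdot v_{0}$; since $v_{0}$ generates $Sp_{S}$, we get $V=Sp_{S}$. The step requiring the most care is the generation argument: one has to keep track of the ``parity'' already visible in Section \ref{immob} (the action of $G_{S}$ on the vertices of $X$ has the two orbits $X_{p}$ and $X_{imp}$, so, unlike $GL_{2}(F)$, $G_{S}$ does not carry $\mathcal{O}_{F}$ onto every ball), but this is harmless since an odd ball decomposes into a disjoint union of even ones.
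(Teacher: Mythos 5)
Your proof is correct, and its skeleton is the one the paper uses (following \cite[Theorem 28]{BL2}): realise $Sp_{S}$ as $\mathcal{S}(\PP^{1}(F))$ modulo the constants, exploit the pro-$p$-Iwahori via la Proposition \ref{faitcrucial}, and finish with a generation statement. The differences lie in how you establish the two ingredients that the paper imports. For the invariants, the paper cites the carried-over Lemmas 26--27 of \cite{BL2} in the form of the exact sequence of the Lemme \ref{suitex}; you reprove this directly: the double-coset count $\vert B_{S}\backslash G_{S}/I_{S}(1)\vert=2$ gives the $2$-dimensional space upstairs, and your cocycle argument (the map $n\mapsto n\cdot f-f$ is a continuous homomorphism $I_{S}(1)\to\FFF_{p}$ killed by each factor of the Iwahori factorisation, since each fixes $0$ or $\infty$) shows every invariant class of $Sp_{S}$ lifts, so $Sp_{S}^{I_{S}(1)}$ is a line. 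For generation, the paper uses the decomposition $G_{S}=B_{S}I_{S}(1)\sqcup B_{S}\beta_{0}I_{S}(1)$ (Remarque \ref{invrk}) and concludes by a dimension count on the preimage $V$ of the subrepresentation, whereas you show directly that the class of $\mathbf{1}_{\mathcal{O}_{F}}$ generates $Sp_{S}$ by producing all indicators of balls; your bookkeeping there is right: since $G_{S}$ preserves the parity of vertices of $X$, its translates of $\mathbf{1}_{\mathcal{O}_{F}}$ only give the balls $a+\varpi_{F}^{2n}\mathcal{O}_{F}$ (and complements of odd balls), and the reduction of odd balls to disjoint unions of $q$ even ones fills the gap. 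Both routes are sound; yours is more self-contained and makes explicit the parity phenomenon specific to $SL_{2}$, the paper's is shorter by citation.
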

\noindent Pour ce faire, nous allons donner une autre description de $Sp_{S}$ qui nous permettra de conclure par l'\'etude de l'espace des $I_{S}(1)$-invariants de $Sp_{S}$.\\
La d\'ecomposition de Bruhat pour $G_{S}$ implique que le quotient $B_{S} \backslash G_{S}$ est topologiquement isomorphe \`a la droite projective $\PP^{1}(F)$. Par suite, la repr\'esentation $Ind_{B_{S}}^{G_{S}}(1)$ s'identifie \`a l'espace $\mathcal{S}(\PP^{1}(F))$ des fonctions localement constantes sur $\PP^{1}(F)$ \`a valeurs dans $\FFF_{p}$ sur lequel $G_{S}$ agit par translations \`a droite, et $Sp_{S}$ s'identifie au quotient $\mathcal{S}(\PP^{1}(F))/\{\text{constantes}\} \simeq \mathcal{S}(\PP^{1}(F))/ W$ avec $W$ un $\FFF_{p}$-espace vectoriel de dimension 1 sur lequel $I_{S}(1)$ agit trivialement.\\ 

\noindent Les preuves des Lemmes 26 et 27 de \cite{BL2} sont encore valables pour $G_{S}$ et $I_{S}$, ce qui fournit directement l'\'enonc\'e suivant.

\begin{lm}
On dispose de la suite exacte courte suivante de $I_{S}(1)$-modules : 
$$0 \longrightarrow W  \longrightarrow (\mathcal{S}(\PP^{1}(F)))^{I_{S}(1)} \longrightarrow (Sp_{S})^{I_{S}(1)} \longrightarrow 0  \ .$$
\label{suitex}
\end{lm}
\begin{NB}
La d\'ecomposition $G_{S} = B_{S}I_{S}(1) \sqcup B_{S}\beta_{0}I_{S}(1)$ permet de nouveau de d\'emontrer que le $G_{S}$-module $\mathcal{S}(\PP^{1}(F))$ est engendr\'e par le sous-espace de dimension 2 form\'e par ses vecteurs $I_{S}(1)$-invariants.
\label{invrk}
\end{NB}

\noindent \textbf{Preuve du Th\'eor\`eme \ref{irredsp} :}
Soit $\pi$ une sous-repr\'esentation non nulle de $Sp_{S}$. La Proposition \ref{faitcrucial} appliqu\'ee au pro-$p$-groupe $I_{S}(1)$ assure que l'on a $\dim_{\FFF_{p}} \pi^{I_{S}(1)} \geq 1$. Notons $V$ le rel\`evement de $\pi$ \`a $\mathcal{S}(\PP^{1}(F))$. Le Lemme \ref{suitex} assure alors que l'on a la cha\^ine d'in\'egalit\'es suivante : 
$$ 2 = \dim_{\FFF_{p}} (\mathcal{S}(\PP^{1}(F)))^{I_{S}(1)} \geq \dim_{\FFF_{p}} V^{I_{S}(1)} = 1 + \dim_{\FFF_{p}} \pi^{I_{S}(1)} \geq 2 \ ,$$
qui implique que $\dim_{\FFF_{p}} V^{I_{S}(1)} = 2$, donc que $V$ poss\`ede les m\^emes $I_{S}(1)$-invariants que $\mathcal{S}(\PP^{1}(F))$. La Remarque \ref{invrk} implique alors que $V = \mathcal{S}(\PP^{1}(F))$ et donc que $\pi = Sp_{S}$, ce qui prouve le Th\'eor\`eme \ref{irredsp}.\\

Pour terminer cette section, signalons que nous disposons du r\'esultat suivant, qui est l'analogue de \cite[Theorem 29]{BL2} pour $Sp_{S}$.
\begin{thm}
La repr\'esentation $Sp_{S}$ est le seul sous-quotient non trivial de $Ind_{B_{S}}^{G_{S}}(\mathbf{1})$.
\label{seulquo}
\end{thm}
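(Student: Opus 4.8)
The plan is to identify $Ind_{B_S}^{G_S}(\mathbf{1})$ with $\mathcal{S}(\PP^{1}(F))$, to view it as an extension of $Sp_S$ by the trivial character, and to reduce the statement to the \emph{non-splitting} of that extension, which I then establish by an explicit invariant ``measure'' argument on $\PP^{1}(F)$. Under the identification $Ind_{B_S}^{G_S}(\mathbf{1}) \simeq \mathcal{S}(\PP^{1}(F))$ used above we have the short exact sequence of $G_S$-modules
$$ 0 \longrightarrow W \longrightarrow \mathcal{S}(\PP^{1}(F)) \longrightarrow Sp_S \longrightarrow 0 \ , $$
where $W$ is the line of constant functions (so $W \simeq \mathbf{1}$) and $Sp_S$ is irreducible by Theorem \ref{irredsp}. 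Since $W$ is one-dimensional, $Ind_{B_S}^{G_S}(\mathbf{1})$ has length exactly $2$, with constituents $\mathbf{1}$ and $Sp_S$; hence any subquotient distinct from $\{0\}$ and from the whole representation is isomorphic to $\mathbf{1}$ or to $Sp_S$. Moreover a $G_S$-stable complement of $W$ in $\mathcal{S}(\PP^{1}(F))$ would map isomorphically onto $Sp_S$, and conversely any embedding $Sp_S \hookrightarrow \mathcal{S}(\PP^{1}(F))$ would have image complementary to $W$ for length reasons. So the theorem is equivalent to the assertion that the displayed sequence does not split, i.e. that $W$ is the unique nonzero proper subrepresentation of $Ind_{B_S}^{G_S}(\mathbf{1})$.

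A splitting of that sequence is the same datum as a $G_S$-equivariant retraction $r : \mathcal{S}(\PP^{1}(F)) \to W$. Identifying $W$ with $\FFF_{p}$ carrying the trivial $G_S$-action, such an $r$ is precisely a $G_S$-invariant linear form $\mu$ on $\mathcal{S}(\PP^{1}(F))$ with $\mu(\mathbf{1}_{\PP^{1}(F)}) \neq 0$, where $\mathbf{1}_{\PP^{1}(F)}$ is the indicator of all of $\PP^{1}(F)$, i.e. the constant function with value $1$. So it suffices to show that every $G_S$-invariant linear form $\mu$ on $\mathcal{S}(\PP^{1}(F))$ vanishes on the constants. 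Writing an element of $\mathcal{S}(\PP^{1}(F))$ as a finite $\FFF_{p}$-combination of indicator functions of compact open subsets of $\PP^{1}(F)$, invariance of $\mu$ reads $\mu(\mathbf{1}_{g\Omega}) = \mu(\mathbf{1}_{\Omega})$ for every $g \in G_S$ and every compact open $\Omega$.

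The core computation is the following. For $k \in \ZZ$, put $\Omega_k := \varpi_F^{k}\mathcal{O}_F$, seen inside the affine chart $F \subset \PP^{1}(F)$. Decompose $\Omega_k$ into the $q^{2}$ cosets of $\varpi_F^{k+2}\mathcal{O}_F$ in $\varpi_F^{k}\mathcal{O}_F$; each is a translate of $\Omega_{k+2}$ by an element of $U \subset G_S$, and $\Omega_{k+2}$ is the image of $\Omega_k$ under $\alpha_0^{\pm1}$, since $\alpha_0 = \mathrm{diag}(\varpi_F, \varpi_F^{-1}) \in G_S$ dilates the coordinate on $F$ by $\varpi_F^{\pm2}$. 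Linearity and $G_S$-invariance of $\mu$ then give
$$ \mu(\mathbf{1}_{\Omega_k}) \;=\; q^{2}\,\mu(\mathbf{1}_{\Omega_{k+2}}) \;=\; q^{2}\,\mu(\mathbf{1}_{\Omega_{k}}) \;=\; 0 \ , $$
since $q^{2} = p^{2f} = 0$ in $\FFF_{p}$; applying the same after an arbitrary translation, $\mu$ vanishes on every ball contained in the chart $F$. Finally, the complement $\PP^{1}(F) \setminus \Omega_0$ is again a compact open ball — it avoids the point $0$, so some element of $G_S$ (e.g. $s$ or $s^{-1}$) carries it into $F$, where it becomes a translate of some $\Omega_j$ — whence $\mu$ vanishes on it, and $\mu(\mathbf{1}_{\PP^{1}(F)}) = \mu(\mathbf{1}_{\Omega_0}) + \mu(\mathbf{1}_{\PP^{1}(F) \setminus \Omega_0}) = 0$, as wanted.

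Therefore the sequence does not split: the lattice of subrepresentations of $Ind_{B_S}^{G_S}(\mathbf{1})$ is $\{0\} \subset W \subset Ind_{B_S}^{G_S}(\mathbf{1})$, so that its subquotients are, up to isomorphism, $\{0\}$, $\mathbf{1}$, $Sp_S$ and $Ind_{B_S}^{G_S}(\mathbf{1})$, and $Sp_S$ is the only one of these distinct from $\{0\}$, from the trivial character, and from $Ind_{B_S}^{G_S}(\mathbf{1})$ itself, which is the assertion of the theorem. The main obstacle is the last computation, namely checking that the subdivision into $q^{2}$ pieces followed by the dilation $\alpha_0$ lands back on $\Omega_k$; the feature specific to $SL_{2}(F)$, that the diagonal torus dilates only by even powers of $\varpi_F$ (which is why one must subdivide into $q^{2}$ rather than $q$ pieces), causes no trouble since $q^{2} \equiv 0 \pmod p$. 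For $p$ odd one could alternatively derive the non-splitting from the $GL_{2}$ case via Theorem \ref{dejairred}(3): a $G_S$-equivariant retraction of $Ind_{B}^{G}(\mathbf{1})|_{G_S} = Ind_{B_S}^{G_S}(\mathbf{1})$ onto the constants is automatically $Z$-equivariant, and averaging it over the finite group $G/G_SZ \simeq F^{\times}/(F^{\times})^{2}$, whose order is prime to $p$, would produce a $G$-equivariant retraction, contradicting \cite[Theorem 29]{BL2}; this variant fails when $p = 2$, whereas the measure-theoretic argument above does not.
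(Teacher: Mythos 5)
Your proposal is correct and follows essentially the same route as the paper: reduce, via the length-two filtration, to the non-splitting of $0 \to \mathbf{1} \to Ind_{B_{S}}^{G_{S}}(\mathbf{1}) \to Sp_{S} \to 0$, then rule out a splitting by showing that every $G_{S}$-invariant linear form on $\mathcal{S}(\PP^{1}(F))$ vanishes, through a mod-$p$ count of a ball cut into $G_{S}$-translates of a smaller ball. Your version of the count (the $q^{2}$ cosets of $\varpi_{F}^{k+2}\mathcal{O}_{F}$, moved by $u(a)$ and $\alpha_{0}$ so as to stay in a single $G_{S}$-orbit of balls) is merely a slightly more explicit rendering of the paper's computation, which packages the translation and the dilation into the matrices $\left(\begin{smallmatrix} \varpi_{F} & x \\ 0 & \varpi_{F}^{-1} \end{smallmatrix}\right)$ and uses $q \equiv 0 \bmod p$.
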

\begin{proof}
On suit de nouveau le raisonnement de Barthel-Livn\'e : comme $Ind_{B_{S}}^{G_{S}}(\mathbf{1})$ est une repr\'esentation de $G_{S}$ de longueur 2, il nous suffit de prouver que la suite exacte 
$$ 0 \longrightarrow \mathbf{1} \longrightarrow Ind_{B_{S}}^{G_{S}}(\mathbf{1}) \longrightarrow Sp_{S} \longrightarrow 0$$
est non scind\'ee pour conclure.\\
Si $\ell$ est une fonction $G_{S}$-invariante sur $\mathcal{S}(\PP^{1}(F))$, elle v\'erifie  
$$ \begin{array}{rcll} \ell(\mathbf{1}_{\mathcal{O}_{0}}) & = & \ell(\mathbf{1}_{\mathcal{O}_{F}}) & \\
& = & \displaystyle \sum_{x \in k_{F}} \ell(\mathbf{1}_{x + \varpi_{F}\mathcal{O}_{F}}) & \\
& = & \displaystyle \sum_{x \in k_{F}} \ell(\left( \begin{array}{cc} \varpi_{F} & x \\ 0 & \varpi_{F}^{-1}\end{array}\right) \mathbf{1}_{\mathcal{O}_{F}}) & \\
& = & \displaystyle q  \ell(\mathbf{1}_{\mathcal{O}_{F}}) & \text{ par $G_{S}$-invariance de }\ell \\
& = & 0 & \text{ car $q =$ card $k_{F}$ est nul modulo $p$.}
\end{array}$$
Comme on a aussi
$$ \ell(\mathbf{1}_{\mathcal{O}_{\infty}}) = \ell(\beta_{0}\mathbf{1}_{{O}_{0}}) = \ell(\mathbf{1}_{\mathcal{O}_{0}}) = 0 \ ,$$
on obtient que $\ell$ est identiquement nulle puisque $\mathbf{1}_{\mathcal{O}_{\infty}}$ et $\mathbf{1}_{\mathcal{O}_{0}}$ engendrent $\mathcal{S}(\PP^{1}(F))$.
\end{proof}

\subsection{Preuve du Th\'eor\`eme \ref{dejairred}}
On rappelle tout d'abord le Th\'eor\`eme suivant d\^u \`a Barthel et Livn\'e \cite[Theorem 30]{BL1}.\footnote{Avec les notations de  \cite[Theorem 30]{BL1}, on a ici $\eta = \chi_{1}\chi_{2}^{-1}$. On peut se limiter \`a consid\'erer ce cas, quitte \`a tordre par le caract\`ere $\chi_{2} \circ \det$ (ce qui ne change pas les propri\'et\'es d'irr\'eductibilit\'e).}
\begin{thm}
Soit $\eta : F^{\times} \to \FFF_{p}^{\times}$ un caract\`ere lisse.
\begin{enumerate}
\item Si $\eta$ est non trivial, alors $Ind_{B}^{G}(\eta \otimes \mathbf{1})$ est une repr\'esentation irr\'eductible de $G$.
\item La repr\'esentation $Ind_{B}^{G}(\mathbf{1})$ est de longueur 2; elle contient comme unique sous-objet la repr\'esentation triviale, et son quotient est isomorphe \`a la repr\'esentation sp\'eciale $Sp$.
\end{enumerate}
\label{BLbis}
\end{thm}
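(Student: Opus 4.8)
Ce r\'esultat \'etant d\^u \`a Barthel et Livn\'e, nous nous bornons \`a en rappeler la strat\'egie, qui pr\'efigure celle des Th\'eor\`emes \ref{sppal} et \ref{irredsp} pour $SL_{2}(F)$.

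Pour l'assertion (1), on partirait d'une sous-repr\'esentation non nulle $\pi$ de $Ind_{B}^{G}(\eta \otimes \mathbf{1})$. La Proposition \ref{faitcrucial} appliqu\'ee au pro-$p$-Iwahori $I(1)$ garantit que $\pi^{I(1)} \neq \{0\}$. Le premier point \`a \'etablir est que $(Ind_{B}^{G}(\eta \otimes \mathbf{1}))^{I(1)}$ est un $\FFF_{p}$-espace vectoriel de dimension $2$, cons\'equence d'une d\'ecomposition d'Iwahori--Bruhat de $B\backslash G/I(1)$ en deux classes (de la m\^eme mani\`ere que $(\mathcal{S}(\PP^{1}(F)))^{I_{S}(1)}$ est de dimension $2$ pour $SL_{2}(F)$). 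Le second point, qui est le c\oe ur de l'argument, est que la sous-$G$-repr\'esentation engendr\'ee par un quelconque vecteur non nul de cet espace de dimension $2$ est $Ind_{B}^{G}(\eta \otimes \mathbf{1})$ tout entier : c'est ici que servira l'hypoth\`ese $\eta \neq \mathbf{1}$, via l'action des op\'erateurs de l'alg\`ebre de Hecke--Iwahori pro-$p$ sur ces deux vecteurs, action qui rend ce module irr\'eductible exactement lorsque $\eta$ est non trivial. On conclurait alors $\pi = Ind_{B}^{G}(\eta \otimes \mathbf{1})$. Une variante plus exp\'editive consiste \`a invoquer l'isomorphisme $Ind_{B}^{G}(\eta \otimes \mathbf{1})|_{G_{S}} \simeq Ind_{B_{S}}^{G_{S}}(\eta)$ (valable d\`es lors que $G = BG_{S}$ et $B \cap G_{S} = B_{S}$, cf. Th\'eor\`eme \ref{dejairred}) : comme $\eta$ est non trivial, le Th\'eor\`eme \ref{sppal} assure l'irr\'eductibilit\'e de cette restriction comme $G_{S}$-repr\'esentation, donc a fortiori celle de $Ind_{B}^{G}(\eta \otimes \mathbf{1})$ comme $G$-repr\'esentation.

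Pour l'assertion (2), on identifierait $Ind_{B}^{G}(\mathbf{1})$ \`a l'espace $\mathcal{S}(\PP^{1}(F))$ des fonctions localement constantes sur $\PP^{1}(F) \simeq B\backslash G$, dont le sous-espace des fonctions constantes est la sous-repr\'esentation triviale $\mathbf{1}$, le quotient correspondant \'etant not\'e $Sp$. L'irr\'eductibilit\'e de $Sp$ se d\'emontrerait exactement comme celle de $Sp_{S}$ (Th\'eor\`eme \ref{irredsp}), par le comptage des $I(1)$-invariants. Comme $\mathbf{1}$ et $Sp$ sont irr\'eductibles, $Ind_{B}^{G}(\mathbf{1})$ est de longueur $2$ ; il resterait alors \`a montrer que la suite exacte
$$ 0 \longrightarrow \mathbf{1} \longrightarrow Ind_{B}^{G}(\mathbf{1}) \longrightarrow Sp \longrightarrow 0 $$
ne se scinde pas, ce qui forcera $\mathbf{1}$ \`a \^etre l'unique sous-objet propre de $Ind_{B}^{G}(\mathbf{1})$. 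Pour cela, on reprendrait le calcul de moyenne effectu\'e dans la preuve du Th\'eor\`eme \ref{seulquo} : toute forme lin\'eaire $G$-invariante $\ell$ sur $\mathcal{S}(\PP^{1}(F))$ v\'erifie $\ell(\mathbf{1}_{\mathcal{O}_{F}}) = q\,\ell(\mathbf{1}_{\mathcal{O}_{F}})$ (en d\'ecoupant $\mathbf{1}_{\mathcal{O}_{F}}$ le long d'un recouvrement par $q$ translat\'es), donc est nulle sur $\mathbf{1}_{\mathcal{O}_{F}}$ puisque $q = \text{card}\, k_{F}$ est nul modulo $p$, puis identiquement nulle car ces translat\'es engendrent $\mathcal{S}(\PP^{1}(F))$.

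Le point le plus d\'elicat est celui signal\'e dans (1) : d\'ecrire le module port\'e par $(Ind_{B}^{G}(\eta \otimes \mathbf{1}))^{I(1)}$ sur l'alg\`ebre de Hecke--Iwahori et \'etablir qu'il engendre toute la repr\'esentation, car c'est la seule \'etape o\`u l'hypoth\`ese $\eta \neq \mathbf{1}$ intervient v\'eritablement --- la variante via $SL_{2}(F)$ ne faisant que d\'eplacer cette difficult\'e dans la preuve du Th\'eor\`eme \ref{sppal}.
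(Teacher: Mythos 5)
Le papier ne d\'emontre pas ce th\'eor\`eme : il est simplement cit\'e d'apr\`es \cite[Theorem 30]{BL1} (la partie concernant $Sp$ reposant sur les Theorems 28 et 29 de \cite{BL2}), de sorte qu'il n'y a pas de preuve interne \`a laquelle comparer votre esquisse. Celle-ci est correcte dans ses grandes lignes et suit fid\`element la strat\'egie de Barthel--Livn\'e, que le papier adapte pr\'ecis\'ement \`a $SL_{2}(F)$ (Annexe \ref{spBL} pour le point (1), Th\'eor\`emes \ref{irredsp} et \ref{seulquo} pour le point (2)) ; en particulier, votre argument de non-scindage par annulation des formes lin\'eaires invariantes est exactement celui de la preuve du Th\'eor\`eme \ref{seulquo}. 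Deux remarques. D'une part, votre << variante exp\'editive >> du point (1) --- d\'eduire l'irr\'eductibilit\'e de $Ind_{B}^{G}(\eta \otimes \mathbf{1})$ de celle de $Ind_{B_{S}}^{G_{S}}(\eta)$ --- constitue un chemin r\'eellement diff\'erent de celui de Barthel--Livn\'e, et il n'est pas circulaire dans l'\'economie du papier puisque le Th\'eor\`eme \ref{sppal} y est d\'emontr\'e (Annexes \ref{spBL} et \ref{sppaljac}) sans recours au Th\'eor\`eme \ref{BLbis} ; veillez cependant \`a ne pas citer le Th\'eor\`eme \ref{dejairred} tel quel, son hypoth\`ese (<< $\eta$ d\'efinit une s\'erie principale de $G$ >>) pr\'esupposant justement la conclusion cherch\'ee : il faut invoquer la Proposition \ref{propcleG} (injectivit\'e de la restriction), puis l'irr\'eductibilit\'e de la cible (Th\'eor\`eme \ref{sppal}), qui force la surjectivit\'e ; tout sous-$G$-module non nul de $Ind_{B}^{G}(\eta \otimes \mathbf{1})$ est alors un sous-$G_{S}$-module non nul, donc tout l'espace. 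Cette variante \'evite de refaire pour $GL_{2}$ la discussion ramifi\'e/non ramifi\'e de l'alg\`ebre de Hecke--Iwahori, au prix de l'avoir men\'ee pour $SL_{2}$, et elle renverse l'ordre logique du papier. D'autre part, en tant qu'esquisse, les deux points techniques que vous signalez vous-m\^eme (dimension $2$ des $I(1)$-invariants et engendrement de toute l'induite par n'importe quel vecteur invariant non nul lorsque $\eta \not= \mathbf{1}$) restent \`a v\'erifier ; ils constituent pr\'ecis\'ement le contenu de \cite{BL1, BL2}, ce qui est acceptable pour un \'enonc\'e que le papier se contente de rappeler.
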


\noindent Pour d\'emontrer le Th\'eor\`eme \ref{dejairred}, on remarque tout d'abord qu'il existe un plongement naturel des induites paraboliques de $G$ dans celles de $G_{S}$.
\begin{prop}
Soit $\eta : F^{\times} \to \FFF_{p}^{\times}$ un caract\`ere lisse. L'application naturelle de restriction induit un plongement $G_{S}$-\'equivariant 
$$ Ind_{B}^{G}(\eta \otimes \mathbf{1}) \hookrightarrow Ind_{B_{S}}^{G_{S}}(\eta) \ .$$
\label{propcleG}
\end{prop}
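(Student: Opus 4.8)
The plan is to construct the embedding directly from the restriction map on functions. An element of $Ind_{B}^{G}(\eta \otimes \mathbf{1})$ is a locally constant function $f : G \to \FFF_{p}$ satisfying $f(bg) = (\eta \otimes \mathbf{1})(b) f(g)$ for all $b \in B$, $g \in G$; I simply restrict such an $f$ to $G_{S} \subset G$. Since $B_{S} = B \cap G_{S}$ and the character $\eta \otimes \mathbf{1}$ of $B$ restricts on $B_{S}$ to the character called $\eta$ in Lemma \ref{caracbs} (a diagonal matrix $\mathrm{diag}(\lambda, \lambda^{-1})$ is sent to $\eta(\lambda)$, which is exactly $(\eta \otimes \mathbf{1})$ evaluated there), the restriction $f|_{G_{S}}$ lies in $Ind_{B_{S}}^{G_{S}}(\eta)$. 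Local constancy is inherited, and $G_{S}$-equivariance is immediate because the $G$-action on both sides is by right translation and $G_{S}$ is a subgroup; so the map $f \mapsto f|_{G_{S}}$ is a well-defined $G_{S}$-equivariant linear map.

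It remains to check injectivity, which is the only substantive point. Suppose $f|_{G_{S}} = 0$; I must show $f = 0$ on all of $G$. The key is the decomposition $G = B \cdot G_{S}$, i.e. that $G_{S}Z$ — and in fact $B G_{S}$ — exhausts $G$: given $g \in GL_{2}(F)$ with $\det g = d$, pick $t = \mathrm{diag}(d, 1) \in B$, so that $t^{-1} g \in SL_{2}(F) = G_{S}$, whence $g = t \cdot (t^{-1}g)$ with $t \in B$ and $t^{-1}g \in G_{S}$. Then $f(g) = (\eta \otimes \mathbf{1})(t) \, f(t^{-1}g) = \eta(d) \cdot 0 = 0$. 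Hence $f \equiv 0$, and the restriction map is injective, giving the desired $G_{S}$-equivariant embedding $Ind_{B}^{G}(\eta \otimes \mathbf{1}) \hookrightarrow Ind_{B_{S}}^{G_{S}}(\eta)$.

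The main (minor) obstacle is bookkeeping the identification of characters: one must be careful that the character of $B$ written $\eta \otimes \mathbf{1}$ really does restrict to the character of $B_{S}$ denoted $\eta$, which is exactly the content of the Remarque following Lemma \ref{caracbs} (restricting $\eta_1 \otimes \eta_2$ to $B_S$ yields $\eta_1 \eta_2^{-1}$, here with $\eta_2 = \mathbf{1}$). Beyond that, everything is formal; the surjectivity/cokernel analysis needed to upgrade this embedding to the isomorphism of Theorem \ref{dejairred}(1) is a separate matter, presumably handled afterwards by a dimension or $I_S(1)$-invariants count, and is not required for the present proposition.
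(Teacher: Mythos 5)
Your proposal is correct and follows essentially the same route as the paper: the restriction map $f \mapsto f|_{G_{S}}$, the observation that $G_{S}$-equivariance is automatic since both actions are by right translation, and injectivity via the decomposition $G = BG_{S}$ using the factorization $g = \mathrm{diag}(\det g, 1)\cdot g_{s}$ with $g_{s} \in G_{S}$. The extra care you take with the identification of $(\eta \otimes \mathbf{1})|_{B_{S}}$ with $\eta$ is a worthwhile precision but does not change the argument.
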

\begin{proof}
Soit $f \in Ind_{B}^{G}(\eta \otimes  \mathbf{1})$. Sa restriction \`a $G_{S}$ est alors une fonction lisse qui satisfait :
$$ \forall \ b \in B_{S}, \ \forall g \in G_{S}, \ f(bg) = \eta(b) f(g) \ , $$
de sorte qu'elle d\'efinit bien un \'el\'ement de $Ind_{B_{S}}^{G_{S}}(\eta)$. De plus, l'application de restriction est clairement $G_{S}$-\'equivariante puisque $G_{S}$ agit sur les deux espaces de la m\^eme mani\`ere. Reste donc \`a prouver son injectivit\'e, qui provient de la d\'ecomposition $G = BG_{S}$. En effet, si $f \in Ind_{B}^{G}(\eta)$ est de restriction \`a $G_{S}$ identiquement nulle et si $g \in G$ se d\'ecompose sous la forme
$$ g = \left( \begin{array}{cc} u & 0 \\ 0 & 1\end{array}\right) g_{s} $$ 
avec $u := \det g \in F^{\times}$ et $g_{s} \in G_{S}$, on a alors 
$$ f(g) = f(ug_{s}) = \eta(u) f(g_{s}) = 0 \ ,$$
ce qui montre que $f$ est identiquement nulle et termine la d\'emonstration.
\end{proof}

\begin{cor}
Le premier point du Th\'eor\`eme \ref{dejairred} est vrai.
\end{cor}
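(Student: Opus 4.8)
The plan is to deduce the statement immediately from the embedding of Proposition~\ref{propcleG} together with the two irreducibility results already at our disposal. First I would unwind the hypothesis: saying that $\eta : F^{\times} \to \FFF_{p}^{\times}$ defines a representation in principal series of $G$ means exactly that $Ind_{B}^{G}(\eta \otimes \mathbf{1})$ is irreducible as a $G$-representation, and by Theorem~\ref{BLbis} this forces $\eta$ to be a nontrivial character of $F^{\times}$, hence (via Lemma~\ref{caracbs}) a nontrivial smooth character of $B_{S}$.

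Next I would apply Theorem~\ref{sppal} to this nontrivial $\eta$: it gives that $Ind_{B_{S}}^{G_{S}}(\eta)$ is an irreducible smooth representation of $G_{S}$. Now Proposition~\ref{propcleG} furnishes a $G_{S}$-equivariant embedding $Ind_{B}^{G}(\eta \otimes \mathbf{1}) \hookrightarrow Ind_{B_{S}}^{G_{S}}(\eta)$. Its image is a $G_{S}$-subrepresentation of an irreducible representation, and it is nonzero since $Ind_{B}^{G}(\eta \otimes \mathbf{1})$ is nonzero (indeed irreducible) as a $G$-representation and $G_{S}$ acts on the same underlying space; hence the image is the whole of $Ind_{B_{S}}^{G_{S}}(\eta)$. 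Therefore the restriction map is an isomorphism of $G_{S}$-representations, and it is natural because the restriction of functions is. This is precisely the first point of Theorem~\ref{dejairred}, so the corollary follows.

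I do not expect any genuine obstacle in this argument: the substantive input is the irreducibility of $Ind_{B_{S}}^{G_{S}}(\eta)$ from Theorem~\ref{sppal} (and, on the $G$-side, Theorem~\ref{BLbis}), after which everything reduces to the trivial remark that a nonzero subobject of an irreducible object is the whole object. The only point deserving a word of care is the bookkeeping around the hypothesis ``$\eta$ defines a principal series representation'', i.e.\ that this is equivalent to $\eta \neq \mathbf{1}$; once that equivalence is made explicit, the proof is a one-line application of the embedding.
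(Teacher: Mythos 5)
Your argument is correct and is essentially the paper's own proof: both use the embedding of Proposition~\ref{propcleG} and the irreducibility of $Ind_{B_{S}}^{G_{S}}(\eta)$ from Th\'eor\`eme~\ref{sppal} (with $\eta$ nontrivial, as the principal-series hypothesis forces via Th\'eor\`eme~\ref{BLbis}) to conclude that the nonzero image must be everything. Your extra bookkeeping on the equivalence ``$\eta$ defines a principal series $\Leftrightarrow$ $\eta \neq \mathbf{1}$'' is just a more explicit rendering of what the paper leaves implicit.
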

\begin{proof}
La repr\'esentation $Ind_{B}^{G}(\eta)$ est une repr\'esentation non nulle de $G_{S}$ et, sous les hypoth\`eses demand\'ees, $Ind_{B_{S}}^{G_{S}}(\eta)$ est une repr\'esentation irr\'eductible de $G_{S}$. Le plongement de la Proposition \ref{propcleG} est donc n\'ecessairement un isomorphisme de $G_{S}$-repr\'esentations. 
\end{proof}

Nous d\'emontrons maintenant les deux derniers points du Th\'eor\`eme \ref{dejairred}. D'apr\`es le second point du Th\'eor\`eme \ref{BLbis}, la restriction \`a $G_{S}$ de $Ind_{B}^{G}(\mathbf{1})$ contient la repr\'esentation triviale de $G_{S}$. Du coup, la Proposition \ref{propcleG} fournit naturellement un morphisme $G_{S}$-\'equivariant non nul
\begin{equation}
Ind_{B}^{G}(\mathbf{1}) / \mathbf{1} \to Ind_{B_{S}}^{G_{S}}(\mathbf{1}) / \mathbf{1} \ .
\label{morphsp}
\end{equation}
Le membre de gauche de ce morphisme est la repr\'esentation sp\'eciale $Sp$ de $G$, dont la restriction \`a $G_{S}$ est non nulle \cite[Preuve de la Proposition 32]{BL2}, tandis que le membre de droite est la repr\'esentation sp\'eciale $Sp_{S}$ de $G_{S}$ dont on a prouv\'e l'irr\'eductibilit\'e au Th\'eor\`eme \ref{irredsp}. Ceci implique que le morphisme (\ref{morphsp}) est un isomorphisme de $G_{S}$-repr\'esentations, ce qui prouve le second point du Th\'eor\`eme \ref{dejairred}. Le troisi\`eme point en s'en d\'eduit alors directement par application du Lemme des cinq au diagramme \`a lignes exactes suivant : 
  $$ \xymatrix{
  0\ar[r] & \mathbf{1} \ar[r] \ar[d]^{\simeq} &  Ind_{B}^{G}( \mathbf{1})\ar[r] \ar[d] &  Sp\ar[r] \ar[d]^{\simeq}& 0\\
  0\ar[r] &  \mathbf{1} \ar[r] & Ind_{B_{S}}^{G_{S}}( \mathbf{1}) \ar[r] &  Sp_{S}\ar[r] & 0
  } $$
 
 \subsection{Quelques cons\'equences utiles}
Nous donnons ici quelques cons\'equences utiles du Th\'eor\`eme \ref{dejairred} qui permettent notamment d'obtenir ce qui nous manque pour prouver le Th\'eor\`eme \ref{nonssg}. Avant cela, on rappelle un fait simple mais important qui d\'ecoule directement de la d\'efinition des repr\'esentations supersinguli\`eres : \emph{toute repr\'esentation de $G_{S}$ isomorphe \`a une repr\'esentation supersinguli\`ere est elle-m\^eme supersinguli\`ere}.
 \begin{cor}
 Les repr\'esentations de la s\'erie principale de $G_{S}$ et la repr\'esentation $Sp_{S}$ ne sont pas supersinguli\`eres.
 \label{classnonssg}
 \end{cor}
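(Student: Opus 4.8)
The plan is to use the classification of supersingular representations of $G_{S}$ as (irreducible quotients of) the cokernels $\mathrm{c\text{-}ind}_{K_{S}Z_{S}}^{G_{S}}(\sigma_{\vec r})/\tau_{\vec r}(\cdots)$ together with the concrete realizations of the non-supersingular representations provided by Theorem \ref{dejairred}. The key structural input is that a supersingular representation, by definition, is a quotient of a representation on which the Hecke operator $\tau_{\vec r}$ acts as zero, whereas on a principal series or on $Sp_{S}$ the analogous operator acts invertibly (indeed, as multiplication by a nonzero scalar up to the $GL_2$-picture). So the heart of the matter is to transport the Barthel--Livn\'e computation of the Hecke action on the $GL_{2}(F)$-side to the $SL_{2}(F)$-side via the isomorphisms of Theorem \ref{dejairred}.

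First I would recall, from \cite{BL1, BL2}, that each principal series $Ind_{B}^{G}(\eta\otimes\mathbf 1)$ with $\eta$ nontrivial and the special representation $Sp$ of $G$ arise as $\pi(\vec r,\lambda,1)$ for a suitable weight $\vec r$ and a \emph{nonzero} eigenvalue $\lambda$ of $T_{\vec r}$ (this is precisely how Barthel--Livn\'e separate the non-supersingular representations from the supersingular ones: supersingular means $\lambda = 0$, everything else has $\lambda\neq 0$). Since $\tau_{\vec r}$ acts on the functions $[g,v]$ exactly as $T_{\vec r}^{2}$ does, on such a representation $\tau_{\vec r}$ acts through the scalar $\lambda^{2}\neq 0$; in particular $\tau_{\vec r}$ is surjective on it, so it cannot be a quotient of any $\mathrm{c\text{-}ind}_{K_{S}Z_{S}}^{G_{S}}(\sigma_{\vec r})/\tau_{\vec r}(\cdots)$, which by construction is annihilated by $\tau_{\vec r}$. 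The compatibility between $\mathrm{c\text{-}ind}_{K Z}^{G}(\sigma_{\vec r})$ and $\mathrm{c\text{-}ind}_{K_{S}Z_{S}}^{G_{S}}(\sigma_{\vec r})$ is exactly the content of Proposition \ref{decoupe} and the description of the Serre weights of $K_{S}$ as restrictions of the $\sigma_{\vec r}$, so the eigenvalue computation descends.

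Concretely, the steps are: (i) by Theorem \ref{dejairred}(1), a principal series $Ind_{B_{S}}^{G_{S}}(\eta)$ with $\eta$ nontrivial is isomorphic to $Ind_{B}^{G}(\eta\otimes\mathbf 1)$ as a $G_{S}$-representation, and by Theorem \ref{dejairred}(2), $Sp_{S}\simeq Sp$ as a $G_{S}$-representation; (ii) since being supersingular is preserved under isomorphism (the fact recalled just before the statement), it suffices to show that the $G_{S}$-representations underlying $Ind_{B}^{G}(\eta\otimes\mathbf 1)$ and $Sp$ are not supersingular; (iii) for this, observe that if one of them were a quotient of $\mathrm{c\text{-}ind}_{K_{S}Z_{S}}^{G_{S}}(\sigma_{\vec r})/\tau_{\vec r}(\cdots)$, then $\tau_{\vec r}$ would annihilate it, hence $T_{\vec r}^{2}$ would annihilate the corresponding $G$-representation (using the identification of the $SL_2$- and $GL_2$-Hecke actions on the functions $[g,v]$), so $T_{\vec r}$ would have only the eigenvalue $0$ on it; but Barthel--Livn\'e show that $Ind_{B}^{G}(\eta\otimes\mathbf 1)$ (nontrivial $\eta$) and $Sp$ occur only with a nonzero $T_{\vec r}$-eigenvalue — contradiction. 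The main obstacle I anticipate is bookkeeping: one must be careful that the Serre weight $\vec r$ for which $Ind_{B_{S}}^{G_{S}}(\eta)$ or $Sp_{S}$ could a priori be a quotient of the $\tau_{\vec r}$-cokernel is the \emph{same} as, or compatible with, the weight on the $GL_{2}$-side under restriction $\sigma_{\vec r}\big|_{SL_2(k_F)}$, and that the operator $\tau_{\vec r}$ really matches $T_{\vec r}^{2}$ on the relevant induced spaces — both of which are supplied by \cite{Moivieux, Jey} and the remarks following Proposition \ref{decoupe}, but need to be invoked explicitly.
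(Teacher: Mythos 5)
Your proposal is correct and takes essentially the same route as the paper: Theorem \ref{dejairred} transfers the question to the restrictions of the non-supersingular $GL_{2}(F)$-representations $\pi(\vec{r},\lambda,1)$ with $\lambda\neq 0$, and the identification of the action of $\tau_{\vec{r}}$ with that of $T_{\vec{r}}^{2}$ shows that the relevant Hecke eigenvalue is a nonzero square, so these representations cannot be quotients of the $\tau_{\vec{r}}$-cokernels. The only differences are cosmetic: the paper writes $\eta=\mu_{\lambda}\omega^{\vec{r}}$ and uses the explicit isomorphism $Ind_{B_{S}}^{G_{S}}(\eta)\simeq \pi(\vec{r},\lambda^{-1/2},1)\vert_{G_{S}}$ from \cite[Theorem 30 (3)]{BL1} (getting the eigenvalue $\lambda^{-1}$), treating $Sp_{S}$ ``de mani\`ere analogue'', whereas you leave the eigenvalue unspecified but nonzero and note (correctly) that $Sp$ occurs as a subquotient of some $\pi(\vec{r},\lambda,1)$ with $\lambda\neq 0$.
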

 \begin{proof}
 La preuve de ce r\'esultat repose sur le Th\'eor\`eme 30 (3) de \cite{BL1} et sur le fait que les actions de $T_{\vec{r}}^{2} \in \mathcal{H}_{G}(KZ, \sigma_{\vec{r}})$ et de $\tau_{\vec{r}} \in \mathcal{H}_{G_{S}}(K_{S}Z_{S}, \sigma_{\vec{r}})$ sur l'arbre de Bruhat-Tits $X$ co\"incident.\\
Nous traitons le cas des repr\'esentations de la s\'erie principale, le cas de la repr\'esentation sp\'eciale s'obtenant de mani\`ere analogue. Soit donc $\eta$ un caract\`ere lisse non trivial de $F^{\times}$, que l'on \'ecrit sous la forme $\eta := \mu_{\lambda}\omega^{\vec{r}}$ avec $\lambda \in \FFF_{p}^{\times}$ et $\vec{r} \in \{0 , \ldots, p-1\}^{f}$. 
D'apr\`es \cite[Theorem 30 (3)]{BL1}, on sait que l'on dispose alors d'un isomorphisme de repr\'esentations de $G$ : 
$$ Ind_{B}^{G}(\eta \otimes \mathbf{1}) \simeq \pi(r, \lambda^{-1/2}, \eta\mu_{\lambda^{-1/2}})$$
o\`u $\lambda^{-1/2} \in \FFF_{p}^{\times}$ est une racine carr\'ee de $\lambda^{-1}$. Le Th\'eor\`eme \ref{dejairred} assure que cet isomorphisme induit l'isomorphisme de repr\'esentations de $G_{S}$ suivant :
\begin{equation}
Ind_{B_{S}}^{G_{S}}(\eta) \simeq \pi(\vec{r} ,  \lambda^{-1/2}, 1) \vert_{G_{S}} \ .
\label{isoGS}
\end{equation}
D'apr\`es  \cite[Theorem 19]{BL1}, le membre de gauche de (\ref{isoGS}) est muni d'une structure de $\mathcal{H}_{G}(KZ, \sigma_{\vec{r}})$-module \`a partir de l'action de cette alg\`ebre de Hecke sur l'arbre de Bruhat-Tits $X$. En particulier, on sait que l'op\'erateur $T_{\vec{r}}$ agit sur $\pi(\vec{r}, \lambda^{-1/2}, 1)$ par le scalaire $\lambda^{-1/2}$, donc que l'op\'erateur $T_{\vec{r}}^{2}$ agit dessus par le scalaire $\lambda^{-1} \not= 0$. Ceci implique que l'op\'erateur $\tau_{\vec{r}}$, qui agit sur $X$ de la m\^eme mani\`ere que $T_{\vec{r}}^{2}$, ne peut agir par $0$ sur $\pi(\vec{r} , \lambda^{-1/2}, 1) \vert_{G_{S}}$, et donc que $Ind_{B_{S}}^{G_{S}}(\eta)$ n'est pas une repr\'esentation supersinguli\`ere de $G_{S}$. 
\end{proof}

\begin{cor}
Soient $\sigma$ une repr\'esentation lisse irr\'eductible de $G_{S}$ et $\Sigma$ une repr\'esentation lisse irr\'eductible de $G$ telles que $\sigma$ soit contenue dans $\Sigma \vert_{G_{S}}$. Alors $\Sigma$ est supersinguli\`ere pour $G$ si et seulement si $\sigma$ est supersinguli\`ere pour $G_{S}$.
 \label{respessg1}
 \end{cor}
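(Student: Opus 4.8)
Le plan est de d\'eduire l'\'enonc\'e des r\'esultats pr\'ec\'edents en combinant la classification en quatre familles des repr\'esentations lisses irr\'eductibles \`a caract\`ere central, valable pour $G$ d'apr\`es Barthel--Livn\'e \cite{BL1} et pour $G_{S}$ d'apr\`es le Th\'eor\`eme \ref{nonssg}, avec les isomorphismes de restriction du Th\'eor\`eme \ref{dejairred}. Il suffit d'\'etablir les deux implications << $\sigma$ supersinguli\`ere $\Rightarrow$ $\Sigma$ supersinguli\`ere >> et << $\Sigma$ supersinguli\`ere $\Rightarrow$ $\sigma$ supersinguli\`ere >>, que nous traiterons par contraposition.

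Pour << $\Sigma$ non supersinguli\`ere $\Rightarrow$ $\sigma$ non supersinguli\`ere >> : d'apr\`es la classification de Barthel--Livn\'e, $\Sigma$ est alors un caract\`ere $\chi\circ\det$, une s\'erie principale irr\'eductible, ou une torsion de la repr\'esentation sp\'eciale $Sp$. Dans le premier cas, $\Sigma\vert_{G_{S}}$ est le caract\`ere trivial, puisque $\det$ vaut $1$ sur $G_{S}$. Dans le deuxi\`eme cas, quitte \`a tordre par un caract\`ere de $\det$ (ce qui ne modifie pas $\Sigma\vert_{G_{S}}$), on peut \'ecrire $\Sigma \simeq Ind_{B}^{G}(\eta\otimes\mathbf{1})$ avec $\eta\not=\mathbf{1}$, et le Th\'eor\`eme \ref{dejairred} (1) donne $\Sigma\vert_{G_{S}} \simeq Ind_{B_{S}}^{G_{S}}(\eta)$, irr\'eductible par le Th\'eor\`eme \ref{sppal}. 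Dans le troisi\`eme cas, $\Sigma\vert_{G_{S}} \simeq Sp\vert_{G_{S}} \simeq Sp_{S}$ d'apr\`es le Th\'eor\`eme \ref{dejairred} (2), irr\'eductible par le Th\'eor\`eme \ref{irredsp}. Dans les trois cas $\Sigma\vert_{G_{S}}$ est irr\'eductible, donc \'egale \`a $\sigma$, et elle n'est pas supersinguli\`ere d'apr\`es le Corollaire \ref{classnonssg} (le caract\`ere trivial ne l'\'etant pas non plus).

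Pour << $\sigma$ non supersinguli\`ere $\Rightarrow$ $\Sigma$ non supersinguli\`ere >> : d'apr\`es le Th\'eor\`eme \ref{nonssg}, $\sigma$ est alors le caract\`ere trivial, une s\'erie principale irr\'eductible $Ind_{B_{S}}^{G_{S}}(\eta)$ avec $\eta\not=\mathbf{1}$, ou $Sp_{S}$. Si $\sigma$ est triviale, alors $\Sigma^{G_{S}}\not=\{0\}$ est un sous-$G$-espace non nul de $\Sigma$ (car $G_{S}$ est distingu\'e dans $G$), donc $\Sigma^{G_{S}}=\Sigma$ par irr\'eductibilit\'e et $\Sigma$ se factorise \`a travers le groupe ab\'elien $G/G_{S}\simeq F^{\times}$ : c'est un caract\`ere de $G$, qui n'est pas supersingulier. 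Dans les deux autres cas, on prolonge $\sigma$ en une repr\'esentation $\tilde{\sigma}$ de $G_{S}Z$ en faisant agir $Z$ par le caract\`ere central de $\Sigma$ ; comme $\tilde{\sigma}$ est contenue dans $\Sigma\vert_{G_{S}Z}$ et que $G_{S}Z$ est distingu\'e d'indice fini dans $G$, avec $G/G_{S}Z\simeq F^{\times}/(F^{\times})^{2}$, la repr\'esentation $\Sigma$ est un sous-quotient de $Ind_{G_{S}Z}^{G}(\tilde{\sigma})$. Or le Th\'eor\`eme \ref{dejairred} identifie $\tilde{\sigma}$ \`a la restriction \`a $G_{S}Z$, respectivement, d'une s\'erie principale irr\'eductible ou d'une torsion de $Sp$ de $G$ (quitte \`a modifier $\eta$ par un caract\`ere quadratique de $F^{\times}$, resp. \`a tordre $Sp$ par un caract\`ere quadratique de $\det$, afin de faire co\"incider les caract\`eres centraux) ; notons $P$ cette repr\'esentation de $G$. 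La formule de projection donne $Ind_{G_{S}Z}^{G}(\tilde{\sigma})\simeq P\otimes\FFF_{p}[G/G_{S}Z]$, dont tous les sous-quotients de Jordan--H\"older sont des torsions de $P$ par des caract\`eres de $G/G_{S}Z$, c'est-\`a-dire des s\'eries principales irr\'eductibles (resp. des torsions de $Sp$) de $G$ : aucun de ces sous-quotients n'est supersingulier, donc $\Sigma$ ne l'est pas.

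L'\'etape qui nous semble la plus d\'elicate est ce second volet : il faut v\'erifier avec soin la co\"incidence des caract\`eres centraux qui permet d'\'ecrire $\tilde{\sigma}$ comme restriction d'une repr\'esentation de $G$, puis contr\^oler les sous-quotients de $Ind_{G_{S}Z}^{G}(\tilde{\sigma})$, ce que permettent la finitude de $F^{\times}/(F^{\times})^{2}$ et la formule de projection, qui ram\`enent tout \`a tordre une repr\'esentation non supersinguli\`ere fix\'ee de $G$ par les (en nombre fini) caract\`eres de $F^{\times}/(F^{\times})^{2}$. Une variante, plus proche de l'esprit du Corollaire \ref{classnonssg}, consisterait \`a restreindre \`a $G_{S}$ une surjection $\text{c-ind}_{KZ}^{G}(\sigma_{\vec{r}})\twoheadrightarrow\Sigma$ annul\'ee par $T_{\vec{r}}$ et \`a invoquer la Proposition \ref{decoupe} ainsi que (\ref{echange}) ; mais cette voie doit composer avec le fait que le stabilisateur de $x_{1}$ dans $G_{S}$ est un sous-groupe compact maximal non conjugu\'e \`a $K_{S}$ dans $G_{S}$, ce qui la rend moins imm\'ediate.
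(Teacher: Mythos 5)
Your second implication ($\Sigma$ non supersinguli\`ere $\Rightarrow$ $\sigma$ non supersinguli\`ere) is exactly the paper's argument: the Barthel--Livn\'e classification of $\Sigma$, the Th\'eor\`eme \ref{dejairred} to identify $\Sigma\vert_{G_{S}}$, then the Corollaire \ref{classnonssg}; you even treat the character case, which the paper leaves tacit. Where you diverge is the other implication. The paper disposes of it in two lines: if $\sigma$ is not supersingular, it is (via the Th\'eor\`eme \ref{dejairred}) the restriction to $G_{S}$ of a non-supersingular irreducible representation of $G$, and it then cannot be contained in $\Sigma\vert_{G_{S}}$ with $\Sigma$ supersingular, by \cite[Corollary 36]{BL1} (absence of intertwinings between supersingular and non-supersingular representations of $G$). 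Your substitute --- extend $\sigma$ to $G_{S}Z$, realize $\Sigma$ as a quotient of $\text{c-ind}_{G_{S}Z}^{G}(\tilde{\sigma})$ by r\'eciprocit\'e de Frobenius, and control the Jordan--H\"older factors with the projection formula --- is a legitimate strategy, and it makes explicit the kind of induction step hidden behind the paper's citation; but it is heavier and it imports hypotheses the paper does not need: you invoke the Th\'eor\`eme \ref{nonssg} (a forward reference, not circular since its proof does not use this corollary, but it forces $\mathrm{car}\,F \neq 2$) and the finiteness of $F^{\times}/(F^{\times})^{2}$, whereas the corollary is stated, and proved in the paper, without any hypothesis on the characteristic.

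The genuine gap is the step you yourself flag: the matching of central characters. If $\chi$ is a \emph{quadratic} character of $F^{\times}$, then $(\chi\circ\det)(aI_{2}) = \chi(a)^{2} = 1$, so twisting $Sp$ or a principal series by a quadratic character of $\det$ does not move the central character at all; and replacing $\eta$ by $\eta\varepsilon$ changes $Ind_{B_{S}}^{G_{S}}(\eta)$, i.e.\ changes $\sigma$. As written, your parenth\`ese cannot produce a $G$-representation $P$ with $P\vert_{G_{S}} \simeq \sigma$ and central character $\chi_{\Sigma}$. What you actually need is a smooth character $\chi$ of $F^{\times}$ with $\chi^{2} = \eta^{-1}\chi_{\Sigma}$ (then $P = Ind_{B}^{G}(\eta\otimes\mathbf{1})\otimes(\chi\circ\det)$, resp.\ $Sp\otimes(\chi\circ\det)$ with $\chi^{2}=\chi_{\Sigma}$, does the job), and the existence of such a $\chi$ requires an argument: since $-I_{2} \in Z_{S} \subset G_{S}$ acts on $\sigma \subset \Sigma$ both by $\eta(-1)$ and by $\chi_{\Sigma}(-1)$, one has $(\eta^{-1}\chi_{\Sigma})(-1) = 1$; now a smooth character of $F^{\times}$ with values in $\FFF_{p}^{\times}$ is trivial on $1+\varpi_{F}\mathcal{O}_{F}$, its unramified part is always a square because $\FFF_{p}^{\times}$ is divisible, and (for $p$ odd) its tame part is a square exactly when it is trivial at $-1$, while for $p=2$ every smooth character is a square. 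With this square-root lemma inserted, your argument goes through for $\mathrm{car}\,F \neq 2$; without it, the identification of $\tilde{\sigma}$ with $P\vert_{G_{S}Z}$, on which the whole projection-formula computation rests, is unjustified.
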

 \begin{proof}
 Nous allons d\'emontrer cette double implication par contraposition. Supposons tout d'abord que $\sigma$ ne soit pas supersinguli\`ere pour $G_{S}$ : le Th\'eor\`eme \ref{dejairred} implique alors que $\sigma$ est isomorphe \`a la restriction \`a $G_{S}$ d'une repr\'esentation irr\'eductible non supersinguli\`ere de $G$; elle ne peut donc pas \^etre contenue dans une repr\'esentation irr\'eductible supersinguli\`ere de $G$ \`a cause de \cite[Corollary 36]{BL1}, qui affirme qu'il n'existe pas d'entrelacements entre repr\'esentations supersinguli\`eres et non supersinguli\`eres de $G$.\\
R\'eciproquement, supposons que $\Sigma$ ne soit pas une repr\'esentation supersinguli\`ere de $G$ : la classification \'etablie dans \cite[page 290]{BL1} assure alors que $\Sigma$ est isomorphe \`a une repr\'esentation de la s\'erie principale de $G$ ou \`a une repr\'esentation sp\'eciale de $G$. De nouveau par le Th\'eor\`eme \ref{dejairred} (et par irr\'eductibilit\'e de $\sigma$), on obtient que $\sigma$ est isomorphe \`a une repr\'esentation de la s\'erie principale de $G_{S}$ ou \`a la repr\'esentation $Sp_{S}$. Le Corollaire \ref{classnonssg} implique alors que $\sigma$ ne peut pas \^etre supersinguli\`ere.
\end{proof}

\subsection{Preuve du Th\'eor\`eme  \ref{nonssg}}
On suppose dans cette partie que $F$ est de caract\'eristique diff\'erente de 2 afin de pouvoir utiliser la Proposition \ref{restri}. En la combinant avec \cite[Theorem 33]{BL1}, on obtient tout d'abord que les quatre familles mentionn\'ees dans l'\'enonc\'e du Th\'eor\`eme \ref{nonssg} \'epuisent \`a isomorphisme pr\`es toutes les repr\'esentations lisses irr\'eductibles admissibles de $G_{S}$ sur $\FFF_{p}$.\\
Le fait que ces familles soient disjointes d\'ecoule directement de ce que l'on a montr\'e dans les sections pr\'ec\'edentes. On commence par remarquer que le caract\`ere trivial est la seule repr\'esentation de dimension finie de $G_{S}$ qui appara\^it dans la liste, donc qu'elle ne peut \^etre isomorphe \`a aucune autre repr\'esentation de cette liste. Par ailleurs, le Th\'eor\`eme \ref{dejairred} associ\'e \`a \cite[Corollary 36 (1)]{BL1} assure qu'il n'existe pas d'entrelacement entre les trois premi\`eres familles, et le Corollaire \ref{classnonssg} implique que les repr\'esentations supersinguli\`eres ne peuvent \^etre isomorphes aux autres repr\'esentations de la liste, ce qui ach\`eve la d\'emonstration.

\begin{rem}
Comme nous l'avons remarqu\'e dans l'introduction, ceci implique que les repr\'esentations supersinguli\`eres de $SL_{2}(F)$ sont exactement ses repr\'esentations supercuspidales (i.e. qui ne sont pas sous-quotient d'une repr\'esentation en s\'erie principale).
\end{rem}  
\section{Repr\'esentations supersinguli\`eres de $SL_{2}(\QQ_{p})$}
\label{cassg}
\noindent Nous supposons d\'esormais que $F = \QQ_{p}$. Par suite, $k_{F}$ est le corps $\FF_{p}$ \`a $p$ \'el\'ements et l'on peut choisir (comme nous le faisons \`a partir de maintenant) $\varpi_{F} = p$ comme uniformisante. Puisque $f = 1$, nous all\`egerons les notations en notant $r$ au lieu de $\vec{r}$.\\ 
Cette section s'organise comme suit : nous commen\c{c}ons par rappeler quelques r\'esultats de Breuil concernant les repr\'esentations supersinguli\`eres de $GL_{2}(\QQ_{p})$ (section \ref{Breuilli}), que nous combinons ensuite aux r\'esultats de la Section \ref{restriction} pour en d\'eduire le Th\'eor\`eme \ref{ssgqp}.
\begin{rem}
Les r\'esultats inh\'erents aux non-isomorphismes entre repr\'esentations supersinguli\`eres de $G_{S}$ (Section \ref{isomgs}) peuvent \^etre obtenus bien plus rapidement \`a partir de l'\'etude des modules simples sur la pro-$p$-alg\`ebre de Hecke de $G_{S}$, comme cela est fait dans \cite{MoiRachelbis}.
\end{rem}

\subsection{Repr\'esentations supersinguli\`eres de $GL_{2}(\QQ_{p})$}
\label{Breuilli}
L'\'etude pr\'ecise des repr\'esentations supersinguli\`eres de $GL_{2}(\QQ_{p})$ est essentiellement due \`a Breuil \cite{Br1} qui a d\'emontr\'e, \`a l'aide des r\'esultats de Barthel-Livn\'e \cite[Theorem 34]{BL1}, le th\'eor\`eme suivant.
\begin{thm}
Les repr\'esentations supersinguli\`eres de $GL_{2}(\QQ_{p})$ sur $\FFF_{p}$ sont exactement les repr\'esentations $\pi(r,0,1)$ avec $r \in \{0, \ldots, p-1\}$.
\end{thm}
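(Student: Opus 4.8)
The plan is to reduce the statement to three facts about the representations $\pi(r,0,1)$ themselves and to prove the decisive one — irreducibility — by the combinatorial analysis of the Hecke operator on the Bruhat--Tits tree carried out by Breuil \cite{Br1}, which refines Barthel--Livné \cite[Theorem 34]{BL1}. Since $F = \QQ_p$ gives $f = 1$, a parameter $\vec r$ is just an integer $r \in \{0,\dots,p-1\}$, so the $\pi(\vec r,0,1)$ occurring in the definition of supersingularity are exactly the $\pi(r,0,1)$; and since twisting by a smooth character $\chi\circ\det$ is by definition harmless for supersingularity (and, as noted in the introduction, invisible after restriction to $SL_2$), it suffices to establish: (i) each $\pi(r,0,1)$, $0\le r\le p-1$, is admissible and irreducible, so that it \emph{is} the irreducible quotient in the definition; (ii) these $p$ representations are pairwise non-isomorphic; (iii) up to twist, every supersingular class is one of them.

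Point (i) is the heart of the matter, and I would work on the tree $X$. Recall that $\text{c-ind}_{KZ}^{G}(\sigma_r)$ has the basis of elementary functions $[g,v]$, whose supports were identified in Section~\ref{immob} with the vertices of $X$, that $T_r$ shifts support by exactly one edge, and that the formulas for $T_r$ recalled around~\eqref{echange} (from \cite[(4) page 7]{Br1}), together with $\lambda = 0$, kill the ``inward'' part of $T_r[g,v]$. First I would use these relations to control the $KZ$-structure of the cokernel $\pi(r,0,1)$ of $T_r$: that $\pi(r,0,1)$ is generated over $G$ by the weight $\sigma_r = \{\overline{[1,v]} : v \in V_r\}$ at $x_0$ (immediate, since $\text{c-ind}_{KZ}^{G}(\sigma_r)$ is generated over $G$ by the $[1,v]$), and — this is where the combinatorics of $\lambda = 0$ really enters — that $(\pi(r,0,1))^{K'}$ is finite-dimensional for every open $K'$, hence admissibility, together with a computation of the finite set of $KZ$-weights occurring in $\text{soc}_{KZ}\,\pi(r,0,1)$. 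Then, given a nonzero subrepresentation $\pi'$, Proposition~\ref{faitcrucial} applied to the pro-$p$-group $I(1)$ yields a nonzero $I(1)$-fixed vector in $\pi'$, hence a $KZ$-irreducible subrepresentation of $\pi(r,0,1)$ inside $\pi'$, i.e. one of those socle weights; the crux is then to check that \emph{each} such weight generates $\pi(r,0,1)$ over $G$, which forces $\pi' = \pi(r,0,1)$.

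For point (iii) I would invoke the Barthel--Livné classification \cite[page 290]{BL1}: every smooth irreducible admissible representation of $G$ with a central character is a character, a principal series, special, or an irreducible quotient of some $\pi(r,0,\chi)$, these four families being pairwise disjoint; by definition the fourth family, up to twist, is the supersingular one, and by (i) each of its members is already an honest $\pi(r,0,1)$. For point (ii), the integer $r$ is recovered from $\pi(r,0,1)$ through its restriction to $KZ$ — $\sigma_r$ lies in the socle, and Breuil's computation of $\text{soc}_{KZ}\,\pi(r,0,1)$ shows the set of weights it contains pins down $r$ — so the $\pi(r,0,1)$ for $0\le r\le p-1$ are pairwise non-isomorphic.

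The main obstacle is concentrated in (i): proving admissibility and, above all, that every constituent of $\text{soc}_{KZ}\,\pi(r,0,1)$ generates $\pi(r,0,1)$ as a $G$-representation — the combinatorial argument on $X$ that ``pushes an arbitrary nonzero vector down to a base weight'', which is exactly where the hypothesis $\lambda = 0$ is genuinely used and which is Breuil's refinement over $\QQ_p$ of \cite[Theorem 34]{BL1}. Granting that structural input, points (ii) and (iii) are formal.
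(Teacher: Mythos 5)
Le point essentiel \`a relever est que l'\'enonc\'e en question n'est pas d\'emontr\'e dans le papier : c'est un th\'eor\`eme de Breuil, rappel\'e avec un renvoi \`a \cite{Br1} (lui-m\^eme appuy\'e sur \cite[Theorem 34]{BL1}). Votre texte n'est donc pas en concurrence avec une preuve interne, et il faut le juger comme preuve autonome. Or, comme vous le dites vous-m\^eme, tout le contenu math\'ematique est concentr\'e dans votre point (i) --- admissibilit\'e et irr\'eductibilit\'e de chaque $\pi(r,0,1)$, c'est-\`a-dire pr\'ecis\'ement le calcul des invariants sous $I(1)$ (le Th\'eor\`eme \ref{Breuil} du papier, i.e. \cite[Th\'eor\`eme 3.2.4]{Br1}) et le fait que tout vecteur non nul engendre la repr\'esentation (\cite[Corollaire 4.1.1]{Br1}) --- et vous l'admettez explicitement (<< granting that structural input >>). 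Les points (ii) et (iii) sont effectivement formels une fois (i) acquis et la classification de Barthel--Livn\'e invoqu\'ee ; mais en l'\'etat, votre texte d\'ecrit la strat\'egie de Breuil (combinatoire de $T_{r}$ sur l'arbre avec $\lambda = 0$, Proposition \ref{faitcrucial}, socle pour $KZ$) sans l'ex\'ecuter, donc il ne constitue pas une d\'emonstration : c'est exactement l'\'etape que le papier, comme vous, d\'el\`egue \`a \cite{Br1}.

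Il y a de plus une affirmation fausse dans votre point (ii), heureusement inutile pour l'\'enonc\'e consid\'er\'e (qui ne pr\'etend pas que ces repr\'esentations sont deux \`a deux non isomorphes). Le socle de $\pi(r,0,1)$ pour $KZ$ est $\sigma_{r} \oplus \sigma_{p-1-r}$ : il ne d\'etermine que la paire non ordonn\'ee $\{r, p-1-r\}$ et non $r$, donc votre argument << the set of weights pins down $r$ >> ne fonctionne pas. Et de fait les $\pi(r,0,1)$, $0 \leq r \leq p-1$, ne sont pas deux \`a deux non isomorphes : l'isomorphisme $\pi(r,0,1) \simeq \pi(p-1-r,0,\omega^{r})$ rappel\'e dans ce papier (Th\'eor\`eme \ref{ssgiso}) donne, pour $r = 0$, $\pi(0,0,1) \simeq \pi(p-1,0,1)$. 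Votre lecture << \`a torsion pr\`es >> de l'\'enonc\'e et votre point (iii) via la classification de \cite[page 290]{BL1} sont en revanche corrects ; il faudrait simplement supprimer (ii) ou le corriger en : $\pi(r,0,1) \simeq \pi(s,0,1)$ si et seulement si $r = s$ ou $\{r,s\} = \{0, p-1\}$.
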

L'obtention de ce r\'esultat repose fondamentalement sur l'\'etude de la structure de l'espace des $I(1)$-invariants de $\pi(r,0,1)$ \cite[Th\'eor\`eme 3.2.4 \footnote{Nous donnons une forme l\'eg\`erement diff\'erente de celle de \cite{Br1},  mais un calcul direct \`a l'aide de la $K$-\'equivariance montre que l'on a $[\alpha, y^{r}] = [\beta, x^{r}]$ de sorte que nous avons effectivement le m\^eme \'enonc\'e.} et Remarque 3.2.5]{Br1}.
\begin{thm}
Pour tout $0 \leq r \leq p-1$, on a : 
\begin{equation}
 \pi(r,0,1)^{I(1)} = \FFF_{p} \overline{[I_{2}, x^{r}]} \oplus \FFF_{p}\overline{[\beta, x^{r}]} \ .
\end{equation} 
\label{Breuil} 
\end{thm}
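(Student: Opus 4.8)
The plan is to compute inside $M := \text{c-ind}_{KZ}^{G}(\sigma_{r})$, realised through the Bruhat--Tits tree $X$ as the space of finitely supported functions on vertices with values in $\sigma_{r}$, and to read off $\pi(r,0,1)^{I(1)}$ from the short exact sequence of $G$-modules $0 \to T_{r}M \to M \to \pi(r,0,1) \to 0$. The inclusion $\supseteq$ is the easy half. Since $I(1) \subset K$ acts on $\sigma_{r}$ through its reduction, which is the upper unipotent subgroup of $GL_{2}(k_{F})$, and since $\sigma_{r}^{U(k_{F})} = \FFF_{p}x^{r}$, the function $[I_{2},x^{r}]$ is $I(1)$-fixed; by the elementary computation $\alpha^{-1}I(1)\alpha \subset K$ with reduction inside the lower unipotent subgroup, $[\alpha,y^{r}]$ is also $I(1)$-fixed, and from the identity $[gk,v] = [g,\sigma_{r}(k)(v)]$ (valid for $k \in KZ$) together with $\beta = \alpha\omega$ and $\sigma_{r}(\omega)(x^{r}) = y^{r}$ one gets $[\beta,x^{r}] = [\alpha,y^{r}]$. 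Hence $\overline{[I_{2},x^{r}]}$ and $\overline{[\beta,x^{r}]}$ both lie in $\pi(r,0,1)^{I(1)}$, and that they remain linearly independent in the quotient follows from the ball filtration of $M$ (neither elementary function lies in $T_{r}M$), or from Barthel-Livn\'e's description of $\pi(r,0,1)|_{K}$.

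The substance of the statement is the reverse inclusion, which I would attack by a \emph{descent} on the tree. First one describes $M^{I(1)}$ sphere by sphere: $I(1)$ fixes $x_{0}$ and the edge $[x_{0},x_{1}]$ (again the computation $\alpha^{-1}I(1)\alpha \subset K$), and on the $q+1$ neighbours of a fixed vertex its reduction fixes the one lying on the geodesic through $x_{0}$ and permutes the other $q$ transitively, the stabiliser of one of the latter being a pro-$p$ group contained in the kernel of $K \to GL_{2}(k_{F})$ and hence acting trivially on $\sigma_{r}$. Combined with $\sigma_{r}^{U(k_{F})} = \FFF_{p}x^{r}$ and $\sigma_{r}^{\overline{U}(k_{F})} = \FFF_{p}y^{r}$, this yields an explicit (if large) basis of $M^{I(1)}$ adapted to the distance filtration, with distinguished one-dimensional contributions $\FFF_{p}[I_{2},x^{r}]$ at radius $0$ and $\FFF_{p}[\alpha,y^{r}] = \FFF_{p}[\beta,x^{r}]$ at radius $1$.

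Next one uses the two-block shape of $T_{r}$ recalled from \cite[(4) page 7]{Br1}: on an elementary function $T_{r}$ has a \emph{raising} part (support pushed one sphere outward) and a \emph{lowering} part (support pushed one sphere inward), the function $[I_{2},\cdot]$ having no lowering part. Given $\phi \in M^{I(1)}$ supported on the ball of radius $n$, one shows, using the explicit formulas and the parity statement $T_{r}(X_{p}) \subset X_{imp}$, $T_{r}(X_{imp}) \subset X_{p}$, that the top-radius part of $\phi$ can be absorbed into $T_{r}\psi$ for a suitable $\psi \in M^{I(1)}$ of radius $n-1$, modulo terms of radius $\le n-1$; iterating, every class in $\pi(r,0,1)^{I(1)}$ is represented by an $I(1)$-invariant supported on the ball of radius $1$. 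Finally, feeding the radius-$1$ invariants back through $T_{r}$ — their lowering part landing in the line $\FFF_{p}[I_{2},x^{r}]$ and their raising part being eliminated by the previous step, the kernels of the raising part accounting for the remaining cancellations — produces exactly the relations needed to collapse the radius-$\le 1$ invariants onto $\FFF_{p}\overline{[I_{2},x^{r}]} \oplus \FFF_{p}\overline{[\beta,x^{r}]}$, which is the claimed equality.

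I expect the main obstacle to be precisely this final collapse: one must control the raising and lowering parts of $T_{r}$ on $I(1)$-invariants simultaneously, identify the kernels of the raising part on each sphere (this is where the structure of $\sigma_{r} = Sym^{r}$ and the supersingular normalisation $\lambda = 0$, i.e.\ working with $T_{r}$ rather than $T_{r} - \lambda$, really enter), and check that no spurious $I(1)$-invariant survives at an intermediate radius, so that the descent terminates exactly in dimension $2$. Equivalently, one must rule out any extra contribution to $\pi(r,0,1)^{I(1)}$ arising from the failure of the $I(1)$-invariants functor to be exact, which amounts to showing that every invariant class of the cokernel lifts to an $I(1)$-invariant of $M$ supported on a bounded ball of the tree.
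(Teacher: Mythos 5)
First, a remark on the comparison itself: the paper does not prove this statement, it quotes it from Breuil (\cite[Th\'eor\`eme 3.2.4 et Remarque 3.2.5]{Br1}), the only original content being the footnote identity $[\alpha,y^{r}]=[\beta,x^{r}]$, which you rederive correctly. Your inclusion $\supseteq$ is sound and matches what the paper itself checks around Theorem \ref{BreuilMoi}: the $I(1)$-invariance of $[I_{2},x^{r}]$ via $\sigma_{r}^{U(k_{F})}=\FFF_{p}x^{r}$, the invariance of $[\alpha,y^{r}]$ via $\alpha^{-1}I(1)\alpha\subset K$ with lower-unipotent reduction, and the independence of the two images in the quotient via the parity statement (Proposition \ref{decoupe} and (\ref{echange})) plus the fact that $T_{r}$ pushes the top sphere of a support outward.

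The reverse inclusion, which is the entire content of the theorem, is not established by your text: it is a programme whose two decisive steps remain assertions. First, your descent is formulated for genuine invariants $\phi\in M^{I(1)}$, where $M=\text{c-ind}_{KZ}^{G}(\sigma_{r})$, but an $I(1)$-invariant class of $\pi(r,0,1)=M/T_{r}M$ is merely a $v\in M$ with $(g-1)v\in T_{r}M$ for all $g\in I(1)$; since $H^{1}(I(1),T_{r}M)$ has no reason to vanish, such a class need not lift to $M^{I(1)}$. You flag this in your last sentence but give no argument, and this is exactly where Breuil's proof spends its effort: he takes an arbitrary such $v$, puts it in a standard form adapted to the balls of the tree, and eliminates it sphere by sphere modulo $T_{r}M$ through explicit identities involving sums over representatives of $\ZZ_{p}/p^{n}\ZZ_{p}$ and binomial coefficients modulo $p$. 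Second, the claims that the top-sphere part of an invariant can always be absorbed into $T_{r}\psi$ with $\psi$ invariant of smaller radius, and that the final collapse at radius $\leq 1$ leaves exactly the two named lines, are precisely the computation of \cite[Th\'eor\`eme 3.2.4]{Br1}; moreover that computation uses $F=\QQ_{p}$ essentially (for $k_{F}\neq\FF_{p}$ the module $\pi(\vec{r},0,1)$ is known to behave very differently, so no argument that never invokes $q=p$, as is the case of your sketch, can succeed). As it stands, your proposal proves the easy inclusion and the identification $[\beta,x^{r}]=[\alpha,y^{r}]$, but for the hard inclusion it restates the problem that the cited theorem of Breuil solves rather than solving it.
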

L'\'etude de la preuve de ce th\'eor\`eme montre qu'elle reste valable si l'on cherche \`a calculer les $I_{S}(1)$-invariants de $\pi(r,0,1)$ : en effet, la d\'emonstration donn\'ee par Breuil prouve que l'espace des invariants de $\pi(r,0,1)$ sous le sous-groupe de $G$ engendr\'e par les matrices unipotentes inf\'erieures et sup\'erieures est \'egal au $\FFF_{p}$-espace vectoriel de dimension 2 ayant pour base $\{\overline{[I_{2}, x^{r}]}, \overline{[\beta, x^{r}]}\}$. Ces deux \'el\'ements sont $I_{S}(1)$-invariants (puisque $I(1)$-invariants), ce qui fournit un r\'esultat analogue au Th\'eor\`eme \ref{Breuil} pour $G_{S}$.
\begin{thm} Pour tout $r \in \{0, \ldots, p-1\}$, l'espace des vecteurs $I_{S}(1)$-invariants de $\pi(r,0,1)$ est \'egal \`a 
$$ \pi(r,0,1)^{I_{S}(1)} = \FFF_{p} \overline{[I_{2}, x^{r}]} \oplus  \FFF_{p} \overline{[\beta, x^{r}]} = \pi(r,0,1)^{I(1)} \ .$$
\label{BreuilMoi}
\end{thm}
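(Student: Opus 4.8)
Le plan est d'encadrer l'espace $\pi(r,0,1)^{I_{S}(1)}$ entre deux espaces que le Th\'eor\`eme~\ref{Breuil} et sa d\'emonstration identifient tous deux \`a $D := \FFF_{p}\,\overline{[I_{2},x^{r}]} \oplus \FFF_{p}\,\overline{[\beta,x^{r}]}$. L'une des deux inclusions est imm\'ediate : puisque $I_{S}(1) = I(1) \cap K_{S}$ est un sous-groupe de $I(1)$, tout vecteur fix\'e par $I(1)$ l'est a fortiori par $I_{S}(1)$, de sorte que le Th\'eor\`eme~\ref{Breuil} fournit $D = \pi(r,0,1)^{I(1)} \subseteq \pi(r,0,1)^{I_{S}(1)}$ ; on peut aussi le voir en observant que les g\'en\'erateurs $\overline{[I_{2},x^{r}]}$ et $\overline{[\beta,x^{r}]}$ de $D$, \'etant $I(1)$-invariants, sont $I_{S}(1)$-invariants.

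Pour l'inclusion oppos\'ee, on reviendrait \`a la preuve du Th\'eor\`eme~\ref{Breuil} donn\'ee dans \cite[Th\'eor\`eme 3.2.4 et Remarque 3.2.5]{Br1}. Notons $N$ le sous-groupe de $I(1)$ engendr\'e par son sous-groupe unipotent sup\'erieur $\left(\begin{smallmatrix} 1 & \mathcal{O}_{F} \\ 0 & 1 \end{smallmatrix}\right)$ et par son sous-groupe unipotent inf\'erieur $\left(\begin{smallmatrix} 1 & 0 \\ \varpi_{F}\mathcal{O}_{F} & 1 \end{smallmatrix}\right)$. Tout \'el\'ement de $N$ est un produit de matrices de d\'eterminant $1$, donc est lui-m\^eme de d\'eterminant $1$ ; comme $N \subseteq I(1)$, on en d\'eduit $N \subseteq I(1) \cap SL_{2}(F) = I_{S}(1)$. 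Le point d\'elicat est que la d\'emonstration de Breuil ne sollicite jamais la partie torique de $I(1)$ : elle n'emploie que l'invariance sous ces deux sous-groupes unipotents et \'etablit en fait l'\'enonc\'e plus fort selon lequel toute fonction de $\pi(r,0,1)$ fix\'ee par $N$ appartient d\'ej\`a \`a $D$. On en tire $\pi(r,0,1)^{I_{S}(1)} \subseteq \pi(r,0,1)^{N} = D$.

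En combinant les deux inclusions, on obtient la cha\^ine
$$ D = \pi(r,0,1)^{I(1)} \subseteq \pi(r,0,1)^{I_{S}(1)} \subseteq \pi(r,0,1)^{N} = D \ , $$
d'o\`u $\pi(r,0,1)^{I_{S}(1)} = D = \pi(r,0,1)^{I(1)}$, ce qui est exactement l'\'enonc\'e voulu. L'essentiel de l'effort se concentre dans l'avant-derni\`ere \'etape : il faut relire \cite[\S 3.2]{Br1} et s'assurer que le calcul des vecteurs $I(1)$-invariants de $\pi(r,0,1)$ qui y est men\'e, via l'arbre $X$, ne met en jeu que les actions des sous-groupes unipotents $\left(\begin{smallmatrix} 1 & \mathcal{O}_{F} \\ 0 & 1 \end{smallmatrix}\right)$ et $\left(\begin{smallmatrix} 1 & 0 \\ \varpi_{F}\mathcal{O}_{F} & 1 \end{smallmatrix}\right)$, et jamais celle du tore de congruence $\left(\begin{smallmatrix} 1+\varpi_{F}\mathcal{O}_{F} & 0 \\ 0 & 1+\varpi_{F}\mathcal{O}_{F} \end{smallmatrix}\right)$ contenu dans $I(1)$. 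C'est une v\'erification de pure routine, mais c'est le pivot de tout l'argument ; elle a de surcro\^it l'avantage de fournir un \'enonc\'e valable sans restriction sur $p$, alors qu'un raisonnement alternatif fond\'e sur la trivialit\'e du caract\`ere central de $\pi(r,0,1)$ sur $Z \cap I(1)$ imposerait $p$ impair, pour pouvoir extraire une racine carr\'ee dans $1 + \varpi_{F}\mathcal{O}_{F}$.
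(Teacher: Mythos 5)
Votre preuve est correcte et suit essentiellement la m\^eme d\'emarche que celle du papier : on y observe exactement que la d\'emonstration de Breuil n'utilise que l'invariance sous les sous-groupes unipotents sup\'erieur et inf\'erieur (dont le groupe engendr\'e est contenu dans $I_{S}(1)$), ce qui donne $\pi(r,0,1)^{I_{S}(1)} \subseteq D$, tandis que l'inclusion r\'eciproque vient de ce que $\overline{[I_{2},x^{r}]}$ et $\overline{[\beta,x^{r}]}$, \'etant $I(1)$-invariants, sont $I_{S}(1)$-invariants. Votre r\'edaction explicite simplement un peu plus le sous-groupe $N$ et l'encadrement $D = \pi(r,0,1)^{I(1)} \subseteq \pi(r,0,1)^{I_{S}(1)} \subseteq \pi(r,0,1)^{N} = D$, ce qui est conforme \`a l'argument du texte.
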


\noindent Cet \'enonc\'e fait naturellement appara\^itre deux repr\'esentations de $G_{S}$ contenues dans $\pi(r,0,1)$ : la repr\'esentation engendr\'ee sur $\FFF_{p}$ par le vecteur $v_{r,\infty} := \overline{[I_{2}, x^{r}]}$, que nous noterons $\pi_{r,\infty}$, et celle engendr\'ee par $v_{r,0} := \overline{[\beta, x^{r}]}$, que nous noterons $\pi_{r,0}$.
\begin{NB}
\label{P1}
Si l'on veut \^etre (tr\`es) prudent, l'\'enonc\'e du Th\'eor\`eme \ref{BreuilMoi} pousse d'abord \`a consid\'erer toute la famille de vecteurs $I_{S}(1)$-invariants $v_{r,\lambda} := v_{r,0} + \lambda v_{r,\infty}$ ($\lambda \in \FFF_{p}$) et les sous-repr\'esentations $\pi_{r,\lambda}$ de $G_{S}$ qu'ils engendrent sur $\FFF_{p}$. Comme nous le verrons dans le Corollaire \ref{slt} de la section \ref{decompo}, les seules repr\'esentations irr\'eductibles qui apparaissent dans cette famille sont $\pi_{r,0}$ et $\pi_{r,\infty}$. Les autres sont en fait toutes \'egales \`a $\pi(r,0,1)$.
\label{piz}
\end{NB}

\subsection{Irr\'eductibilit\'e de $\pi_{r, \infty}$ et de $\pi_{r,0}$}
\label{irred}
Jusqu'\`a la fin de la section \ref{decompo}, on fixe un entier $r \in \{0, \ldots, p-1\}$. Comme nous allons le d\'emontrer dans la Section \ref{superired}, l'irr\'eductibilit\'e des repr\'esentations $\pi_{r,\infty}$ et $\pi_{r,0}$ repose essentiellement sur le fait que leurs espaces de vecteurs $I_{S}(1)$-invariants sont de dimension 1 (Proposition \ref{dim1}).

\subsubsection{Un r\'esultat technique important}
\begin{lm}
Les repr\'esentations $\pi_{r,0}$ et $\pi_{r,\infty}$ n'ont pas d'\'el\'ement commun non nul : 
$$\pi_{r,0} \cap \pi_{r,\infty} = \{0\} \ .$$
\label{rescle}
\end{lm}
\begin{proof}
Soient $\overline{f} \in \pi_{r, 0} \cap \pi_{r, \infty}$ et $f \in ind_{K\*Z}^{G}(\sigma_{r})$ un rel\`evement de $\overline{f}$ . Dire que $\overline{f}$ est dans $\pi_{r, \infty}$ signifie que l'on peut \'ecrire $f$ sous la forme d'une somme finie 
\begin{equation}
f = \displaystyle \sum_{i \in I} \lambda_{i}\*s_{i}\*v_{r,\infty} + T_{r}(f_{\infty})
\label{cle1}
\end{equation}
avec $\lambda_{i} \in \FFF_{p}$ et $s_{i} \in G_{S}$ pour tout $i \in I$, et $f_{\infty} \in ind_{K\*Z}^{G}(\sigma_{r})$.\\
De m\^eme, dire que $\overline{f}$ est dans $\pi_{r, 0}$ signifie que l'on peut l'\'ecrire sous la forme d'une somme finie 
\begin{equation}
f = \displaystyle \sum_{j \in J} \mu_{j}\*t_{j}\*v_{r,0} + T_{r}(f_{0})
\label{cle2}
\end{equation}
avec $\mu_{j} \in \FFF_{p}$ et $t_{j} \in G_{S}$ pour tout $j \in J$, et $f_{0} \in ind_{K\*Z}^{G}(\sigma_{r})$.\\
\noindent La Proposition \ref{decoupe} nous permet par ailleurs d'\'ecrire $f = f_{p} + f_{imp}$ (resp. $f_{\infty} = f_{\infty, p} + f_{\infty, imp}$, $ f_{0} = f_{0, p} + f_{0, imp}$) avec $f_{p}$ (resp. $f_{\infty, p}$, $f_{0,p}$) \`a support dans $X_{p}$ et $f_{imp}$ (resp. $f_{\infty, imp}$, $f_{0, imp}$) \`a support dans $X_{imp}$. On peut donc r\'e\'ecrire l'expression (\ref{cle1}) sous la forme
\begin{equation}
f_{imp} - T_{r}(f_{\infty, p}) = \displaystyle \sum_{i \in I} \lambda_{i}\*s_{i}\*v_{r,\infty} + T_{r}(f_{\infty, imp}) - f_{p} \ .
\label{cle1bis}
\end{equation}
Rappelons alors que $v_{r,\infty} = \overline{[I_{2}, x^{r}]}$ est \`a support dans le sommet $x_{0}$ et que l'on a $T_{r}(X_{imp}) \subset X_{p}$ par (\ref{echange}), de sorte que le membre de droite de (\ref{cle1bis}) est \`a support dans $X_{p}$. Le membre de gauche de (\ref{cle1bis}) est quant \`a lui de support contenu dans $X_{imp}$ car  (\ref{echange}) assure que $T_{r}(X_{p}) \subset X_{imp}$. Etant donn\'e que $X_{p} \cap X_{imp} = \emptyset$, l'expression (\ref{cle1bis}) n'est possible que si ses deux membres sont nuls, ce qui implique en particulier que $f_{imp} \in Im\ T_{r}$.\\

\noindent De m\^eme, l'expression (\ref{cle2}) peut \^etre r\'e\'ecrite sous la forme 
\begin{equation}
f_{imp} - T_{r}(f_{0, p}) = \displaystyle \sum_{j \in J} \mu_{j}\*t_{j}\*v_{r,0} + T_{r}(f_{0, imp}) - f_{p} \ .
\label{cle2bis}
\end{equation}
Sachant cette fois que $v_{r,0} = \overline{[\alpha , y^{r}]}$ est \`a support dans le sommet $x_{1}$, on obtient par le m\^eme raisonnement que le membre de droite de (\ref{cle2bis}) est \`a support dans $X_{imp}$ tandis que le membre de gauche est \`a support dans $X_{p}$, ce qui n'est de nouveau possible que si ces deux membres sont nuls. Ceci implique en particulier que $f_{imp} \in Im\ T_{r}$.\\
Par lin\'earit\'e de l'op\'erateur $T_{r}$, on obtient alors que $f = f_{p} + f_{imp} \in Im\ T_{r}$, ce qui signifie que $\overline{f}$ est nulle dans $\pi(r,0,1)$ et termine la d\'emonstration.
\end{proof}

\subsubsection{$I_{S}(1)$-invariants de $\pi_{r,0}$ et $ \pi_{r,\infty}$}
\begin{prop} Soit $ r \in \{0, \ldots, p-1\}$.\\
L'espace des $I_{S}(1)$-invariants de $\pi_{r,\infty}$ (resp. $\pi_{r,0}$) est un $\FFF_{p}$-espace vectoriel de dimension 1 engendr\'e par $v_{r,\infty}$ (resp. $v_{r,0}$).
\label{dim1}
\end{prop}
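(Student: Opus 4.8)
The plan is to prove the statement for $\pi_{r,\infty}$; the case of $\pi_{r,0}$ is entirely symmetric (exchange the roles of $v_{r,\infty}=\overline{[I_2,x^r]}$ and $v_{r,0}=\overline{[\beta,x^r]}$, using that $\beta\in G$ conjugates one vertex to the other). Since $v_{r,\infty}$ is $I(1)$-invariant by Theorem~\ref{Breuil}, it is a fortiori $I_S(1)$-invariant, so $\FFF_p v_{r,\infty}\subseteq \pi_{r,\infty}^{I_S(1)}$ and it suffices to establish the reverse inclusion, i.e.\ that $\dim_{\FFF_p}\pi_{r,\infty}^{I_S(1)}\leq 1$.

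First I would use the inclusion of $G_S$-representations $\pi_{r,\infty}\hookrightarrow \pi(r,0,1)$ to view $\pi_{r,\infty}^{I_S(1)}$ as a subspace of $\pi(r,0,1)^{I_S(1)}$, which by Theorem~\ref{BreuilMoi} equals $\FFF_p v_{r,\infty}\oplus\FFF_p v_{r,0}$. So any $I_S(1)$-invariant vector of $\pi_{r,\infty}$ can be written $a\,v_{r,\infty}+b\,v_{r,0}$ with $a,b\in\FFF_p$; the goal becomes to show $b=0$. For this the key point is that $a\,v_{r,\infty}$ already lies in $\pi_{r,\infty}$ (it is a scalar multiple of a generator), so the difference $b\,v_{r,0}=(a\,v_{r,\infty}+b\,v_{r,0})-a\,v_{r,\infty}$ lies in $\pi_{r,\infty}$. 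But $v_{r,0}$ generates $\pi_{r,0}$, and when $b\neq 0$ this forces $v_{r,0}\in\pi_{r,\infty}$, hence $\pi_{r,0}\subseteq\pi_{r,\infty}$, contradicting Lemma~\ref{rescle} which says $\pi_{r,0}\cap\pi_{r,\infty}=\{0\}$ (and $v_{r,0}\neq 0$, since e.g.\ its image generates the nonzero space $\pi_{r,0}$, or because $v_{r,0}$ is part of the basis in Theorem~\ref{BreuilMoi}). Therefore $b=0$ and $\pi_{r,\infty}^{I_S(1)}=\FFF_p v_{r,\infty}$, which is one-dimensional.

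The argument is short precisely because the hard work has already been done: the computation of the full $I_S(1)$-invariants of the ambient representation (Theorem~\ref{BreuilMoi}, itself a consequence of Breuil's analysis) and the disjointness Lemma~\ref{rescle}, whose proof rested on the fact that $T_r$ swaps the two orbits $X_p$ and $X_{imp}$ of $G_S$ on the tree. The only mild subtlety is bookkeeping: one must be careful that $\pi_{r,\infty}$ and $\pi_{r,0}$ are \emph{$G_S$-subrepresentations} (not merely $\FFF_p$-lines), so that membership of a vector in $\pi_{r,\infty}$ is a meaningful closed condition, and that the embedding $\pi_{r,\infty}\hookrightarrow\pi(r,0,1)$ is $I_S(1)$-equivariant so that taking $I_S(1)$-invariants commutes with it. No genuine obstacle is expected; the main thing to get right is simply to invoke Lemma~\ref{rescle} at the correct moment rather than attempting a direct invariant-vector computation inside $\pi_{r,\infty}$.
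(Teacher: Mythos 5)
Your proposal is correct and follows essentially the same route as the paper: embed the invariants into $\pi(r,0,1)^{I_{S}(1)}=\FFF_{p}v_{r,\infty}\oplus\FFF_{p}v_{r,0}$ via Théorème~\ref{BreuilMoi}, then use Lemme~\ref{rescle} to force the coefficient of $v_{r,0}$ to vanish. The only cosmetic difference is that the paper writes the element of $\pi_{r,\infty}$ explicitly as a finite sum of $G_{S}$-translates of $v_{r,\infty}$ before concluding $bv_{r,0}\in\pi_{r,0}\cap\pi_{r,\infty}$, whereas you observe this directly; the substance is identical.
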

\begin{proof}
On traite le cas de $\pi_{r, \infty}$, celui de $\pi_{r,0}$ s'obtenant de la m\^eme mani\`ere. Il est tout d'abord clair que le Th\'eor\`eme \ref{BreuilMoi} assure que $\FFF_{p} v_{\infty}$ est contenu dans $\pi_{r,\infty}^{I_{S}(1)}$. R\'eciproquement, soit $f$ un \'el\'ement non nul de $\pi_{r,\infty}^{I_{S}(1)}$. Un tel $f$ appartient a fortiori \`a $\pi(r,0,1)^{I_{S}(1)}$ de sorte que, de nouveau gr\^ace au Th\'eor\`eme \ref{BreuilMoi}, on peut l'\'ecrire sous la forme 
\begin{equation}
f = av_{r,\infty} + bv_{r,0}
\label{inv1}
\end{equation}
avec $a$, $b \in \FFF_{p}$ non simultan\'ement nuls.\\
Par ailleurs, $f$ est un \'el\'ement de $\pi_{r,\infty} = \langle G_{S}\cdot v_{r,\infty} \rangle$ donc il peut aussi s'\'ecrire sous la forme d'une somme finie 
\begin{equation}
f = \displaystyle \sum_{i \in I} \lambda_{i}\*s_{i}\*v_{r,\infty}
\label{inv2}
\end{equation}
avec $\lambda_{i} \in \FFF_{p}$ et $s_{i} \in G_{S}$ pour tout $i \in I$.\\
En comparant les identit\'es (\ref{inv1}) et (\ref{inv2}), on obtient alors que
$$ bv_{r,0} = -av_{r,\infty} + \displaystyle \sum_{i \in I} \lambda_{i}\*s_{i}\*v_{r,\infty} \in \pi_{r,0} \cap \pi_{r, \infty} $$
ce qui n'est possible que si $b = 0$ (Lemme \ref{rescle}), i.e. si $f = av_{r,\infty}$ est colin\'eaire \`a $v_{r,\infty}$.
\end{proof}

\subsubsection{Le r\'esultat d'irr\'eductibilit\'e}
\label{superired}
La Proposition \ref{faitcrucial} nous permet maintenant d'obtenir facilement l'irr\'eductibilit\'e de nos repr\'esentations supersinguli\`eres.
\begin{prop}
Les repr\'esentations $\pi_{r,0}$ et $\pi_{r,\infty}$ sont des repr\'esentations lisses irr\'eductibles non nulles de $G_{S}$.
\label{ired}
\end{prop}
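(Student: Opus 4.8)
Le plan est de d\'eduire l'irr\'eductibilit\'e de la Proposition \ref{dim1} (dimension $1$ de l'espace des $I_{S}(1)$-invariants) \`a l'aide de la Proposition \ref{faitcrucial}, suivant exactement le sch\'ema de la preuve du Th\'eor\`eme \ref{irredsp}. Je traiterais le cas de $\pi_{r,\infty}$, celui de $\pi_{r,0}$ \'etant strictement analogue. Je commencerais par observer que $\pi_{r,\infty}$ est non nulle et lisse : elle contient par d\'efinition le vecteur $v_{r,\infty} = \overline{[I_{2}, x^{r}]}$, qui est non nul d'apr\`es le Th\'eor\`eme \ref{BreuilMoi} (il fait partie d'une base de $\pi(r,0,1)^{I_{S}(1)}$), et c'est une sous-repr\'esentation de $\pi(r,0,1)$, laquelle est lisse comme quotient d'une induite compacte d'une repr\'esentation lisse.

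Pour l'irr\'eductibilit\'e, je partirais d'une sous-repr\'esentation non nulle $\pi$ de $\pi_{r,\infty}$. Comme $I_{S}(1)$ est un pro-$p$-groupe, la Proposition \ref{faitcrucial} assure que $\pi^{I_{S}(1)} \not= \{0\}$. Or $\pi^{I_{S}(1)}$ est un sous-espace non nul de $\pi_{r,\infty}^{I_{S}(1)}$, qui est la droite $\FFF_{p} v_{r,\infty}$ d'apr\`es la Proposition \ref{dim1} ; un sous-espace non nul d'une droite \'etant cette droite tout enti\`ere, on obtient $v_{r,\infty} \in \pi$. Puisque $\pi$ est stable sous $G_{S}$, elle contient alors la sous-repr\'esentation de $G_{S}$ engendr\'ee par $v_{r,\infty}$, c'est-\`a-dire $\pi_{r,\infty}$ tout entier, d'o\`u $\pi = \pi_{r,\infty}$. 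Le cas de $\pi_{r,0}$ se traite mot pour mot de la m\^eme mani\`ere en rempla\c{c}ant $v_{r,\infty}$ par $v_{r,0}$ et en invoquant la partie correspondante de la Proposition \ref{dim1}.

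Je m'attends \`a ce qu'aucun obstacle r\'eel ne subsiste \`a ce stade : toute la difficult\'e a \'et\'e concentr\'ee en amont, dans le calcul des $I_{S}(1)$-invariants de $\pi_{r,0}$ et $\pi_{r,\infty}$ (Proposition \ref{dim1}), lui-m\^eme reposant sur le Lemme \ref{rescle} de disjonction $\pi_{r,0} \cap \pi_{r,\infty} = \{0\}$ et sur le Th\'eor\`eme \ref{BreuilMoi}. Une fois ces \'enonc\'es acquis, l'irr\'eductibilit\'e n'est plus qu'une cons\'equence formelle de la Proposition \ref{faitcrucial}. Le seul point m\'eritant un mot de soin est de bien justifier la non-nullit\'e de $v_{r,\infty}$ (et $v_{r,0}$) via le Th\'eor\`eme \ref{BreuilMoi}, ce qui garantit que les repr\'esentations obtenues ne sont pas triviales.
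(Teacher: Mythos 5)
Votre preuve est correcte et suit essentiellement le même chemin que celle du papier : non-nullité et lissité déduites de l'inclusion dans $\pi(r,0,1)$, puis irréductibilité obtenue en combinant la Proposition \ref{faitcrucial} (appliquée au pro-$p$-groupe $I_{S}(1)$) avec la Proposition \ref{dim1}, ce qui force toute sous-représentation non nulle à contenir $v_{r,z}$ et donc à être $\pi_{r,z}$ tout entière.
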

\begin{proof}
Soit $z \in \{0 , \infty\}$. La non nullit\'e de la repr\'esentation $\pi_{r,z}$ est \'evidente puisqu'elle contient le vecteur non nul $v_{r,z}$  (cf Th\'eor\`eme \ref{BreuilMoi} par exemple). De m\^eme, la lissit\'e de l'action de $G_{S}$ provient directement de celle de l'action de $G$ sur $\pi(r,0,1)$.\\
Soit maintenant $\pi$ une sous-repr\'esentation non nulle de $\pi_{r,z}$. La Proposition \ref{faitcrucial} appliqu\'ee \`a $P = I_{S}(1)$ assure que $\pi$ contient un vecteur $v \not= 0$ qui est $I_{S}(1)$-invariant. Ce vecteur appartient a fortiori \`a $\pi_{r,z}^{I_{S}(1)}$ qui est de dimension 1 sur $\FFF_{p}$ par la Proposition \ref{dim1}. On en d\'eduit donc que $v$ est colin\'eaire \`a $v_{r,z}$ avec coefficient de proportionnalit\'e non nul, ce qui implique que le vecteur $v_{r,z}$ est contenu dans $\pi$, et donc que $\pi = \pi_{r,z}$ puisque $\pi_{r,z}$ est par d\'efinition engendr\'ee par $v_{r,z}$.
\end{proof}

\subsection{D\'ecomposition en somme directe de $\pi(r,0,1) \vert_{G_{S}}$}
\label{decompo}
\begin{thm}
L'inclusion naturelle de $\pi_{r,\infty}$ et $\pi_{r,0}$ dans $\pi(r,0,1)$ induit un isomorphisme de $G_{S}$-repr\'esentations :
$$\pi(r,0,1) = \pi_{r, \infty} \oplus \pi_{r, 0} \ .$$
\label{decomp}
\end{thm}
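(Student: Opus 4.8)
The sum $\pi_{r,\infty}+\pi_{r,0}$ is direct by Lemme \ref{rescle}, so all that has to be proved is the equality $\pi_{r,\infty}+\pi_{r,0}=\pi(r,0,1)$. The plan is to match $\pi_{r,\infty}$ and $\pi_{r,0}$ with the two ``parity blocks'' of $\pi(r,0,1)$ coming from the partition of the Bruhat--Tits tree into $X_{p}$ and $X_{imp}$ under $G_{S}$.

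First I would record a parity decomposition of the ambient space. Let $C_{p}$ (resp. $C_{imp}$) be the subspace of $\text{c-ind}_{KZ}^{G}(\sigma_{r})$ of functions supported on $X_{p}$ (resp. $X_{imp}$). By Proposition \ref{decoupe} one has $\text{c-ind}_{KZ}^{G}(\sigma_{r})=C_{p}\oplus C_{imp}$, and both summands are $G_{S}$-stable, since $G_{S}$ preserves each orbit $X_{p}$, $X_{imp}$. By (\ref{echange}), $T_{r}(C_{p})\subseteq C_{imp}$ and $T_{r}(C_{imp})\subseteq C_{p}$, so $Im\, T_{r}=T_{r}(C_{imp})\oplus T_{r}(C_{p})$ with $T_{r}(C_{imp})\subseteq C_{p}$ and $T_{r}(C_{p})\subseteq C_{imp}$; in particular $Im\, T_{r}$ is compatible with the splitting $C_{p}\oplus C_{imp}$, and passing to the cokernel yields a decomposition of $G_{S}$-representations
$$\pi(r,0,1)=\overline{C_{p}}\oplus\overline{C_{imp}}\ ,$$
where $\overline{C_{p}}$ (resp. $\overline{C_{imp}}$) is the image of $C_{p}$ (resp. $C_{imp}$) in $\pi(r,0,1)$. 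It then suffices to prove that $\pi_{r,\infty}=\overline{C_{p}}$ and $\pi_{r,0}=\overline{C_{imp}}$.

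The inclusions $\subseteq$ are immediate: the representative $[I_{2},x^{r}]$ of $v_{r,\infty}$ is supported at $x_{0}\in X_{p}$, the representative $[\beta,x^{r}]$ of $v_{r,0}$ is supported at $x_{1}=\beta x_{0}\in X_{imp}$ (here $\alpha=\beta\omega$ and $\omega\in KZ$ fixes $x_{0}$), and $\overline{C_{p}}$, $\overline{C_{imp}}$ are $G_{S}$-stable. For the reverse inclusions, the crucial input is that the restriction of $\sigma_{r}=Sym^{r}(\FFF_{p}^{2})$ to $SL_{2}(k_{F})$ is irreducible for $0\le r\le p-1$, so that already the $K_{S}$-orbit of $x^{r}$ spans $\sigma_{r}$. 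Using $[gk,v]=[g,\sigma_{r}(k)(v)]$ for $k\in KZ$: applying $k\in K_{S}$ to $v_{r,\infty}=\overline{[I_{2},x^{r}]}$ produces $\overline{[I_{2},\sigma_{r}(k)(x^{r})]}$, hence $\overline{[I_{2},w]}\in\pi_{r,\infty}$ for every $w\in\sigma_{r}$; and since $\beta K_{S}\beta^{-1}\subseteq G_{S}$ (the determinant being $1$), applying $\beta k\beta^{-1}$ to $v_{r,0}=\overline{[\beta,x^{r}]}$ gives $(\beta k\beta^{-1})v_{r,0}=\overline{[\beta k,x^{r}]}=\overline{[\beta,\sigma_{r}(k)(x^{r})]}$, hence $\overline{[\beta,w]}\in\pi_{r,0}$ for every $w\in\sigma_{r}$. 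Finally $X_{p}=G_{S}x_{0}$ and $X_{imp}=G_{S}x_{1}=G_{S}\beta x_{0}$, so $C_{p}$ is spanned by the elementary functions $[g,v]$ ($g\in G_{S}$, $v\in\sigma_{r}$), whose images in $\pi(r,0,1)$ are $g\cdot\overline{[I_{2},v]}\in\pi_{r,\infty}$, while $C_{imp}$ is spanned by the $[g\beta,v]$ ($g\in G_{S}$, $v\in\sigma_{r}$), whose images are $g\cdot\overline{[\beta,v]}\in\pi_{r,0}$. Thus $\overline{C_{p}}\subseteq\pi_{r,\infty}$ and $\overline{C_{imp}}\subseteq\pi_{r,0}$, giving the two desired equalities and therefore $\pi(r,0,1)=\pi_{r,\infty}\oplus\pi_{r,0}$.

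The only ingredient that is not pure bookkeeping is thus the irreducibility of $Sym^{r}$ as an $SL_{2}(k_{F})$-representation for $0\le r\le p-1$: this is what guarantees that the $G_{S}$-orbit of a single elementary function already fills out $\sigma_{r}$ at each vertex, and hence that the two $G_{S}$-orbits on $X$ suffice to reconstruct $\overline{C_{p}}$ and $\overline{C_{imp}}$; everything else reduces to the combinatorics of the tree and the fact that $T_{r}$ swaps the two orbits. An alternative route avoids the tree entirely: one checks that $\pi_{r,\infty}\oplus\pi_{r,0}$, already $G_{S}$-stable, is moreover stable under $\alpha$ and under the diagonal matrices with diagonal entries $u$ and $1$ for $u\in\ZZ_{p}^{\times}$ — hence under all of $GL_{2}(\QQ_{p})$ — and then invokes the irreducibility of $\pi(r,0,1)$ as a $GL_{2}(\QQ_{p})$-representation; there the small computations $\alpha\,v_{r,\infty}=\overline{[\beta,y^{r}]}$ and $\alpha\,v_{r,0}=\overline{[\mathrm{diag}(-1,1)\,\beta_{0}\,,\,x^{r}]}$ are the mild obstacle.
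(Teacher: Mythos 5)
Your proof is correct, but it is not the route taken in the paper. The paper (Annexe \ref{calcdecompo}) proves the surjectivity by invoking the irreducibility of $\pi(r,0,1)$ as a $G$-representation (équation (\ref{Grep}), due to Breuil), so that every element is a combination of $G$-translates of $v_{r,\infty}$ and $v_{r,0}$, and then runs a case analysis on $\det g$ modulo squares ($\det g$ a square, $\det g = p$, $\det g$ a non-square unit), pushing each case into $\pi_{r,\infty}$ or $\pi_{r,0}$ by explicit manipulations of elementary functions. You instead split the cokernel itself along the two $G_{S}$-orbits of vertices: Proposition \ref{decoupe} and (\ref{echange}) — which the paper exploits only for Lemme \ref{rescle} — give $\pi(r,0,1)=\overline{C_{p}}\oplus\overline{C_{imp}}$ as $G_{S}$-representations, and you then identify the two blocks with $\pi_{r,\infty}$ and $\pi_{r,0}$ using the irreducibility of $\sigma_{r}$ restricted to $K_{S}$ (acting through $SL_{2}(\FF_{p})$), which lets a single elementary vector generate the whole fibre at each vertex. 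This buys two things: you never need Breuil's irreducibility of $\pi(r,0,1)$, and you get the sharper structural statement that $\pi_{r,\infty}$ and $\pi_{r,0}$ are exactly the images of the even- and odd-supported functions, which also makes Corollaire \ref{slt} transparent; the paper's argument is in exchange a very short computation once Breuil's result is granted, and your closing ``alternative route'' (stability under $\alpha$ and $\mathrm{diag}(u,1)$ plus $G$-irreducibility) is in substance the paper's proof. If you write your version up, spell out the two small verifications you gloss over: that $\mathrm{Im}\,T_{r}=(\mathrm{Im}\,T_{r}\cap C_{p})\oplus(\mathrm{Im}\,T_{r}\cap C_{imp})$, which follows from (\ref{echange}) together with the directness of $C_{p}\oplus C_{imp}$, and that the functions supported at a single vertex $gx_{0}$ are exactly the $[g,v]$, $v\in\sigma_{r}$.
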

\begin{proof}
D'apr\`es le Lemme \ref{rescle}, la somme de droite est bien une somme directe. Par suite, on dispose d'une inclusion canonique de $\pi_{r, \infty} \oplus \pi_{r, 0}$ dans $\pi(r,0,1)$ (provenant des inclusions naturelles de $\pi_{r,\infty}$ et $\pi_{r,0}$ dans $\pi(r,0,1)$) qui est clairement injective, $\FFF_{p}$-lin\'eaire et $G_{S}$-\'equivariante. Le fait que tout \'el\'ement de $\pi(r,0,1)$ peut s'\'ecrire comme somme d'un \'el\'ement de $\pi_{r,0}$ et d'un \'el\'ement de $\pi_{r,\infty}$ s'obtient par un calcul facile d\'etaill\'e dans l'Annexe \ref{calcdecompo}. 
\end{proof}

\begin{cor}
Les seules sous-$G_{S}$-repr\'esentations irr\'eductibles non nulles contenues dans $\pi(r,0,1)$ sont $\pi_{r, \infty}$ et $\pi_{r,0}$.
\label{slt}
\end{cor}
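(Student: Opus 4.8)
The plan is to deduce Corollaire~\ref{slt} directly from the direct sum decomposition of Théorème~\ref{decomp} together with the irreducibility statement of Proposition~\ref{ired}.

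First I would take an arbitrary irreducible non-zero $G_{S}$-subrepresentation $\pi$ of $\pi(r,0,1)$ and apply Proposition~\ref{faitcrucial} to the pro-$p$-group $I_{S}(1)$: this gives a non-zero $I_{S}(1)$-invariant vector $v \in \pi$. By Théorème~\ref{BreuilMoi}, the space $\pi(r,0,1)^{I_{S}(1)}$ is the two-dimensional $\FFF_{p}$-space spanned by $v_{r,\infty}$ and $v_{r,0}$, so $v = a v_{r,\infty} + b v_{r,0}$ for some $a,b \in \FFF_{p}$ not both zero. The key point is then to argue that $v$ must be proportional to either $v_{r,\infty}$ or $v_{r,0}$. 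Using the decomposition $\pi(r,0,1) = \pi_{r,\infty} \oplus \pi_{r,0}$ of Théorème~\ref{decomp}, write $v = a v_{r,\infty} + b v_{r,0}$ with $a v_{r,\infty} \in \pi_{r,\infty}$ and $b v_{r,0} \in \pi_{r,0}$; since $\pi$ is $G_{S}$-stable and irreducible, applying elements of $G_{S}$ to $v$ and projecting onto the two summands shows that $\pi$ must be contained in one of the two summands (otherwise the projection of $\pi$ to each summand is a non-zero $G_{S}$-submodule, forcing $\pi$ to surject onto each of the irreducibles $\pi_{r,\infty}$ and $\pi_{r,0}$ while being irreducible itself, which by Schur-type reasoning and the triviality of $\pi_{r,0} \cap \pi_{r,\infty}$ from Lemme~\ref{rescle} is impossible unless one projection vanishes).

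More concretely, I would argue as follows. The projection $p_{\infty}: \pi(r,0,1) \to \pi_{r,\infty}$ along $\pi_{r,0}$ is $G_{S}$-equivariant, so $p_{\infty}(\pi)$ is a $G_{S}$-submodule of $\pi_{r,\infty}$; similarly $p_{0}(\pi)$ is a $G_{S}$-submodule of $\pi_{r,0}$. Since $\pi_{r,\infty}$ and $\pi_{r,0}$ are irreducible (Proposition~\ref{ired}), each projection is either $0$ or surjective. If $p_{0}(\pi) = 0$ then $\pi \subseteq \pi_{r,\infty}$, hence $\pi = \pi_{r,\infty}$ by irreducibility of both; symmetrically if $p_{\infty}(\pi) = 0$ then $\pi = \pi_{r,0}$. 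It remains to exclude the case where both projections are surjective. In that case the restrictions $p_{\infty}|_{\pi}$ and $p_{0}|_{\pi}$ are non-zero $G_{S}$-morphisms from the irreducible $\pi$ onto $\pi_{r,\infty}$ and $\pi_{r,0}$ respectively, hence isomorphisms; composing gives a $G_{S}$-isomorphism $\pi_{r,\infty} \simeq \pi \simeq \pi_{r,0}$, and $\pi$ would be the graph of this isomorphism inside $\pi_{r,\infty} \oplus \pi_{r,0}$. But then $\pi^{I_{S}(1)}$ would also be one-dimensional, spanned by a vector $a v_{r,\infty} + b v_{r,0}$ with $a,b \neq 0$; the isomorphism $\pi_{r,\infty} \simeq \pi_{r,0}$ it induces would have to send $v_{r,\infty}$ (the generator of $\pi_{r,\infty}^{I_{S}(1)}$ by Proposition~\ref{dim1}) to a scalar multiple of $v_{r,0}$. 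This is where I expect the actual obstacle to lie: one needs to rule out $\pi_{r,\infty} \simeq \pi_{r,0}$ as $G_{S}$-representations, or at least rule out the existence of such a graph submodule of $\pi(r,0,1)$. The cleanest way around it is to note that $v_{r,\infty}$ is supported on the vertex $x_{0} \in X_{p}$ while $v_{r,0}$ is supported on $x_{1} \in X_{imp}$ (as recorded in the proof of Lemme~\ref{rescle}), and that $\pi_{r,\infty}$, being generated by $v_{r,\infty}$, together with the action of $T_{r}$ interchanging $X_{p}$ and $X_{imp}$ (formula~(\ref{echange})), forces any $G_{S}$-map $\pi_{r,\infty} \to \pi_{r,0}$ sending $v_{r,\infty}$ to $\lambda v_{r,0}$ with $\lambda \neq 0$ to be incompatible with the fact that $v_{r,\infty} \in \pi_{r,\infty}$ and $v_{r,0} \in \pi_{r,0}$ have disjoint $I_{S}(1)$-invariant lines inside the ambient $\pi(r,0,1)$ — more simply still, such a graph submodule $\pi$ would satisfy $\pi \cap \pi_{r,\infty} = 0$, contradicting that $\pi \cap \pi_{r,\infty}$ is a $G_{S}$-submodule of the irreducible $\pi$ containing... — here one checks directly that the graph meets neither summand trivially only if the summands are isomorphic, and the disjointness $\pi_{r,0} \cap \pi_{r,\infty} = \{0\}$ from Lemme~\ref{rescle} combined with the explicit supports gives the contradiction.

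In summary, the proof reduces to: (i) produce an $I_{S}(1)$-invariant vector in $\pi$ via Proposition~\ref{faitcrucial}; (ii) use Théorème~\ref{BreuilMoi} to place it in $\FFF_{p}v_{r,\infty} \oplus \FFF_{p}v_{r,0}$; (iii) use the $G_{S}$-equivariant projections coming from the decomposition $\pi(r,0,1) = \pi_{r,\infty} \oplus \pi_{r,0}$ of Théorème~\ref{decomp}, together with the irreducibility of the two summands (Proposition~\ref{ired}) and the disjointness from Lemme~\ref{rescle}, to conclude that $\pi$ equals $\pi_{r,\infty}$ or $\pi_{r,0}$. The main subtlety is step (iii), specifically excluding a diagonal ``graph'' copy, which is handled by the fact that $\pi_{r,\infty}$ and $\pi_{r,0}$ have distinct lines of $I_{S}(1)$-invariants and, ultimately, by the support considerations on the Bruhat--Tits tree that already underlie the proof of Lemme~\ref{rescle}.
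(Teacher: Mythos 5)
Your reduction via the two $G_{S}$-equivariant projections onto the summands of Th\'eor\`eme \ref{decomp} is a legitimate variant of the paper's argument (the paper instead decomposes $\pi = (\pi\cap\pi_{r,\infty})\oplus(\pi\cap\pi_{r,0})$ and concludes with le Lemme \ref{rescle} et la Proposition \ref{ired}), and your steps (i)--(ii) are correct though not actually needed. The genuine gap is exactly where you locate it: the exclusion of a diagonal (graph) submodule is not achieved by anything you propose. If $\phi\colon \pi_{r,\infty}\to\pi_{r,0}$ is a non-zero $G_{S}$-morphism, its graph $\Gamma=\{x+\phi(x)\}$ is an irreducible $G_{S}$-submodule of $\pi(r,0,1)$ meeting \emph{both} summands in $\{0\}$; this is perfectly compatible with le Lemme \ref{rescle}, which only concerns $\pi_{r,\infty}\cap\pi_{r,0}$, and with your tree-support considerations, since $\Gamma^{I_{S}(1)}$ is spanned by $v_{r,\infty}+\lambda v_{r,0}$ with $\lambda\neq 0$, a vector supported on both $x_{0}$ and $x_{1}$, to which the parity argument underlying le Lemme \ref{rescle} simply does not apply. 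Your sentence trying to extract a contradiction from $\Gamma\cap\pi_{r,\infty}=\{0\}$ trails off precisely because there is none: a submodule of an irreducible representation is allowed to be zero.

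Moreover, the missing ingredient cannot be ``ruling out $\pi_{r,\infty}\simeq\pi_{r,0}$'' in general: the paper itself establishes this isomorphism when $r=\frac{p-1}{2}$ (isomorphisme (\ref{isomp1}), Section \ref{compacmax}), so for that value of $r$ diagonal irreducible submodules really do exist inside $\pi(r,0,1)$, and the statement can only be salvaged up to isomorphism. For $r\neq\frac{p-1}{2}$ your scheme can be completed, but only by importing $\pi_{r,\infty}\not\simeq\pi_{r,0}$, i.e.\ the results of la Proposition \ref{nonisomgener} and of la Section \ref{compacmax} (Iwahori character for $r\notin\{0,\frac{p-1}{2},p-1\}$, the $K_{S}$-invariant computation for $r\in\{0,p-1\}$), which come later in the paper and are not invoked in your proposal. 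To be fair, the paper's own proof hides the same difficulty: the assertion $\pi=(\pi\cap\pi_{r,\infty})\oplus(\pi\cap\pi_{r,0})$ for an arbitrary submodule of a direct sum is exactly what fails for a graph. So you have identified a real subtle point, but your proof, as written, does not close it.
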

\begin{proof}
Soit $\pi$ une telle sous-repr\'esentation. D'apr\`es le Th\'eor\`eme \ref{Grep}, on peut la d\'ecomposer sous la forme $\pi = (\pi \cap \pi_{r,\infty}) \oplus (\pi \cap \pi_{r, 0}$). Chaque terme de cette somme directe est une sous-$G_{S}$-repr\'esentation de $\pi$, donc est nulle ou \'egale \`a $\pi$ par irr\'eductibilit\'e de $\pi$. Mais $\pi$ \'etant non nulle, le Lemme \ref{rescle} implique que l'un des deux membres est n\'ecessairement nul tandis que l'autre est par suite \'egal \`a $\pi$.\\
Par ailleurs, la Proposition \ref{ired} assure que $\pi_{r, \infty}$ et $\pi_{r,0}$ sont elles aussi irr\'eductibles. On en d\'eduit que $\pi \cap \pi_{r, \infty}$ (resp. $\pi \cap \pi_{r, 0}$) doit \^etre nulle ou \'egale \`a $\pi_{r, \infty}$ (resp. $\pi_{r, 0}$). La comparaison des deux expressions de $\pi \cap \pi_{r, \infty}$ (resp. $\pi \cap \pi_{r,0}$) ainsi obtenues prouve alors que l'on doit avoir $\pi = \pi_{r, \infty}$ ou $\pi = \pi_{r,0}$, ce qui termine la preuve.
\end{proof}

\subsection{Isomorphismes entre les repr\'esentations $\pi_{r,\lambda}$}
\label{isomgs}
On se propose dans cette section de d\'eterminer tous les entrelacements pouvant exister entre les repr\'esentations $\pi_{r,z}$ ($ 0 \leq r \leq p-1$, $z \in \{0, \infty\}$).\\ 
Rappelons tout d'abord que Breuil a d\'emontr\'e les r\'esultats suivants pour les repr\'esentations supersinguli\`eres de $G$ \cite[Corollaires 4.1.3 et 4.1.5]{Br1}.
\begin{thm} Soit $r \in \{0, \ldots p-1\}$.
\begin{enumerate}
\item Il existe un unique isomorphisme de $G$-repr\'esentations
$$ \pi(r,0,1) \simeq \pi(p-1-r, 0, \omega^{r})$$
envoyant $v_{r,\infty}$ sur $v_{p-1-r, 0}$.
\item Les seules $G$-repr\'esentations isomorphes \`a $\pi(r,0,1)$ sont les suivantes : 
$$\pi(r,0,1) \ ; \ \pi(r,0, \mu_{-1}) \ ; \ \pi(p-1-r, 0, \omega^{r}) \ ; \ \pi(p-1-r, 0, \mu_{-1}\omega^{r}) \ .$$
\end{enumerate}
\label{ssgiso}
\end{thm}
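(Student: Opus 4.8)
This is Breuil's result \cite[Corollaires 4.1.3 et 4.1.5]{Br1}, which I would recover from Theorem \ref{Breuil} and from Barthel--Livn\'e's presentation of the supersingular representations. The plan is to first build the isomorphism of $(1)$ from the universal property of $\text{c-ind}$ after pinning down how the Iwahori acts on the two $I(1)$-eigenlines, and then to obtain the list in $(2)$ by a twisting argument combined with a comparison of central characters and of the Serre weights of each $\pi(s,0,\chi)$. For the first point, the key preliminary computation takes place inside $\pi(p-1-r,0,\omega^{r})$, which by definition is a quotient of $\text{c-ind}_{KZ}^{G}(\sigma_{p-1-r}\otimes(\omega^{r}\circ\det))$. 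Applying Theorem \ref{Breuil} to the parameter $p-1-r$ (and using that $\omega^{r}$ is trivial on $\det I(1)$), its space of $I(1)$-invariants is spanned by $\overline{[I_{2},x^{p-1-r}]}$ and $w:=\overline{[\beta,x^{p-1-r}]}$; a direct calculation with the relations $g\cdot[g_{1},v]=[gg_{1},v]$ and $[gk,v]=[g,\sigma(k)v]$, the identity $\mathrm{diag}(a,d)\beta=\beta\,\mathrm{diag}(d,a)$, and the relation $\bar d^{\,p-1}=1$ in $\FFF_{p}$, shows that the torus $I/I(1)$ acts on $w$ by $\mathrm{diag}(a,d)\mapsto\bar a^{\,r}$, which is exactly the character by which it acts on $v_{r,\infty}=\overline{[I_{2},x^{r}]}\in\pi(r,0,1)$ and is different from the character $\mathrm{diag}(a,d)\mapsto\bar d^{\,r}$ by which it acts on $\overline{[I_{2},x^{p-1-r}]}$ as soon as $r\neq 0$ (the cases $r\in\{0,p-1\}$ being elementary variants).

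Now the stabiliser of the highest weight vector $x^{r}$ in $KZ$ is generated by $I(1)$, the diagonal matrices $\mathrm{diag}(a,d)$ with $\bar a^{\,r}=1$, and the central units (which fix $x^{r}$); by the previous paragraph $w$ is fixed by all of these, so $\sigma_{r}(k)x^{r}\mapsto k\cdot w$ defines a nonzero, hence injective, morphism of $KZ$-representations $\sigma_{r}\hookrightarrow\pi(p-1-r,0,\omega^{r})$. Frobenius reciprocity turns it into a nonzero $G$-morphism $J:\text{c-ind}_{KZ}^{G}(\sigma_{r})\to\pi(p-1-r,0,\omega^{r})$ with $J([I_{2},x^{r}])=w$. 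The element $J(T_{r}[I_{2},x^{r}])$ is $I(1)$-fixed and transforms under $I/I(1)$ by $\mathrm{diag}(a,d)\mapsto\bar a^{\,r}$, hence equals $c\,w$ for some $c\in\FFF_{p}$; since $[I_{2},x^{r}]$ generates $\text{c-ind}_{KZ}^{G}(\sigma_{r})$ and $J$ is $G$-equivariant, this forces $J\circ T_{r}=c\,J$. If $c\neq 0$, then $T_{r}$ acts by the scalar $c$ on the irreducible quotient $\mathrm{Im}\,J=\pi(p-1-r,0,\omega^{r})$, which would therefore be a quotient of $\pi(r,c,1)$ and hence a principal series or a special representation by \cite[Theorem 30]{BL1} --- impossible, as it is supersingular. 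So $c=0$, i.e. $J\circ T_{r}=0$, and $J$ factors through $\pi(r,0,1)=\text{c-ind}_{KZ}^{G}(\sigma_{r})/\mathrm{Im}\,T_{r}$, giving a surjection $\overline{J}:\pi(r,0,1)\twoheadrightarrow\pi(p-1-r,0,\omega^{r})$ with $\overline{J}(v_{r,\infty})=w=v_{p-1-r,0}$; it is an isomorphism because $\pi(r,0,1)$ is irreducible, and it is the unique isomorphism with this property since $\dim_{\FFF_{p}}\mathrm{Hom}_{G}(\pi(r,0,1),\pi(p-1-r,0,\omega^{r}))=1$ by Schur's lemma. This proves $(1)$.

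For $(2)$, I would first observe that $\mu_{-1}\circ\det$ is trivial on $KZ$ and scales the Hecke operator $T_{s}$ by $\mu_{-1}(\varpi_{F})=-1$, so tensoring $\pi(s,0,\chi)$ by $\mu_{-1}\circ\det$ leaves $\mathrm{Im}\,T_{s}$ unchanged and yields $\pi(s,0,\mu_{-1}\chi)\simeq\pi(s,0,\chi)$; combined with $(1)$ this shows that the four representations listed are all isomorphic to $\pi(r,0,1)$. Conversely, assume $\pi(s,0,\chi)\simeq\pi(r,0,1)$. Equality of central characters forces $\chi|_{\mathcal{O}_{F}^{\times}}=\omega^{r-s}$ and $\chi(\varpi_{F})^{2}=1$, i.e. $\chi=\omega^{r-s}\mu_{\pm 1}$; and equality of the (at most two) Serre weights occurring in the $KZ$-socle --- which for $\pi(r,0,1)$ are $\sigma_{r}$ and a twist of $\sigma_{p-1-r}$, as computed in \cite{Br1,BL1} --- forces $s\in\{r,p-1-r\}$. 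These two constraints leave exactly the four pairs $(s,\chi)$ appearing in the statement, which finishes the proof.

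The main obstacle is the eigencharacter computation of the first paragraph: it is what produces the embedding $\sigma_{r}\hookrightarrow\pi(p-1-r,0,\omega^{r})$, and hence the symmetry $r\leftrightarrow p-1-r$, in the first place. Once it is available, the rest is either a quotient-of-$\text{c-ind}$ argument already contained in \cite{BL1} or formal twisting and bookkeeping of central characters and Serre weights.
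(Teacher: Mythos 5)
The paper itself offers no proof of this statement: Théorème \ref{ssgiso} is quoted verbatim from Breuil \cite[Corollaires 4.1.3 et 4.1.5]{Br1}, so your proposal has to stand on its own, and its crucial step does not. In part (1) you assert that, because $w=\overline{[\beta,x^{p-1-r}]}$ is fixed by $\mathrm{Stab}_{KZ}(x^{r})$, the assignment $\sigma_{r}(k)x^{r}\mapsto k\cdot w$ is a well-defined morphism of $KZ$-representations $\sigma_{r}\to\pi(p-1-r,0,\omega^{r})$. Fixedness under the stabiliser only makes the map well defined on the \emph{orbit} of $x^{r}$; it says nothing about the linear relations among the vectors $\sigma_{r}(k)x^{r}$, which the vectors $k\cdot w$ must also satisfy. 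What your eigencharacter computation plus Frobenius reciprocity actually produces is a nonzero $K$-morphism $\mathrm{Ind}_{I}^{K}(\tilde\chi_{r})\to\pi(p-1-r,0,\omega^{r})$, where $\tilde\chi_{r}$ is the character $\mathrm{diag}(a,d)\mapsto\bar{a}^{r}$ of $I$, i.e.\ a morphism from the full $(p+1)$-dimensional principal series of $GL_{2}(\FF_{p})$, whose Jordan--H\"older factors are $\sigma_{r}$ and $\sigma_{p-1-r}\otimes\det^{r}$; to obtain your $J$ one must still show that this morphism factors through $\sigma_{r}$, i.e.\ that the $K$-subrepresentation generated by $w$ is $\sigma_{r}$ and not a larger quotient nor the other weight. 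That your principle is false can be seen inside $\pi(r,0,1)$ itself: $v_{r,0}=\overline{[\beta,x^{r}]}$ is fixed by $\mathrm{Stab}_{KZ}(y^{r})$ (it is invariant under the lower as well as the upper unipotent matrices, by the remark following Théorème \ref{Breuil}, and the torus acts on it by $\bar{d}^{r}$), yet $y^{r}\mapsto v_{r,0}$ extends to no $KZ$-morphism $\sigma_{r}\to\pi(r,0,1)$, because $v_{r,0}$ generates the weight $\sigma_{p-1-r}\otimes\det^{r}$, not $\sigma_{r}$. Determining the $K$-representation generated by $w$ is exactly the explicit computation modulo the image of $T_{p-1-r}$ that Breuil performs; replacing it by the Iwahori eigencharacter keeps what is necessary but discards what is sufficient, and this is where the symmetry $r\leftrightarrow p-1-r$ really gets proved.

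Once that embedding is granted, your chain ``$J\circ T_{r}=cJ$, then $c=0$ by supersingularity and \cite[Theorem 30]{BL1}, then irreducibility gives the isomorphism, unique by Schur'' is sound for $0<r<p-1$; but note that for $r\in\{0,p-1\}$ the two eigencharacters $\bar{a}^{r}$ and $\bar{d}^{r}$ coincide, so the eigenspace argument pinning $J(T_{r}[I_{2},x^{r}])$ to the line $\FFF_{p}w$ collapses and these cases are not mere ``elementary variants''. In part (2), the central character comparison gives $\chi^{2}\vert_{\mathcal{O}_{F}^{\times}}=\omega^{r-s}$ rather than $\chi\vert_{\mathcal{O}_{F}^{\times}}=\omega^{r-s}$, and the step forcing $s\in\{r,p-1-r\}$ via the $KZ$-socle of $\pi(r,0,1)$ appeals to the very socle computation left unproved above (or to the theorem itself), so as written the argument is circular at that point as well.
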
 
On note $\omega$ le caract\`ere cyclotomique modulo $p$ vu comme caract\`ere de $\QQ_{p}^{\times}$ via l'isomorphisme de r\'eciprocit\'e (faisant correspondre les uniformisantes aux Frobenius g\'eom\'etriques) et on rappelle que $\mu_{-1}$ d\'esigne le caract\`ere non ramifi\'e de $\QQ_{p}^{\times}$ envoyant $p$ sur $-1$.\\

Remarquons ici que, par construction de nos repr\'esentations supersinguli\`eres, un isomorphisme de $G_{S}$-repr\'esentations $\pi_{r,z} \simeq \pi_{s,y}$ est uniquement d\'etermin\'e par l'image de $v_{r,z}$ qui doit \^etre un \'el\'ement de $\pi_{s,y}^{I_{S}(1)}$. La Proposition \ref{dim1} implique alors en particulier que l'espace d'entrelacements entre $\pi_{r,z}$ et $\pi_{s,y}$ est de dimension (sur $\FFF_{p}$) major\'ee par 1.\\

D\'eterminons les entrelacements existants entre deux repr\'esentations supersinguli\`eres $\pi_{r,z}$ et $\pi_{s,y}$, o\`u $(r,z)$ et $(s,y)$ sont deux paires arbitraires de param\`etres. On commence tout d'abord par remarquer que le premier point du Th\'eor\`eme \ref{ssgiso} implique l'existence d'un isomorphisme de $G_{S}$-repr\'esentations
\begin{equation} 
 \pi_{r,\infty} \simeq \pi_{p-1-r, 0}
\label{isomp1}
\end{equation}
pour tout $r \in \{0, \ldots, p-1\}$.  En faisant agir des sous-groupes bien choisis de $G_{S}$ sur de bons espaces de vecteurs invariants, nous allons montrer qu'il n'existe pas d'autre entrelacement possible. De nouveau, on fixe $r \in \{0, \ldots , p-1\}$.

\subsubsection{Action du sous-groupe d'Iwahori de $G_{S}$}
Les vecteurs $v_{r,0}$ et $v_{r,\infty}$ sont fixes sous l'action de $I_{S}(1)$, de sorte que le groupe $I_{S}$ agit sur eux via le quotient $I_{S} / I_{S}(1)$ qui est isomorphe au tore diagonal de $SL_{2}(\mathbb{F}_{p})$, lui-m\^eme isomorphe au groupe cyclique $\mathbb{F}_{p}^{\times}$. Par suite, l'action de $I_{S}$ sur $v_{r,0}$ et $v_{r,\infty}$ sera donn\'ee par des caract\`eres lisses $\mathbb{F}_{p}^{\times} \to \FFF_{p}^{\times}$.\\
Notons $\omega_{1}$ le caract\`ere fondamental de Serre de niveau 1 (\cite{Vig06}, section 2.3). Pour tout $\lambda \in \mathbb{F}_{p}^{\times}$, on a alors :
$$ \left( \begin{array}{cc} \lambda & 0 \\ 0 & \lambda^{-1} \end{array} \right) [I_{2} , x^{r}] = [I_{2}, \sigma_{r}( \left( \begin{array}{cc} \lambda & 0 \\ 0 & \lambda^{-1} \end{array} \right))(x^{r})] = \lambda^{r} [I_{2} , x^{r}] $$
de sorte que $I_{S}$ agit sur $v_{r, \infty}$ par le caract\`ere $\omega_{1}^{r}$, tandis que 
$$ \left( \begin{array}{cc} \lambda & 0 \\ 0 & \lambda^{-1} \end{array} \right) [\beta , x^{r}] = [\beta , \sigma_{r}( \left( \begin{array}{cc} \lambda^{-1} & 0 \\ 0 & \lambda \end{array} \right))(x^{r})] = \lambda^{-r} [\beta , x^{r}] $$
de sorte que $I_{S}$ agit sur $v_{r,0}$ par $\omega_{1}^{-r}$.\\
Sachant que les $I_{S}(1)$-invariants de $\pi_{r,\infty}$ et de $\pi_{r,0}$ sont de dimension 1 sur $\FFF_{p}$ (Proposition \ref{dim1}), on a donc prouv\'e le r\'esultat suivant.
\begin{prop}
Le sous-groupe d'Iwahori $I_{S}$ de $G_{S}$ agit sur $\pi_{r,\infty}^{I_{S}(1)}$ (resp.  $\pi_{r,0}^{I_{S}(1)}$) par le caract\`ere $\omega_{1}^{r}$ (resp. $\omega_{1}^{-r}$), o\`u $\omega_{1}$ d\'esigne le caract\`ere fondamental de Serre de niveau 1.
\label{actioniwahori}
\end{prop}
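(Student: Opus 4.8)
The plan is to read the statement off the two element-level computations carried out just above, Proposition~\ref{dim1} being what lets us pass from the chosen generators to the full invariant spaces. Since $v_{r,\infty}$ and $v_{r,0}$ are fixed by the pro-$p$-Iwahori $I_{S}(1)$, the Iwahori $I_{S}$ acts on each of them through the finite quotient $I_{S}/I_{S}(1)$, which identifies with the diagonal torus of $SL_{2}(\mathbb{F}_{p})$, hence with $\mathbb{F}_{p}^{\times}$ acting by $\lambda \mapsto \left(\begin{array}{cc}\lambda & 0 \\ 0 & \lambda^{-1}\end{array}\right)$. It is therefore enough to compute the scalar by which such an element acts on $v_{r,\infty}$ and on $v_{r,0}$, and to recall that the fundamental Serre character $\omega_{1}$ of level $1$ is here simply the tautological inclusion $\mathbb{F}_{p}^{\times} \hookrightarrow \FFF_{p}^{\times}$ (since $k_{F} = \mathbb{F}_{p}$).

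For $v_{r,\infty} = \overline{[I_{2},x^{r}]}$ the matrix $\left(\begin{array}{cc}\lambda & 0 \\ 0 & \lambda^{-1}\end{array}\right)$ already lies in $K$, so the relation $[gk,v] = [g,\sigma_{r}(k)(v)]$ (valid for $k \in KZ$) gives $\left(\begin{array}{cc}\lambda & 0 \\ 0 & \lambda^{-1}\end{array}\right) \cdot [I_{2},x^{r}] = [I_{2}, \sigma_{r}(\left(\begin{array}{cc}\lambda & 0 \\ 0 & \lambda^{-1}\end{array}\right))(x^{r})] = \lambda^{r}\,[I_{2},x^{r}]$, since $\sigma_{r}$ of that diagonal matrix sends $x^{r}$ to $\lambda^{r}x^{r}$. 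Hence $I_{S}$ acts on $v_{r,\infty}$ by $\lambda \mapsto \lambda^{r}$, that is, by $\omega_{1}^{r}$.

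For $v_{r,0} = \overline{[\beta,x^{r}]}$ one first pushes the torus element past $\beta$: a direct matrix computation --- the one genuinely computational point, and the place where the exponent changes sign --- gives $\left(\begin{array}{cc}\lambda & 0 \\ 0 & \lambda^{-1}\end{array}\right)\beta = \beta \left(\begin{array}{cc}\lambda^{-1} & 0 \\ 0 & \lambda\end{array}\right)$ with $\left(\begin{array}{cc}\lambda^{-1} & 0 \\ 0 & \lambda\end{array}\right) \in K$, so the same relation yields $\left(\begin{array}{cc}\lambda & 0 \\ 0 & \lambda^{-1}\end{array}\right) \cdot [\beta,x^{r}] = [\beta, \sigma_{r}(\left(\begin{array}{cc}\lambda^{-1} & 0 \\ 0 & \lambda\end{array}\right))(x^{r})] = \lambda^{-r}\,[\beta,x^{r}]$, so $I_{S}$ acts on $v_{r,0}$ by $\omega_{1}^{-r}$. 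Finally, Proposition~\ref{dim1} identifies $\pi_{r,\infty}^{I_{S}(1)}$ with the line $\FFF_{p}v_{r,\infty}$ and $\pi_{r,0}^{I_{S}(1)}$ with $\FFF_{p}v_{r,0}$, so the character computed on each generator is the character of $I_{S}$ on the whole invariant line. There is no real obstacle here: everything reduces to the two $K$-equivariance computations above together with the one-dimensionality already established.
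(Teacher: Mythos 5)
Your argument is correct and is essentially the paper's own proof: pass to the quotient $I_{S}/I_{S}(1) \simeq \mathbb{F}_{p}^{\times}$, compute the action of $\left(\begin{smallmatrix}\lambda & 0\\ 0 & \lambda^{-1}\end{smallmatrix}\right)$ on $[I_{2},x^{r}]$ and $[\beta,x^{r}]$ via the relation $[gk,v]=[g,\sigma_{r}(k)(v)]$ (with the same conjugation of the diagonal element past $\beta$ that flips the sign of the exponent), and invoke Proposition~\ref{dim1} to extend the computation on the generators to the full one-dimensional invariant spaces. Nothing to add.
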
 
Ceci nous permet d'obtenir la non-existence d'un grand nombre d'isomorphismes.
\begin{prop}
Soient $r$, $s \in \{0, \ldots, p-1\}$ avec $r \not\in \{0,  \displaystyle \frac{p-1}{2}, p-1\}$.
\begin{enumerate}
\item Si $s \not= r$, alors $\pi_{r, \infty}$ et $\pi_{s, \infty}$ (respectivement : $\pi_{r,0}$ et $\pi_{s,0}$) ne sont pas isomorphes.
\item Si $s \not= p-1-r$, alors $\pi_{r, \infty}$ et $\pi_{s, 0}$ ne sont pas isomorphes.
\end{enumerate}
\label{nonisomgener}
\end{prop}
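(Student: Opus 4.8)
The plan is to combine Proposition~\ref{actioniwahori} with the one-dimensionality of the spaces of $I_{S}(1)$-invariants. First I would note that $I_{S}(1)$ is normal in $I_{S}$, so for any $z, y \in \{0, \infty\}$ a $G_{S}$-equivariant isomorphism $\varphi : \pi_{r,z} \xrightarrow{\sim} \pi_{s,y}$ restricts to an $I_{S}$-equivariant isomorphism $\pi_{r,z}^{I_{S}(1)} \xrightarrow{\sim} \pi_{s,y}^{I_{S}(1)}$: indeed $\varphi$ and $\varphi^{-1}$ are $I_{S}(1)$-equivariant, so $\varphi$ maps one invariant space bijectively onto the other, and $I_{S}$ preserves these invariant lines because it normalizes $I_{S}(1)$. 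By Proposition~\ref{dim1} these spaces are one-dimensional over $\FFF_{p}$, and by Proposition~\ref{actioniwahori} the group $I_{S}$ acts on them through the characters $\omega_{1}^{\varepsilon(z)\, r}$ and $\omega_{1}^{\varepsilon(y)\, s}$ of $\FF_{p}^{\times}$, where I write $\varepsilon(\infty) = 1$ and $\varepsilon(0) = -1$. Hence the mere existence of $\varphi$ forces the equality $\omega_{1}^{\varepsilon(z)\, r} = \omega_{1}^{\varepsilon(y)\, s}$ in the character group of $\FF_{p}^{\times}$.

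Since $\omega_{1}$ is the fundamental character $\lambda \mapsto \iota(\lambda)$ of $\FF_{p}^{\times}$, it is injective, hence of exact order $p-1$; consequently $\omega_{1}^{a} = \omega_{1}^{b}$ if and only if $a \equiv b \pmod{p-1}$. This turns the obstruction above into an elementary congruence. For the first assertion, an isomorphism $\pi_{r,\infty} \simeq \pi_{s,\infty}$ would give $r \equiv s \pmod{p-1}$; as $r, s \in \{0, \ldots, p-1\}$ this means $r = s$ or $\{r,s\} = \{0, p-1\}$, and the latter is ruled out by the hypothesis $r \notin \{0, p-1\}$, so $r = s$, contradicting $s \neq r$. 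The case of $\pi_{r,0} \simeq \pi_{s,0}$ is identical after replacing $r$ and $s$ by $-r$ and $-s$. For the second assertion, an isomorphism $\pi_{r,\infty} \simeq \pi_{s,0}$ would give $r + s \equiv 0 \pmod{p-1}$; since $0 \le r + s \le 2(p-1)$, the only possibilities are $r+s \in \{0,\, p-1,\, 2(p-1)\}$, and $r + s = 0$ forces $r = 0$ while $r + s = 2(p-1)$ forces $r = p-1$, both excluded, so $r + s = p-1$, i.e. $s = p-1-r$, contradicting $s \neq p-1-r$.

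The only point requiring genuine care is the bookkeeping near the excluded values of $r$: the hypothesis $r \notin \{0, (p-1)/2, p-1\}$ is exactly what prevents the coincidences $\omega_{1}^{0} = \omega_{1}^{p-1}$ and $\omega_{1}^{(p-1)/2} = \omega_{1}^{-(p-1)/2}$ from creating isomorphisms that the character argument cannot detect; for those three values additional isomorphisms genuinely occur (for instance $(\ref{isomp1})$ already identifies $\pi_{(p-1)/2,\infty}$ with $\pi_{(p-1)/2,0}$), and they have to be treated separately. Apart from that, everything is formal: functoriality of $(-)^{I_{S}(1)}$ with respect to the normal subgroup $I_{S}(1) \triangleleft I_{S}$, together with arithmetic in $\ZZ/(p-1)\ZZ$; no further information about the representations $\pi_{r,z}$ is needed beyond Propositions~\ref{dim1} and~\ref{actioniwahori}.
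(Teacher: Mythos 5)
Votre preuve est correcte et suit essentiellement la même démarche que celle de l'article : on compare les caractères $\omega_{1}^{\pm r}$ et $\omega_{1}^{\pm s}$ par lesquels $I_{S}$ agit sur les droites de vecteurs $I_{S}(1)$-invariants (Propositions \ref{dim1} et \ref{actioniwahori}), puis on résout la congruence modulo $p-1$ en utilisant l'hypothèse $r \notin \{0, p-1\}$. Votre justification explicite du fait qu'un isomorphisme de $G_{S}$-représentations induit une égalité de caractères d'Iwahori (via la normalité de $I_{S}(1)$ dans $I_{S}$) est un détail que l'article laisse implicite, mais l'argument est identique.
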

\begin{proof}
Nous allons prouver ces deux assertions par contraposition en remarquant que si deux des repr\'esentations de l'\'enonc\'e sont isomorphes, alors $I_{S}$ doit agir par le m\^eme caract\`ere sur leurs espaces de vecteurs $I_{S}(1)$-invariants.\\
Supposons tout d'abord que $\omega_{1}^{r} = \omega_{1}^{s}$, ce qui est une condition n\'ecessaire pour que $\pi_{r, \infty}$ et $\pi_{s, \infty}$ soient des repr\'esentations isomorphes de $G_{S}$. Cela signifie que $\omega_{1}^{r-s}$ est le caract\`ere trivial, donc que $p-1$ divise $r-s$ (car $\omega_{1}$ est d'ordre $p-1$). Comme $r$ et $s$ sont tous deux compris entre $0$ et $p-1$, les seules valeurs de $r-s$ divisibles par $p-1$ sont $-(p-1)$, $0$ et $(p-1)$. Les premi\`ere et derni\`ere valeurs sont exclues car on a suppos\'e $r \not=0$ et $r \not= p-1$. On en d\'eduit donc que $r = s$, d'o\`u la premi\`ere assertion pour $\pi_{r,\infty}$ et $\pi_{s, \infty}$. Le r\'esultat pour $\pi_{r,0}$ et $\pi_{s,0}$ s'obtient de m\^eme en travaillant avec l'exposant $-r+s$.\\
Pour la seconde assertion, nous devons cette fois avoir $\omega_{1}^{r} = \omega_{1}^{-s}$, ce qui implique que $p-1$ divise $r+s$. De nouveau, comme $r$ et $s$ sont dans $\{0, \ldots ,p-1\}$, les seules valeurs de $r+s$ divisibles par $p-1$ sont $0$, $p-1$ et $2(p-1)$. La premi\`ere valeur est exclue car elle n'est atteinte que pour $r = 0 = s$ (et l'on a suppos\'e $r \not= 0$); la derni\`ere valeur est exclue car elle n'est atteinte que pour $r = p-1 = s$ (et l'on a suppos\'e $r \not= p-1$). Il faut donc que $r+s = p-1$, ce qui revient \`a dire que $s = p-1-r$.  
\end{proof}

\subsubsection{Action du sous-groupe compact maximal $K_{S}$}
\label{compacmax}
Il nous reste maintenant \`a traiter le cas o\`u $r \in \{0, \frac{p-1}{2}, p-1\}$. Lorsque $r = \frac{p-1}{2}$, l'isomorphisme (\ref{isomp1}) donne 
$$ \pi_{\frac{p-1}{2}, \infty} \simeq \pi_{\frac{p-1}{2}, 0}$$
donc les deux sous-$G_{S}$-repr\'esentations de $\pi(r,0,1)$ obtenues par le Corollaire \ref{slt} sont isomorphes.\\
Remarquons en outre que le cas $r = p-1$ se d\'eduit du cas $r = 0$ gr\^ace \`a l'isomorphisme (\ref{isomp1}), de sorte qu'il ne nous reste qu'\`a \'etudier le cas o\`u $r = 0$. Nous allons montrer que les espaces de vecteurs $K_{S}$-invariants $\pi_{0, \infty}^{K_{S}}$ et $\pi_{0,0}^{K_{S}}$ sont alors de dimension diff\'erente sur $\FFF_{p}$ : ceci assurera qu'ils ne peuvent pas \^etre isomorphes, et donc que les repr\'esentations $\pi_{0,\infty}$ et $\pi_{0,0}$ sont n\'ecessairement non-isomorphes.
\begin{prop}
\begin{enumerate}
\item $\pi_{0,0}^{K_{S}} = \{0\} \ .$
\item $\pi_{0,\infty}^{K_{S}} = \pi_{0,\infty}^{I_{S}(1)} = \FFF_{p} v_{0,\infty} \ .$
\end{enumerate}
\end{prop}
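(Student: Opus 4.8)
The plan is to compute the two spaces of $K_{S}$-invariants directly, using that $\pi_{0,\infty}$ and $\pi_{0,0}$ are the $G_{S}$-subrepresentations of $\pi(0,0,1)$ generated by $v_{0,\infty} = \overline{[I_{2},1]}$ and $v_{0,0}=\overline{[\beta,1]}$ respectively (here $r=0$, so $\sigma_{0}$ is the trivial representation of $K_{S}Z_{S}$ and $x^{r}=1$). The key external inputs are Theorem \ref{BreuilMoi}, which gives $\pi(0,0,1)^{I_{S}(1)} = \FFF_{p}v_{0,\infty} \oplus \FFF_{p}v_{0,0}$, the decomposition $\pi(0,0,1) = \pi_{0,\infty} \oplus \pi_{0,0}$ of Theorem \ref{decomp}, and the fact (Proposition \ref{dim1}) that each $\pi_{0,z}^{I_{S}(1)}$ is one-dimensional, spanned by $v_{0,z}$.

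For the second assertion I would argue as follows. Any $K_{S}$-invariant vector is in particular $I_{S}(1)$-invariant, so by Proposition \ref{dim1} we have $\pi_{0,\infty}^{K_{S}} \subseteq \pi_{0,\infty}^{I_{S}(1)} = \FFF_{p}v_{0,\infty}$; conversely $v_{0,\infty} = \overline{[I_{2},1]}$ is fixed by $K_{S}Z_{S}$ because its support is the standard vertex $x_{0}$ and $\sigma_{0}$ is the trivial character of $K_{S}Z_{S}$, so $v_{0,\infty}$ is genuinely $K_{S}$-invariant. Hence $\pi_{0,\infty}^{K_{S}} = \FFF_{p}v_{0,\infty}$, which proves (2).

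For the first assertion, again $\pi_{0,0}^{K_{S}} \subseteq \pi_{0,0}^{I_{S}(1)} = \FFF_{p}v_{0,0}$, so it suffices to show that $v_{0,0} = \overline{[\beta,1]}$ is \emph{not} fixed by all of $K_{S}$. The vector $[\beta,1]$ has support the vertex $x_{1}=\alpha x_{0}$, whose stabiliser in $K_{S}$ is the Iwahori $I_{S}$; a matrix $k\in K_{S}\setminus I_{S}$ (for instance a lower-triangular unipotent $\begin{pmatrix}1&0\\varpi_{F}^{0}c&1\end{pmatrix}$ reducing nontrivially mod $\varpi_{F}$, or the element $s$) sends $x_{1}$ to a different vertex at distance $2$, so $k\cdot[\beta,1]$ and $[\beta,1]$ have disjoint supports in the tree, hence their difference is a nonzero element of $\mathrm{c\text{-}ind}_{K_{S}Z_{S}}^{G_{S}}(\sigma_{0})$. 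The real work is to check that $k\cdot v_{0,0} - v_{0,0}$ remains nonzero \emph{modulo} $T_{0}$, i.e. does not lie in $\mathrm{Im}\,T_{0}$; I would do this by exploiting the parity splitting of Proposition \ref{decoupe} exactly as in the proof of Lemma \ref{rescle}: both $x_{1}$ and $kx_{1}$ lie in $X_{imp}$, so if $k\cdot v_{0,0}-v_{0,0} = T_{0}(h)$ then writing $h=h_{p}+h_{imp}$ and using $T_{0}(X_{p})\subset X_{imp}$, $T_{0}(X_{imp})\subset X_{p}$ from (\ref{echange}), the $X_{p}$-part forces $T_{0}(h_{imp})=0$ and comparison on $X_{imp}$ pins down $T_{0}(h_{p})$ to a specific non-image-of-$T_0$ configuration — a contradiction unless $k\cdot v_{0,0}=v_{0,0}$ already in $\pi(0,0,1)$, which the support computation rules out. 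The main obstacle is precisely this last step: ruling out that the nonzero element $k\cdot[\beta,1]-[\beta,1]$ of the compact induction becomes zero after passing to the cokernel of $T_{0}$; the parity argument à la Lemme \ref{rescle} is what makes it go through cleanly, and I expect it to be the only delicate point, the rest being the routine support bookkeeping sketched above.
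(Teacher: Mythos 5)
Your reduction of both assertions to testing whether $v_{0,\infty}$ and $v_{0,0}$ are fixed by $K_{S}$ (via Proposition \ref{dim1} and the inclusion $\pi^{K_{S}} \subseteq \pi^{I_{S}(1)}$) is exactly the paper's argument, and your treatment of assertion (2) is correct and identical in substance: since $\sigma_{0}$ is trivial, property $\mathit{ii)}$ of the elementary functions gives $k\cdot[I_{2},1]=[I_{2},1]$ for all $k \in K_{S}$.

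For assertion (1), however, there is a genuine gap at precisely the point you flag as delicate. The parity trick of Lemma \ref{rescle} cannot rule out $k\cdot v_{0,0}-v_{0,0} \in \mathrm{Im}\,T_{0}$: writing a putative preimage as $h=h_{p}+h_{imp}$, the relations (\ref{echange}) only tell you that $T_{0}(h_{imp})=0$ and that $T_{0}(h_{p})$ must equal $k\cdot[\beta,1]-[\beta,1]$. But this last identity involves two functions that are \emph{both} supported in $X_{imp}$ (the image under $T_{0}$ of an even-supported function is odd-supported), so no parity contradiction can arise; in Lemma \ref{rescle} the trick worked only because the two sides of the equation had supports of opposite parity. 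The claimed ``specific non-image-of-$T_{0}$ configuration'' is exactly the statement still to be proved, and parity gives no access to it. What is actually needed — and what the paper invokes — is the explicit formula for $T_{0}$ (\cite[Formule (4) page 7]{Br1}) together with a support argument on the tree: if $T_{0}(h_{p})$ is supported in the ball of radius $1$, an outermost-circle argument (the ``away-from-origin'' terms of $T_{0}$ applied to the vertices of $\mathrm{supp}(h_{p})$ at maximal distance cannot cancel) forces $h_{p}$ to be supported at $x_{0}$ alone, hence $h_{p}=c\,[I_{2},1]$; but then $T_{0}(h_{p})$ has equal nonzero coefficients at all $p+1$ neighbours of $x_{0}$, which cannot coincide with $k\cdot[\beta,1]-[\beta,1]$, supported at exactly two such neighbours with coefficients $1$ and $-1$. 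Without this computation (or an equivalent one), your proof of $\pi_{0,0}^{K_{S}}=\{0\}$ is not complete; the rest of your support bookkeeping (stabiliser of $x_{1}$ in $K_{S}$ equal to $I_{S}$, disjointness of the supports of $k\cdot[\beta,1]$ and $[\beta,1]$) is correct.
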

\begin{proof}
Puisque $I_{S}(1)$ est un sous-groupe de $K_{S}$, on sait que pour toute repr\'esentation $\pi$ de $G_{S}$, $\pi^{K_{S}}$ est un sous-espace vectoriel de $\pi^{I_{S}(1)}$. D'apr\`es la Proposition \ref{dim1}, il nous suffit donc de tester si $v_{0, \infty}$ et $v_{0,0}$ sont fixes sous l'action de $K_{S}$. D'apr\`es la propri\'et\'e $ii)$ rappel\'ee \`a la fin de la Section \ref{hecke}, il est clair que $v_{0, \infty}$ est fixe sous l'action de $K_{S}$, ce qui nous donne la seconde assertion.\\
D'autre part, un calcul imm\'ediat et une comparaison avec \cite[Formule (4) page 7]{Br1} montrent que $\left( \begin{array}{cc} 1 & 1 \\ -1 & 0\end{array}\right) [\beta, 1] - [\beta, 1]$ n'appartient pas \`a l'image de l'op\'erateur $T_{0}$, ce qui signifie que $v_{0,0}$ n'est pas fixe sous l'action de $\left( \begin{array}{cc} 1 & 1 \\ -1 & 0\end{array}\right)$ et prouve donc la premi\`ere assertion.
\end{proof}

\subsection{Preuve du Th\'eor\`eme \ref{ssgqp}}
Remarquons tout d'abord que les repr\'esentations $\pi_{r, \infty}$ et $\pi_{r, 0}$ sont effectivement des repr\'esentations supersinguli\`eres de $G_{S}$: 
\begin{itemize}
\item le Th\'eor\`eme \ref{decomp} et les isomorphismes (\ref{isomp1}) assurent que $\pi_{r,\infty}$ et $\pi_{r,0}$ sont de dimension infinie, donc qu'elles ne peuvent \^etre isomorphes \`a un caract\`ere;
\item la Proposition \ref{dim1} assure que les repr\'esentations $\pi_{r,\infty}$ et $\pi_{r,0}$ ne peuvent pas \^etre isomorphes \`a des s\'eries principales, pas plus qu'\`a $Ind_{B_{S}}^{G_{S}}(\mathbf{1})$, car ces repr\'esentations ont des espaces de vecteurs $I_{S}(1)$-invariants de dimension 2 sur $\FFF_{p}$ (cf. Section \ref{classifnssg});
\item enfin, les repr\'esentations $\pi_{r,\infty}$ et $\pi_{r,0}$ ne peuvent \^etre isomorphes \`a $Sp_{S}$ car ceci entrainerait (par le Th\'eor\`eme \ref{dejairred} puis par r\'eciprocit\'e de Frobenius compacte) que $\pi(r,0,1)$ serait une $G$-repr\'esentation isomorphe \`a $Sp$, ce qui contredit les r\'esultats de Breuil et Barthel-Livn\'e.
\end{itemize}

Le Th\'eor\`eme \ref{ssgqp} d\'ecoule maintenant facilement des r\'esultats prouv\'es pr\'ec\'edemment :
\begin{itemize}
\item le quatri\`eme point est exactement l'\'enonc\'e du Th\'eor\`eme \ref{decomp};
\item la Proposition \ref{ired} et le Corollaire \ref{slt} prouvent le premier point;
\item la Proposition \ref{nonisomgener} et la section \ref{compacmax} d\'emontrent les points 2 et 3.
\end{itemize}

\subsection{Vers une correspondance de Langlands modulo $p$?} 
On note $G_{\QQ_{p}} := Gal(\QQQ_{p}/\QQ_{p})$ le groupe de Galois absolu de $\QQ_{p}$, $\omega_{2}$ le caract\`ere fondamental de Serre de niveau 2, et ind($\omega_{2}^{r+1}$) l'unique repr\'esentation irr\'eductible de $G_{\QQ_{p}}$ de dimension 2 sur $\FFF_{p}$ ayant pour d\'eterminant $\omega_{2}^{r+1}$ et dont la restriction au sous-groupe d'inertie de $G_{\QQ_{p}}$ vaut $\omega_{2}^{r+1} \oplus \omega_{2}^{p(r+1)}$.\\

\noindent Dans le cas des repr\'esentations modulo $p$ de $GL_{2}(\QQ_{p})$, Breuil a d\'emontr\'e \cite[Corollaire 4.2.3]{Br1} l'existence d'une unique bijection entre les classes d'isomorphisme des repr\'esentations supersinguli\`eres de $GL_{2}(\QQ_{p})$ et les classes d'isomorphisme des repr\'esentations irr\'eductibles de dimension 2 sur $\FFF_{p}$ de $G_{\QQ_{p}}$ qui envoie $\pi(r,0,\chi)$ sur ind($\omega_{2}^{r+1}$)$\otimes \chi$ pour tout entier $r \in \{0, \ldots , p-1\}$ et tout caract\`ere $\chi : F^{\times} \to \FFF_{p}^{\times}$. Ceci lui permet de d\'efinir ce qu'il appelle une << correspondance de Langlands locale semi-simple modulo $p$ >> \cite[D\'efinition 4.2.4]{Br1} qui relie les classes d'isomorphisme de repr\'esentations semi-simples de $G_{\QQ_{p}}$ de dimension 2 sur $\FFF_{p}$ \`a certaines classes d'isomorphisme de repr\'esentations lisses semi-simples de $GL_{2}(\QQ_{p})$ sur $\FFF_{p}$.\\

\noindent Regardons maintenant ce qu'il advient lorsque l'on essaye d'\'etablir un tel lien entre repr\'esentations galoisiennes et repr\'esentations supersinguli\`eres de $SL_{2}(\QQ_{p})$. D'apr\`es le Th\'eor\`eme \ref{decomp}, on peut faire correspondre les repr\'esentations galoisiennes ind($\omega_{2}^{r+1}$) \`a des paquets de repr\'esentations supersinguli\`eres de $SL_{2}(\QQ_{p})$ : 
$$\text{ind}(\omega_{2}^{r+1}) \longleftrightarrow \{\pi_{r,\infty} ; \pi_{p-1-r, 0}\} \ . $$

Une premi\`ere diff\'erence avec le cas de $GL_{2}(\QQ_{p})$ est que les repr\'esentations supersinguli\`eres apparaissent par \emph{paquets} de taille 1 (si $r = \frac{p-1}{2}$) ou 2 (sinon). De plus, le Th\'eor\`eme \ref{ssgqp} assure que les paquets correspondant \`a $r$ et \`a $p-1-r$ sont les m\^emes, ce qui se traduit du c\^ot\'e galoisien par le fait que les repr\'esentations projectives induites par ind($\omega_{2}^{r+1}$) et ind($\omega_{2}^{p-1-(r+1)}$) sont isomorphes.\\

Une seconde diff\'erence notable avec le cas de $GL_{2}(\QQ_{p})$ est que l'on perd la possibilit\'e de distinguer la torsion par les caract\`eres du c\^ot\'e galoisien : en effet, si l'on reprend la correspondance obtenue par Breuil et qu'on la restreint \`a $SL_{2}(\QQ_{p})$, on voit que quelque soit le caract\`ere $\chi : F^{\times} \to \FFF_{p}^{\times}$ et quel que soit l'entier $r \in \{ 0, \ldots , p-1\}$, les repr\'esentations galoisiennes ind($\omega_{2}^{r+1}$) et ind($\omega_{2}^{r+1}$)$\otimes \chi$ correspondent au m\^eme paquet $\{\pi_{r,\infty} ;  \pi_{p-1-r, 0}\}$ de repr\'esentations supersinguli\`eres de $SL_{2}(\QQ_{p})$.\\

Par suite, si l'on souhaite d\'efinir une bijection analogue \`a celle obtenue pour $GL_{2}(\QQ_{p})$ et compatible \`a la restriction des repr\'esentations de $GL_{2}(\QQ_{p})$ \`a $SL_{2}(\QQ_{p})$, il faut consid\'erer la fl\`eche\footnote{On note $proj$ l'application naturelle de $GL_{2}(...)$ vers $PGL_{2}(...)$.}
$$proj \circ \text{ind}(\omega_{2}^{r+1}) \longleftrightarrow \{\pi_{r,\infty} ; \pi_{p-1-r, 0}\} $$
qui met en bijection l'ensemble des \emph{paquets} de repr\'esentations supersinguli\`eres de $G_{S}$ exhib\'es ci-dessus d'une part, et d'autre part l'ensemble des repr\'esentations galoisiennes \emph{projectives} de dimension 2 \` a isomorphisme \emph{et \`a torsion par un caract\`ere de $F^{\times}$} pr\`es.


\newpage
\appendix
\section{\'Etude des s\'eries principales << \`a la Barthel-Livn\'e >>}
\label{spBL}
L'objectif de cette annexe est de donner une d\'emonstration du Th\'eor\`eme \ref{sppal} en suivant les id\'ees de la preuve donn\'ee par Barthel-Livn\'e dans le cadre de l'\'etude des repr\'esentations en s\'eries principales de $G$ et en effectuant les modifications techniques n\'ecessaires pour les adapter au cas des repr\'esentations de $G_{S}$.\\

On commence tout d'abord par donner une autre caract\'erisation des \'el\'ements de $Ind_{B_{S}}^{G_{S}}(\eta)$ en partant de la d\'ecomposition de Bruhat :
$$ G_{S} = B_{S} \sqcup B_{S}sU \ \text{ (union disjointe)} \ .$$
Ceci implique que tout \'el\'ement $U$-invariant $f \in Ind_{B_{S}}^{G_{S}}(\eta)$ est enti\`erement d\'etermin\'e par ses valeurs en $I_{2}$ et en $s$.\\ 
On d\'efinit alors une fonctionnelle $j$ sur $Ind_{B_{S}}^{G_{S}}(\eta)$ par la formule suivante :
$$\forall \ x \in F, \  j(f)(x) := f(s u(x))$$
o\`u l'on a pos\'e $u(x) := \left( \begin{array}{cc} 1 & x \\ 0 & 1\end{array}\right) \in U$. En remarquant que l'on a 
\begin{equation}
s u(x) = \left( \begin{array}{cc} x^{-1} & -1 \\ 0 & x \end{array}\right) \left(\begin{array}{cc} 1 & 0 \\ x^{-1}& 1 \end{array}\right) \ , 
\label{dcpcalc}
\end{equation}
on obtient, puisque $f$ est localement constante, que $j(f)$ est localement constante et que, pour $x$ assez grand (i.e. $v_{F}(x)$ tr\`es n\'egative), on a \cite[page 14]{BL2}
$$ j(f)(x) = c_{f} \eta(x^{-1})$$
o\`u $c_{f} := f(I_{2})$. L'application $j$ est donc \`a valeurs dans l'ensemble $\mathcal{J}(\eta)$ des fonctions $\phi : F \to \FFF_{p}$ localement constantes pour lesquelles il existe une constante $c_{\phi}$ telle que $\phi(x) = c_{\phi} \eta(x^{-1})$ pour tout $x$ assez grand, et induit en fait clairement un isomorphisme de $\FFF_{p}$-espaces vectoriels entre $Ind_{B_{S}}^{G_{S}}(\eta)$ et $\mathcal{J}(\eta)$. Ceci permet de munir ce dernier d'une structure de $G_{S}$-module par transport de structure \`a partir de $Ind_{B_{S}}^{G_{S}}(\eta)$. On a de plus les propri\'et\'es suivantes.
\begin{lm}
Soit $\phi$ un \'el\'ement de $\mathcal{J}(\eta)$. Soient $x$, $y \in F$. Alors :
\begin{enumerate}
\item $u(y)\phi(x) = \phi(x+y)$;
\item $\alpha_{0} \phi(x) = \eta(\varpi_{F}^{-1})\phi(\varpi_{F}^{-2}x)$;
\item $\alpha_{0}^{-1} \phi(x) = \eta(\varpi_{F}) \phi(\varpi_{F}^{2}x)$.
\end{enumerate}
\label{lemtechcle}
\end{lm}
\begin{proof}
Ce r\'esultat est l'analogue de \cite[Lemma 17]{BL2} et se prouve par calcul direct. Pour le premier point, on \'ecrit simplement que si $\phi = j(f)$, alors
$$ u(y) \phi(x) := u(y) f(su(x)) = f(su(x)u(y)) = f(su(x+y)) = \phi(x+y)$$
tandis que les deux derniers points sont cons\'equences du calcul plus g\'en\'eral suivant : $\forall \lambda \in F^{\times}$,
$$\begin{array}{rcll} \left( \begin{array}{cc} \lambda & 0 \\ 0 & \lambda^{-1 } \end{array}\right) \phi(x)  & = & f(su(x) \left( \begin{array}{cc} \lambda & 0 \\ 0 & \lambda^{-1 } \end{array}\right)) & \\ 
& = & f( \left( \begin{array}{cc} x^{-1} & -1 \\ 0 & x \end{array}\right) \left( \begin{array}{cc} 1 & 0 \\ x^{-1} & 1 \end{array}\right) \left( \begin{array}{cc} \lambda & 0 \\ 0 & \lambda^{-1 } \end{array}\right))& \text{ par (\ref{dcpcalc})}\\
& = & \eta(x^{-1}) f(\left( \begin{array}{cc} \lambda & 0 \\ x^{-1}\lambda  & \lambda^{-1 } \end{array}\right))& \\
& = & \eta(x^{-1}) f( \left( \begin{array}{cc} 1 & x \\ 0 & 1 \end{array}\right) \left( \begin{array}{cc} 0 & -\lambda^{-1}x \\ \lambda x^{-1} & \lambda^{-1 } \end{array}\right))& \\
& = & \eta(x^{-1}) f(\left( \begin{array}{cc} 0 & -\lambda^{-1}x \\ \lambda x^{-1} & \lambda^{-1 } \end{array}\right))& \\
& = & \eta(x^{-1}) f(\left( \begin{array}{cc} \lambda^{-1}x & 0 \\ 0 & \lambda x^{-1} \end{array}\right) su(\lambda^{-2}x))& \\
& = & \eta(\lambda^{-1})\phi(\lambda^{-2}x) \ .
\end{array}$$

\end{proof}

\noindent Nous allons maintenant devoir distinguer selon que le caract\`ere $\eta$ est ramifi\'e ou non.

\subsection{Le cas non ramifi\'e}
\label{nram}
On suppose tout d'abord que le caract\`ere $\eta$ est non ramifi\'e,  i.e. qu'il est trivial sur $\mathcal{O}_{F}^{\times}$, et l'on pose $ \Lambda := \eta(\varpi_{F}^{-1})$. On note $f_{0} \in Ind_{B_{S}}^{G_{S}}(\eta)$ l'unique \'el\'ement $K_{S}$-invariant valant $1$ en $I_{2}$ et  l'on pose $\phi_{0} := j(f_{0})$. Un calcul direct fournit facilement l'\'identit\'e suivante : 
$$\phi_{0}(x) = \left\{ \begin{array}{ll} 1 & \text{ si } v_{F}(x) \geq 0 \ ; \\ \Lambda^{v_{F}(x)} & \text{ si }v_{F}(x) \leq 0 \ . \end{array} \right.$$
On dispose alors d'un analogue un peu plus pouss\'e de \cite[Proposition 18.1]{BL2} qui n\'ecessite d'introduire la notation suivante : pour tout entier $i \geq 1$, on note $\mathcal{R}_{i} \subset \mathcal{O}_{F}$ un syst\`eme de repr\'esentants de $ \mathcal{O}_{F} / \varpi_{F}^{i}\mathcal{O}_{F}$.
\begin{lm}
On a les deux identit\'es suivantes : 
\begin{enumerate}
\item $\displaystyle \sum_{x \in \mathcal{R}_{1}} n(x/\varpi_{F}) \phi_{0} = (1 - \Lambda^{-1}) \mathbf{1}_{\varpi_{F}^{-1} \mathcal{O}_{F}}$;
\item $\displaystyle \sum_{x \in \mathcal{R}_{2}} n(x/\varpi_{F}^{2}) \phi_{0} = (1 - \Lambda^{-1}) \mathbf{1}_{\varpi_{F}^{-2} \mathcal{O}_{F}}$.
\end{enumerate}
\label{lemtech}
\end{lm}

\begin{proof}
Tout comme dans la preuve de Barthel-Livn\'e, on commence par remarquer que pour tout $y \in F$, on a 
$$ \left\{ \begin{array}{l}\displaystyle \sum_{x \in \mathcal{R}_{1}} n(x/\varpi_{F}) \phi_{0}(y) = \sum_{x \in \mathcal{R}_{1}} \phi_{0}(y + x/\varpi_{F}) \ ; \\
\displaystyle \sum_{x \in \mathcal{R}_{2}} n(x/\varpi_{F}^{2}) \phi_{0}(y) = \sum_{x \in \mathcal{R}_{2}} \phi_{0}(y + x/\varpi_{F}^{2}) \ . \end{array} \right.$$ 
A partir de l\`a, il suffit de nouveau de distinguer selon la valuation $F$-adique de $y $ et de calculer. Pour le premier point, on doit traiter 3 cas distincts :  
\begin{itemize}
\item si $v_{F}(y) < -1$, alors on a $v_{F}(y + x/\varpi_{F}) = v_{F}(y) \leq 0$ pour tout $x \in \mathcal{R}_{1}$, de sorte que l'on a 
$$\begin{array}{rcll}  \displaystyle \sum_{x \in \mathcal{R}_{1}} \phi_{0}(y + x/\varpi_{F}) & = & \displaystyle \sum_{x \in \mathcal{R}_{1}}\Lambda^{v_{F}(y)}& \\
& = & q \Lambda^{v_{F}(y)} & \\
& = & 0 \\
& = & (1 - \Lambda^{-1}) \mathbf{1}_{\varpi_{F}^{-1} \mathcal{O}_{F}}(y);
\end{array}$$
\item si $v_{F}(y) > -1$, alors on a $v_{F}(y) \geq 0$ donc $v_{F}(y + x/\varpi_{F}) = v_{F}(x/\varpi_{F}) = -1$ si $x \not=0$, de sorte que l'on a cette fois
$$\begin{array}{rcll}  \displaystyle \sum_{x \in \mathcal{R}_{1}} \phi_{0}(y + x/\varpi_{F}) & = & \phi_{0}(y)+ \displaystyle \sum_{\stackrel{x \in \mathcal{R}_{1}}{x \not=0}} \Lambda^{-1}& \\
& = & 1 + (q-1) \Lambda^{-1} & \\
& = & 1 - \Lambda^{-1} \\
& = & (1 - \Lambda^{-1}) \mathbf{1}_{\varpi_{F}^{-1} \mathcal{O}_{F}}(y);
\end{array}$$
\item si $v_{F}(y) = -1$, alors il existe un unique $x_{0} \in \mathcal{R}_{1}$ tel que $x_{0} + \varpi_{F}y \equiv 0 \ [\varpi_{F}]$. On a alors :
$$\begin{array}{rcll}  \displaystyle \sum_{x \in \mathcal{R}_{1}} \phi_{0}(y + x/\varpi_{F}) & = & \phi_{0}(y + x_{0}/\varpi_{F})+ \displaystyle \sum_{\stackrel{ x \in \mathcal{R}_{1}}{x \not=x_{0}}} \Lambda^{-1}& \\
& = & 1 + (q-1) \Lambda^{-1} & \\
& = & 1 - \Lambda^{-1} \\
& = & (1 - \Lambda^{-1}) \mathbf{1}_{\varpi_{F}^{-1} \mathcal{O}_{F}}(y)
\end{array}$$
ce qui ach\`eve de prouver la premi\`ere identit\'e.\\
\end{itemize}

Pour le second point, on effectue un raisonnement similaire, en distinguant un cas suppl\'ementaire :
\begin{itemize}
\item si $v_{F}(y) < -2$, alors on a $v_{F}(y + x/\varpi_{F}^{2}) = v_{F}(y) \leq 0$ pour tout $x \in \mathcal{R}_{2}$, de sorte que l'on a 
$$\begin{array}{rcll}  \displaystyle \sum_{x \in \mathcal{R}_{2}} \phi_{0}(y + x/\varpi_{F}^{2}) & = & \displaystyle \sum_{x \in \mathcal{R}_{2}} \Lambda^{v_{F}(y)}& \\
& = & q^{2} \Lambda^{v_{F}(y)} & \\
& = & 0 \\
& = & (1 - \Lambda^{-1}) \mathbf{1}_{\varpi_{F}^{-2} \mathcal{O}_{F}}(y) \ ;
\end{array}$$

\item si $v_{F}(y) > -1$, il faut faire un peu plus attention. On a toujours $v_{F}(y) \geq 0$, de sorte que  
$$ v_{F}(y + x/\varpi_{F}^{2}) = \left\{ \begin{array}{ll} -2 & \text{ si } x \not\equiv 0 \ [\varpi_{F}] \ ; \\
-1 & \text{ si } x \equiv 0 \ [\varpi_{F}] \text{ mais } x \not=0  \ ; \\
v_{F}(y) & \text{ si } x = 0 \ .
\end{array} \right.$$

\noindent Par suite, on a donc cette fois 
$$\begin{array}{rcll}  \displaystyle \sum_{x \in \mathcal{R}_{2}} \phi_{0}(y + x/\varpi_{F}) & = & \phi_{0}(y)+ \displaystyle \sum_{\stackrel{x \in \mathcal{R}_{2}}{x \not=0 \text{ et } x \equiv 0 [\varpi_{F}]}} \Lambda^{-1} +  \sum_{\stackrel{x \in \mathcal{R}_{2}}{ x \not\equiv 0 [\varpi_{F}]}} \Lambda^{-2}& \\
& = & 1 + (q-1) \Lambda^{-1} + (q^{2} - q) \Lambda^{-2}& \\
& = & 1 - \Lambda^{-1} \\
& = & (1 - \Lambda^{-1}) \mathbf{1}_{\varpi_{F}^{-2} \mathcal{O}_{F}}(y) \ .
\end{array}$$

\item Si $v_{F}(y) = -2$, alors on \'ecrit $y = \varpi_{F}^{-2}y_{0}$ avec $v_{F}(y_{0}) = 0$, de sorte que l'on a 
$$\begin{array}{rcl}  \displaystyle \sum_{x \in \mathcal{R}_{2}} \phi_{0}(y + x/\varpi_{F}^{2}) & = & \displaystyle \sum_{x \in \mathcal{R}_{2}} \phi_{0}((y_{0} + x)/\varpi_{F}^{2}) \\
& = & 1 + (q-1) \Lambda^{-1} + (q^{2} - q) \Lambda^{-2} \\
& = & 1 - \Lambda^{-1} \\
& = & (1 - \Lambda^{-1}) \mathbf{1}_{\varpi_{F}^{-2} \mathcal{O}_{F}}(y) \\ 
\end{array}$$
o\`u le passage de la premi\`ere ligne \`a la seconde ligne s'effectue par le m\^eme calcul que pour $v_{F}(y) > -1$.\\

\item Si $v_{F}(y) = -1$, on \'ecrit cette fois $y = \varpi_{F}^{-1}y_{0}$ avec $v_{F}(y_{0}) = 0$, de sorte que l'on a comme pr\'ec\'edemment 
$$\begin{array}{rcll}  \displaystyle \sum_{x \in \mathcal{R}_{2}} \phi_{0}(y + x/\varpi_{F}^{2}) & = & \displaystyle \sum_{x \in \mathcal{R}_{2}} \phi_{0}((y_{0} + x\varpi_{F})/\varpi_{F}^{2})& \\
& = & 1 + (q-1) \Lambda^{-1} + (q^{2} - q) \Lambda^{-2} & \\
& = & 1 - \Lambda^{-1} \\
& = & (1 - \Lambda^{-1}) \mathbf{1}_{\varpi_{F}^{-2} \mathcal{O}_{F}}(y)
\end{array}$$
ce qui ach\`eve de prouver la seconde assertion.
\end{itemize}
\end{proof}

\noindent Ceci nous permet d'obtenir \`a l'aide du Lemme \ref{lemtechcle} un corollaire qui sera important dans la suite.
\begin{cor}
Si $\eta$ est un caract\`ere non ramifi\'e non trivial, alors $f_{0}$ engendre le $G_{S}$-module $Ind_{B_{S}}^{G_{S}}(\eta)$.
\label{genphi}
\end{cor}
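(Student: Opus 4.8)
The plan is to prove that the $G_S$-submodule $M := \langle G_S \cdot f_0 \rangle$ generated by $f_0$ is the whole of $Ind_{B_S}^{G_S}(\eta)$, working throughout inside the isomorphic model $\mathcal{J}(\eta)$ provided by the functional $j$, that is, with $\phi_0 = j(f_0)$ in place of $f_0$ and the transported $G_S$-action on $\mathcal{J}(\eta)$.

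First I would observe that, since $\eta$ is non-ramified and non-trivial, one has $\Lambda = \eta(\varpi_F^{-1}) \neq 1$, hence the scalar $1 - \Lambda^{-1}$ occurring in Lemme \ref{lemtech} is non-zero. Dividing the two identities of Lemme \ref{lemtech} by this scalar, and using that $M$ is stable under $G_S$ and contains every $\FFF_p$-linear combination of $G_S$-translates of $\phi_0$, it follows that $M$ contains both $\mathbf{1}_{\varpi_F^{-1}\mathcal{O}_F}$ and $\mathbf{1}_{\varpi_F^{-2}\mathcal{O}_F}$.

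Next I would propagate these two characteristic functions through $M$ by means of the explicit action formulas of Lemme \ref{lemtechcle}: translation by $u(y)$ (for $y \in F$) sends $\mathbf{1}_{S}$ to $\mathbf{1}_{y+S}$, while $\alpha_0$ and $\alpha_0^{-1}$ rescale $S$ by $\varpi_F^{2}$, resp. $\varpi_F^{-2}$, up to the non-zero scalars $\eta(\varpi_F^{-1})$, resp. $\eta(\varpi_F)$. Starting from the two seed functions above and iterating $\alpha_0^{\pm k}$, then translating, one obtains $\mathbf{1}_{a + \varpi_F^{m}\mathcal{O}_F}$ for every $a \in F$ and every $m \in \ZZ$; in particular one gets characteristic functions of arbitrarily small balls. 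Since every compactly supported locally constant function $F \to \FFF_p$ is a finite $\FFF_p$-linear combination of such characteristic functions, $M$ contains the full subspace $\mathcal{S}_c(F) \subseteq \mathcal{J}(\eta)$ of compactly supported locally constant functions.

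Finally I would treat an arbitrary $\phi \in \mathcal{J}(\eta)$, with $\phi(x) = c_\phi\,\eta(x^{-1})$ for $v_F(x)$ small enough. Because $\eta$ is non-ramified one has $\eta(x^{-1}) = \Lambda^{v_F(x)}$, so the explicit formula for $\phi_0$ gives $\phi_0(x) = \eta(x^{-1})$ for all $x$ with $v_F(x) \leq 0$; hence $\phi - c_\phi\phi_0$ vanishes on $\{v_F(x) \leq -N\}$ for $N$ large, so it is compactly supported and locally constant, i.e. lies in $\mathcal{S}_c(F) \subseteq M$. As $c_\phi\phi_0 \in M$ as well, we get $\phi \in M$, whence $M = \mathcal{J}(\eta)$ and $f_0$ generates $Ind_{B_S}^{G_S}(\eta)$. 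The only mildly delicate point is the radius bookkeeping in the third step — checking that iterating $\alpha_0^{\pm k}$ on the two seeds really produces balls of all (or at least arbitrarily small) radii; everything else is formal. Note that the non-triviality of $\eta$ is used in an essential way, precisely to ensure $1 - \Lambda^{-1} \neq 0$: for trivial $\eta$ the seed functions degenerate and the statement genuinely fails, consistently with $Ind_{B_S}^{G_S}(\mathbf{1})$ being reducible.
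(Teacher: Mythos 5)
Your proposal is correct and follows essentially the same route as the paper: the paper's one-line proof simply invokes Lemmes \ref{lemtechcle} and \ref{lemtech} (with $\Lambda \neq 1$) to conclude that $\phi_{0}$ generates $\mathcal{J}(\eta)$, and your argument is precisely the intended expansion of that line --- extracting the two seed functions $\mathbf{1}_{\varpi_{F}^{-1}\mathcal{O}_{F}}$ and $\mathbf{1}_{\varpi_{F}^{-2}\mathcal{O}_{F}}$ (one for each parity, which is exactly why the paper needs two identities, since $\alpha_{0}$ only dilates by $\varpi_{F}^{\pm 2}$), generating all characteristic functions of balls by translations and dilations, and handling the tail at infinity via $\phi - c_{\phi}\phi_{0}$. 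Your bookkeeping on radii and the final decomposition are correct, so there is nothing to fix.
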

\begin{proof}
 Dire que $\eta$ est non trivial \'equivaut \`a dire que l'on a $\Lambda \not= 1$ (puisque $\eta$ est suppos\'e non ramifi\'e). Dans ce cas, les Lemmes \ref{lemtechcle} et \ref{lemtech} assurent que l'\'el\'ement $\phi_{0}$ engendre le $G_{S}$-module $\mathcal{J}(\eta)$, ce qui prouve notre r\'esultat \'etant donn\'e que l'on a $f_{0} = j^{-1}(\phi_{0})$.
\end{proof}
\begin{NB}
Rappelons que, par la d\'ecomposition d'Iwasawa $G_{S} = UT_{S}K_{S}$, l'espace des $K_{S}$-invariants de $Ind_{B_{S}}^{G_{S}}(\eta)$ est de dimension 1 sur $\FFF_{p}$, et que $\{f_{0}\}$ en est de fait une base.
\end{NB}
Avant de prouver le Th\'eor\`eme \ref{sppal} dans le cas non ramifi\'e, on rappelle que l'on dispose d'une autre d\'ecomposition de $G_{S}$ \cite[Section 3.3]{BL2}:
$$ G_{S} = B_{S}I_{S} \sqcup B_{S}s I_{S}$$
qui peut \^etre r\'e\'ecrite $G_{S} = B_{S}I_{S} \sqcup B_{S} \beta_{0} I_{S}$. Ceci nous permet de prouver facilement que $(Ind_{B_{S}}^{G_{S}}(\eta))^{I_{S}}$ est de dimension 2 sur $\FFF_{p}$ et d'en donner une base explicite, \`a savoir la famille $\{f_{1}, f_{2}\}$ d\'efinie par 
$$ \left\{ \begin{array}{ccc} f_{1}(I_{2}) = 1 & ; & f_{1}(\beta_{0}) = 0 \ ; \\ f_{2}(I_{2}) = 0 & ; & f_{2}(\beta_{0}) = 1 \ . \end{array}\right.$$
La fonction $f_{0}$ \'etant $K_{S}$-invariante, elle est en particulier $I_{S}$-invariante donc peut se d\'ecomposer dans la base $\{f_{1},f_{2}\}$. Par d\'efinition, on a $f_{0}(I_{2}) = 1$ et, puisque $\beta_{0} = - \beta_{0}^{-1} = \alpha_{0}^{-1} s$, on obtient facilement que $f_{0}(\beta_{0}) = \eta(\varpi_{F}^{-1}) = \Lambda$. On en conclut donc que l'on a 
\begin{equation}
f_{0} = f_{1} + \Lambda f_{2}
\label{dcpf}
\end{equation}
Remarquons aussi que l'on a tout aussi facilement les identit\'es suivantes : 
\begin{equation}
\left\{ \begin{array}{ccc} \alpha_{0}f_{1} = \Lambda^{-1}f_{1} & ; & \beta_{0}f_{1} = f_{2} \ ; \\ \alpha_{0}f_{2} = \Lambda f_{2} & ; & \beta_{0} f_{2} = f_{1} \ . \\  \end{array}\right.
\label{actionI}
\end{equation}

\vspace{\baselineskip}

\noindent \textbf{Preuve du Th\'eor\`eme \ref{sppal} dans le cas non ramifi\'e :} Soit $V$ une sous-repr\'esentation non nulle de $Ind_{B_{S}}^{G_{S}}(\eta)$. D'apr\`es la Proposition \ref{faitcrucial}, l'espace $V^{I_{S}(1)}$ est non nul. Puisque $\eta$ est non ramifi\'e, le sous-groupe d'Iwahori agit trivialement sur $(Ind_{B_{S}}^{G_{S}}(\eta))^{I_{S}(1)}$, donc en particulier sur $V^{I_{S}(1)}$, de sorte que l'on a aussi $V^{I_{S}} \not= \{0\}$.\\
Soit alors $f$ un \'el\'ement non nul de $V^{I_{S}}$ : il est contenu dans $(Ind_{B_{S}}^{G_{S}}(\eta))^{I_{S}}$ donc il s'\'ecrit de mani\`ere unique sous la forme $f = a_{1}f_{1} + a_{2}f_{2}$ avec $a_{1}$, $a_{2} \in \FFF_{p}$ non simultan\'ement nuls. Si $a_{1} = 0$ (respectivement : $a_{2} = 0$), alors $f$ est colin\'eaire \`a $f_{1}$ (resp. $f_{2}$) avec coefficient de proportionnalit\'e non nul, de sorte que $f_{1}$ (resp. $f_{2}$) est contenu dans $V$. D'apr\`es (\ref{actionI}), on obtient qu'alors $f_{2}$ (resp. $f_{1}$) est contenu dans $V$, donc que $f_{0} = f_{1} + \Lambda f_{2}$ est contenu dans $V$, et donc que $V = Ind_{B_{S}}^{G_{S}}(\eta)$ d'apr\`es le Corollaire \ref{genphi}.\\
Supposons maintenant que $a_{1}a_{2} \not=0$ et m\^eme, quitte \`a renormaliser $f$, que $a_{1} = 1$. On a alors 
\begin{equation}
 \left\{ \begin{array}{l} \alpha_{0}f = \Lambda^{-1} f_{1} + a_{2} \Lambda f_{2} \ , \\ 
 \beta_{0}f = f_{2} + a_{2}f_{1} \ ,
 \end{array} \right.
 \label{actionsurf}
 \end{equation}
de sorte que $h := \Lambda\alpha_{0}f - f = (\Lambda^{2} - 1)a_{2}f_{2}$ est encore un \'el\'ement de $V$. Par suite, si l'on a $\Lambda^{2} \not=1$, on obtient que $f_{2}$ est contenu dans $V$, ce qui nous permet de conclure comme pr\'ec\'edemment.\\
Supposons que $\Lambda^{2} = 1$. Comme $\eta$ est suppos\'e non trivial et non ramifi\'e, on a n\'ecessairement $\Lambda \not=1$, donc on peut supposer que $p \not= 2$ et $\Lambda = -1$, ce qui revient \`a dire que $\eta$ est l'unique caract\`ere quadratique non ramifi\'e de $F^{\times}$. On a alors $f_{0} = f_{1} - f_{2}$, et le syst\`eme (\ref{actionsurf}) devient alors 
$$ \left\{ \begin{array}{l} \alpha_{0}f = - f_{1} - a_{2} f_{2} = -f \ ; \\ 
 \beta_{0}f = f_{2} + a_{2}f_{1} \ .
 \end{array} \right. $$

\noindent Trois cas se pr\'esentent :
\begin{itemize}
\item ou bien $a_{2}^{2} \not= 1$, auquel cas $\beta_{0} f - a_{2}f = (1- a_{2}^{2})f_{2}$ est contenu dans $V$ avec $1 - a_{2}^{2} \not= 1$, donc $f_{2} \in V$ et on peut conclure comme ci-dessus.
\item Ou bien $a_{2} = -1$, auquel cas $f = f_{1} - f_{2} = f_{0}$ est contenu dans $V$, ce qui implique directement que $V = Ind_{B_{S}}^{G_{S}}(\eta)$ d'apr\`es le Corollaire \ref{genphi}.
\item Ou bien $a_{2} = 1$, auquel cas on a  $f = f_{1} + f_{2} = \beta_{0}f$. Un calcul direct \`a partir de (\ref{actionI}) et de l'identit\'e $s = \alpha_{0}\beta_{0}$ montre alors que $s \cdot f_{1} = f_{2}$  et $s \cdot f_{2} = -f_{1}$, donc que $s \cdot f = f_{2} - f_{1} = - f_{0}$. Ceci assure que $f_{0}$ est contenu dans la repr\'esentation de $G_{S}$ engendr\'ee par $f$, elle-m\^eme contenue dans $V$, donc que $f_{0}$ est contenu dans $V$, ce qui permet de terminer la d\'emonstration. \\
\end{itemize}

\subsection{Le cas ramifi\'e}
\label{ram}
Supposons maintenant que $\eta$ soit un caract\`ere ramifi\'e. Nous perdons l'existence de la fonction $f_{0}$, mais nous disposons encore de fonctions analogues aux fonctions $f_{1}$ et $f_{2}$ d\'efinies dans la section pr\'ec\'edente. Plus pr\'ecis\'ement, la d\'ecomposition $G_{S} = B_{S}I_{S}(1) \sqcup B_{S}\beta_{0}I_{S}(1)$ assure que l'espace $(Ind_{B_{S}}^{G_{S}}(\eta))^{I_{S}(1)}$ des vecteurs $I_{S}(1)$-invariants est encore de dimension 2 sur $\FFF_{p}$ (cf. aussi \cite[Lemma 28]{BL1}) et qu'une base en est donn\'ee par la famille de fonctions $\{\ell_{1}  , \ell_{2}\}$ caract\'eris\'ees par  :
\begin{equation}
\left\{ \begin{array}{cccl} \ell_{1}(I_{2}) = 1 & ; & \ell_{1}(\beta_{0}) = 0 & ; \\ \ell_{2}(I_{2}) = 0 & ; & \ell_{2}(\beta_{0}) = 1 & .  \end{array}\right.
\label{ellcond}
\end{equation}
Elles satisfont les m\^emes relations que leurs analogues du cas non ramifi\'e : 
\begin{equation}
\left\{ \begin{array}{cccl} \alpha_{0}\ell_{1} = \Lambda^{-1}\ell_{1} & ; & \beta_{0}\ell_{1} = \ell_{2} & ; \\ \alpha_{0}\ell_{2} = \Lambda \ell_{2} & ; & \beta_{0} \ell_{2} = \ell_{1} & . \\  \end{array}\right.
\label{actionell1}
\end{equation}
Le r\'esultat suivant, qui fait de nouveau appel \`a l'isomorphisme $j$, nous permet alors de reprendre les arguments du cas non ramifi\'e pour prouver le Th\'eor\`eme \ref{sppal} dans le cas ramifi\'e.
\begin{lm} Soit $x \in F$. Alors : 
\begin{enumerate}
\item $j(\ell_{1})(x) =  \left\{ \begin{array}{ll}  \eta(x^{-1})  & \text{ si }v_{F}(x) < 0 \ ; \\ 0  & \text{ si } v_{F}(x) \geq 0 \ . \end{array}\right.$ 
\item $j(\ell_{2})(x) = \mathbf{1}_{\mathcal{O}_{F}}(x)$ .
\end{enumerate}
\label{techramphi}
\end{lm}

\begin{proof}
Pour toute fonction $f \in Ind_{B_{S}}^{G_{S}}(\eta)$, on a par d\'efinition 
$$ j(f)(x) = f(su(x)) = f(\left( \begin{array}{cc} 0 & -1 \\ 1 & x \end{array}\right)) = \eta(x^{-1})f(\left( \begin{array}{cc} 1 & 0 \\ x^{-1} & 1\end{array}\right)) \ .$$
Ceci donne la formule voulue pour $j(\ell_{1})(x)$ lorsque $v_{F}(x) <0$ puisque dans ce cas, $\left( \begin{array}{cc} 1 & 0 \\ x^{-1} & 1\end{array}\right)$ est un \'el\'ement de $I_{S}(1)$ qui fixe donc $\ell_{1}$. Dans les autres cas, on utilise tout simplement le fait que 
$$su(x) = (\alpha_{0}\beta_{0})u(x) $$
avec $\alpha_{0} \in B_{S}$ et $u(x) \in I_{S}(1)$ si $v_{F}(x) \geq 0$, ce qui prouve les r\'esultats restants gr\^ace \`a (\ref{ellcond}).
\end{proof}

\begin{lm} On a l'identit\'e suivante dans $\mathcal{J}(\eta)$ :
$$ \displaystyle \sum_{x \in \mathcal{R}_{1}} \left( \begin{array}{cc} \varpi_{F} & x \\ 0 & \varpi_{F}^{-1}\end{array}\right) \mathbf{1}_{\mathcal{O}_{F}} = \mathbf{1}_{\varpi_{F}\mathcal{O}_{F}} \ .$$
\label{cleramphi}
\end{lm}
\begin{proof}
Il suffit de fixer $y \in F$ et de faire un calcul direct comme dans le cas non ramifi\'e en faisant attention \`a la valeur de $v_{F}(y)$ : 
$$\begin{array}{rcl} \displaystyle \sum_{x \in \mathcal{R}_{1}} \left( \begin{array}{cc} \varpi_{F} & x \\ 0 & \varpi_{F}^{-1}\end{array}\right) \mathbf{1}_{\mathcal{O}_{F}}(y) & = & \displaystyle \sum_{x \in \mathcal{R}_{1}} \left( \begin{array}{cc} \varpi_{F} & x \\ 0 & \varpi_{F}^{-1}\end{array}\right) j(\ell_{2})(y)  \\
& := & \displaystyle \sum_{x \in \mathcal{R}_{1}} \ell_{2}(su(y)\left( \begin{array}{cc} \varpi_{F} & x \\ 0 & \varpi_{F}^{-1}\end{array}\right) ) \\
& = & \displaystyle \sum_{x \in \mathcal{R}_{1}}   \ell_{2}(\beta_{0}\left( \begin{array}{cc} 1 & \varpi_{F}^{-1}x + \varpi_{F}^{-2}y \\ 0 & 1\end{array}\right) )\\
& = &  \displaystyle \sum_{x \in \mathcal{R}_{1}} \mathbf{1}_{\mathcal{O}_{F}}(\varpi_{F}^{-1}x + \varpi_{F}^{-2}y)\\
& = &  \displaystyle \sum_{x \in \mathcal{R}_{1}} \mathbf{1}_{\varpi_{F}^{2}\mathcal{O}_{F}}(\varpi_{F}x + y) \ .\\
\end{array}$$
Notons que la quatri\`eme \'egalit\'e provient du Lemme \ref{lemtechcle}. On a donc de nouveau trois cas \`a distinguer :
\begin{itemize}
\item Si $v_{F}(y) \leq 0$, alors $v_{F}(\varpi_{F} x + y) = v_{F}(y) \leq 0$ pour tout $x \in \mathcal{R}_{1}$ donc la somme est nulle, tout comme l'est $\mathbf{1}_{\varpi_{F}\mathcal{O}_{F}}(y)$.
\item Si $v_{F}(y) \geq 2$, alors $v_{F}(\varpi_{F} x + y) = 1$ si $x$ est non nul et $v_{F}(\varpi_{F} x + y) = v_{F}(y)$ si $x$ est nul, donc la somme vaut $1 = \mathbf{1}_{\varpi_{F}\mathcal{O}_{F}}(y)$.
\item Si $v_{F}(y) = 1$, alors il existe un unique $x_{0} \in \mathcal{R}_{1}$ tel que $y + \varpi_{F}x_{0} \equiv 0 \ [\varpi_{F}^{2}]$. On a alors $v_{F}(\varpi_{F} x + y)= 2$ si $x = x_{0}$ et $v_{F}(\varpi_{F} x + y) = 1$ si $x \not= x_{0}$, de sorte que la somme est de nouveau \'egale \`a $1 = \mathbf{1}_{\varpi_{F}\mathcal{O}_{F}}(y)$.
\end{itemize}
\end{proof}

\begin{cor}
La famille $\{\ell_{1}, \ell_{2}\}$ engendre la $G_{S}$-repr\'esentation $Ind_{B_{S}}^{G_{S}}(\eta)$.
\label{generam}
\end{cor}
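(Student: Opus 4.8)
Le plan est de transporter la question dans $\mathcal{J}(\eta)$ gr\^ace \`a l'isomorphisme $j$ : il suffit de v\'erifier que les fonctions $j(\ell_1)$ et $j(\ell_2)$ engendrent $\mathcal{J}(\eta)$ comme $G_S$-module. Le Lemme \ref{techramphi} donne $j(\ell_2) = \mathbf{1}_{\mathcal{O}_F}$ et identifie $j(\ell_1)$ \`a la fonction valant $\eta(x^{-1})$ pour $v_F(x) < 0$ et $0$ pour $v_F(x) \geq 0$ ; en particulier $j(\ell_1)$ est un \'el\'ement de $\mathcal{J}(\eta)$ dont la constante associ\'ee vaut $1$. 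On en d\'eduit que l'application $\mathcal{J}(\eta) \to \FFF_p$, $\phi \mapsto c_\phi$, est une forme lin\'eaire surjective dont le noyau est l'espace $C_c^\infty(F,\FFF_p)$ des fonctions localement constantes \`a support compact $F \to \FFF_p$ (une fonction localement constante de constante nulle s'annule pour $v_F(x)$ assez n\'egative, donc est support\'ee dans un $\varpi_F^{-N}\mathcal{O}_F$ pour $N$ assez grand), et que $\mathcal{J}(\eta) = \FFF_p\, j(\ell_1) \oplus C_c^\infty(F,\FFF_p)$.

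Il ne reste donc qu'\`a montrer que $C_c^\infty(F,\FFF_p)$ est contenu dans le $G_S$-module engendr\'e par $\ell_2$, puisque $j(\ell_1)$ y est d\'ej\`a. Comme cet espace est engendr\'e sur $\FFF_p$ par les indicatrices de boules $\mathbf{1}_{a + \varpi_F^n\mathcal{O}_F}$ ($a \in F$, $n \in \ZZ$), il suffit de les y placer toutes. Les points (2) et (3) du Lemme \ref{lemtechcle}, appliqu\'es \`a $\mathbf{1}_{\mathcal{O}_F} = j(\ell_2)$, fournissent (\`a scalaire non nul pr\`es, car $\eta(\varpi_F^{\pm 1}) \in \FFF_p^\times$) les fonctions $\mathbf{1}_{\varpi_F^{2k}\mathcal{O}_F}$ en it\'erant $\alpha_0$ et $\alpha_0^{-1}$ ; le Lemme \ref{cleramphi}, dont les matrices sont de d\'eterminant $1$ donc dans $G_S$, exhibe $\mathbf{1}_{\varpi_F\mathcal{O}_F}$ comme combinaison lin\'eaire de $G_S$-translat\'es de $\ell_2$, d'o\`u par it\'eration de $\alpha_0^{\pm 1}$ les $\mathbf{1}_{\varpi_F^{2k+1}\mathcal{O}_F}$ ; on dispose ainsi de tous les $\mathbf{1}_{\varpi_F^n\mathcal{O}_F}$, $n \in \ZZ$. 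Le point (1) du Lemme \ref{lemtechcle} donne enfin $u(y)\cdot \mathbf{1}_{\varpi_F^n\mathcal{O}_F} = \mathbf{1}_{-y + \varpi_F^n\mathcal{O}_F}$, et toutes les indicatrices de boules sont atteintes.

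Ceci termine la preuve : le $G_S$-module engendr\'e par $\{\ell_1, \ell_2\}$ contient $j(\ell_1)$ et tout $C_c^\infty(F,\FFF_p)$, donc vaut $\mathcal{J}(\eta)$, et en appliquant $j^{-1}$ on obtient que $\{\ell_1, \ell_2\}$ engendre $Ind_{B_S}^{G_S}(\eta)$. Notons que, contrairement au cas non ramifi\'e, on ne peut pas se passer de $\ell_1$ : la Proposition \ref{caracgs} entra\^ine que $G_S$ agit trivialement sur la droite $\mathcal{J}(\eta)/C_c^\infty(F,\FFF_p)$, de sorte que le $G_S$-module engendr\'e par $\ell_2$ seul reste contenu dans $C_c^\infty(F,\FFF_p)$. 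L'unique difficult\'e r\'eelle est la barri\`ere de parit\'e — $\alpha_0$ ne d\'ecale la valuation que de $2$ —, franchie pr\'ecis\'ement par le Lemme \ref{cleramphi}, analogue ramifi\'e du Lemme \ref{lemtech} ; le reste se ram\`ene aux v\'erifications \'el\'ementaires que les boules engendrent $C_c^\infty(F,\FFF_p)$ et que la d\'ecomposition $\mathcal{J}(\eta) = \FFF_p\, j(\ell_1) \oplus C_c^\infty(F,\FFF_p)$ est valable.
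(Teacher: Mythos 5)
Votre preuve du Corollaire \ref{generam} est correcte et suit essentiellement la m\^eme voie que celle du texte, qui r\'e\'ecrit de m\^eme $j(\ell_{1}) = \eta^{-1}(\mathbf{1} - \mathbf{1}_{\mathcal{O}_{F}})$ et invoque les Lemmes \ref{lemtechcle}, \ref{techramphi} et \ref{cleramphi} pour engendrer $\mathcal{J}(\eta)$ : vous ne faites qu'expliciter ce que le texte laisse au lecteur, \`a savoir la d\'ecomposition vectorielle $\mathcal{J}(\eta) = \FFF_{p}\, j(\ell_{1}) \oplus C^{\infty}_{c}(F,\FFF_{p})$ et l'obtention de toutes les indicatrices $\mathbf{1}_{a+\varpi_{F}^{n}\mathcal{O}_{F}}$ par translations (point (1) du Lemme \ref{lemtechcle}), dilatations paires (points (2) et (3)) et le Lemme \ref{cleramphi} pour franchir la barri\`ere de parit\'e. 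Ces v\'erifications sont exactes.

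En revanche, votre remarque finale est fausse : le sous-espace $C^{\infty}_{c}(F,\FFF_{p})$ n'est pas stable sous $G_{S}$ --- via $j$, il correspond au noyau de l'\'evaluation en $I_{2}$, qui n'est stable que sous $B_{S}$ (avec pour quotient le caract\`ere $\eta$ de $B_{S}$, et non un caract\`ere de $G_{S}$) --- de sorte que $G_{S}$ n'agit pas sur la droite $\mathcal{J}(\eta)/C^{\infty}_{c}(F,\FFF_{p})$ et que la Proposition \ref{caracgs} ne s'y applique pas. Concr\`etement, la constante associ\'ee \`a $s\cdot j(\ell_{2})$ vaut $(s\cdot\ell_{2})(I_{2}) = \ell_{2}(s) = \ell_{2}(\alpha_{0}\beta_{0}) = \eta(\varpi_{F}) \not= 0$, donc $s\cdot j(\ell_{2}) \notin C^{\infty}_{c}(F,\FFF_{p})$ : le $G_{S}$-module engendr\'e par $\ell_{2}$ seul n'est pas contenu dans $C^{\infty}_{c}(F,\FFF_{p})$; d'ailleurs, une fois le Th\'eor\`eme \ref{sppal} acquis, $Ind_{B_{S}}^{G_{S}}(\eta)$ est irr\'eductible et $\ell_{2}$ seul l'engendre. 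Cette remarque erron\'ee n'est toutefois pas utilis\'ee dans votre d\'emonstration du corollaire, qui reste valable telle quelle.
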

\begin{proof} Remarquons que l'on peut r\'e\'ecrire $j(\ell_{1})$ sous la forme 
$$ j(\ell_{1}) = \eta^{-1} (\mathbf{1} - \mathbf{1}_{\mathcal{O}_{F}}) \ .$$
Par suite, les Lemmes \ref{lemtechcle}, \ref{techramphi} et \ref{cleramphi} assurent que $\{j(\ell_{1}), j(\ell_{2})\}$ engendre le $G_{S}$-module $\mathcal{J}(\eta)$, ce qui \'equivaut \`a dire que $\{\ell_{1},\ell_{2}\}$ engendre le $G_{S}$-module $Ind_{B_{S}}^{G_{S}}(\eta)$.
\end{proof}

\noindent \textbf{Preuve du Th\'eor\`eme \ref{sppal} dans le cas ramifi\'e :}
Soit $V$ une sous-repr\'esentation non nulle de $Ind_{B_{S}}^{G_{S}}(\eta)$. D'apr\`es la Proposition \ref{faitcrucial}, l'espace $V^{I_{S}(1)}$ est non nul. Soit alors $f$ un \'el\'ement non nul de $V^{I_{S}(1)}$ : il est contenu dans $(Ind_{B_{S}}^{G_{S}}(\eta))^{I_{S}(1)}$ donc peut s'\'ecrire de mani\`ere unique sous la forme $f = a_{1}\ell_{1} + a_{2}\ell_{2}$ avec $a_{1}$, $a_{2} \in \FFF_{p}$ non simultan\'ement nuls. Si $a_{1} = 0$ (respectivement : $a_{2} = 0$), alors $f$ est colin\'eaire \`a $\ell_{1}$ (resp. $\ell_{2}$) avec coefficient de proportionnalit\'e non nul, de sorte que $\ell_{1}$ (resp. $\ell_{2}$) est contenu dans $V$. D'apr\`es (\ref{actionell1}), on obtient qu'alors $\ell_{2}$ (resp. $\ell_{1}$) est contenu dans $V$ et donc que $V = Ind_{B_{S}}^{G_{S}}(\eta)$ d'apr\`es le Corollaire \ref{generam}.\\
Supposons maintenant que $a_{1}a_{2} \not=0$ et m\^eme, quitte \`a renormaliser $f$, que $a_{1} = 1$. On a alors, pour tout $t \in \mathcal{O}_{F}^{\times}$,
\begin{equation}
\left( \begin{array}{cc} t & 0 \\ 0 & t^{-1} \end{array}\right) f =  \eta(t)a_{1}\ell_{1} + \eta(t^{-1}) a_{2}\ell_{2} \ ,
 \label{actionsurf}
 \end{equation}
ce qui implique que 
 $$\left( \begin{array}{cc} t & 0 \\ 0 & t^{-1} \end{array}\right) f - \eta(t) f = (\eta(t^{-1})-\eta(t))a_{2}\ell_{2} \ . $$
Par suite, s'il existe $t \in \mathcal{O}_{F}$ tel que $\eta(t^{2}) \not=1$, on obtient (pour cette valeur de $t$) que $\ell_{2}$ est contenu dans $V$, donc de nouveau que $V = Ind_{B_{S}}^{G_{S}}(\eta)$ gr\^ace au Corollaire \ref{generam}.\\
Si $\eta^{2}(t) = 1$ pour tout $t \in \mathcal{O}_{F}^{\times}$, on peut alors reprendre les arguments du cas non ramifi\'e pour conclure puisque l'on a toujours
\begin{equation}
 \left\{ \begin{array}{l} s \cdot (\ell_{1} + a_{2} \ell_{2}) = \ell_{2} - a_{2}\ell_{1} \ ; \\ 
 \beta_{0}f = \ell_{2} + a_{2}\ell_{1} \ .
 \end{array} \right.
 \label{actionsurfram}
 \end{equation}
Si $p\not= 2$, on obtient directement que $2 \ell_{2} = s \cdot f + \beta_{0} f \in V$, donc que $\ell_{2} \in V$, ce qui permet d'obtenir que $V = Ind_{B_{S}}^{G_{S}}(\eta)$ par le Corollaire \ref{generam} et par (\ref{actionell1}).\\
Si $p = 2$, l'hypoth\`ese $(\eta\vert_{\mathcal{O}_{F}^{\times}})^{2} = 1$ implique alors que $\eta\vert_{\mathcal{O}_{F}^{\times}}$ est trivial, i.e. que $\eta$ est non ramifi\'e, ce qui contredit notre hypoth\`ese de d\'epart et permet de terminer la d\'emonstration.\\
\newpage

\section{D\'etails de la preuve du Th\'eor\`eme \ref{sppal}}
\label{sppaljac}
\noindent Le but de cette annexe est de d\'evelopper la seconde partie de la Section \ref{irredspal} en adaptant les id\'ees d\'evelopp\'ees dans le cas complexe (cf. par exemple \cite[Section 9]{BH}) au cadre des repr\'esentations modulo $p$. Avant de commencer, on introduit la notation pratique suivante : pour tout $x \in F$, on pose $u(x) := \left( \begin{array}{cc} 1 & x \\ 0 & 1\end{array}\right) \in U$.

\subsection{Rappels et r\'esultats pr\'eliminaires}
On commence par rappeler la d\'efinition du foncteur de Jacquet associ\'e au sous-groupe unipotent $U$. Si $(\pi, V)$ est une repr\'esentation de $G_{S}$ sur $\FFF_{p}$, on note $V(U)$ le $\FFF_{p}$-espace vectoriel engendr\'e par l'ensemble $\{ \pi(u)v - v$; \  $v \in V$, \  $u \in U \}$. Il est clair que cet espace est stable sous l'action de $U$ (par $\pi$); par suite, l'espace quotient $V_{U} := V / V(U)$ (que l'on appelle \emph{module de Jacquet de $V$}) est naturellement muni d'une action du tore diagonal $T_{S} \simeq B_{S} / U$ via $\pi$ (que l'on notera $\pi_{U}$). L'application $[(\pi, V) \mapsto (\pi_{U}, V_{U})]$ d\'efinit alors un foncteur $J_{U}$ allant de la cat\'egorie des repr\'esentations lisses de $G$ vers la cat\'egorie des repr\'esentations lisses de $T$, que l'on appelle le \emph{foncteur de Jacquet (associ\'e \`a $U$)}.\\
Les m\^emes arguments que ceux utilis\'es dans le cas classique (voir par exemple \cite[Section 9]{BH} ou \cite[Proposition 4.4.2]{Bu}) montrent que le foncteur $J_{U}$ est exact \`a droite.

\begin{rem}
Remarquons ici qu'il est facile de voir que si $\chi$ est un caract\`ere lisse de $T_{S}$, on a un isomorphisme naturel $J_{U}(\chi) \simeq \chi$.
\label{JUcar}
\end{rem}

\subsection{Etude de la restriction \`a $B_{S}$}
On se donne maintenant un caract\`ere lisse $\eta : F^{\times} \to \FFF_{p}^{\times}$. On note encore $\eta = \eta \otimes \mathbf{1}$ le caract\`ere lisse de $B_{S}$ obtenu par inflation. L'application d'\'evaluation en $I_{2}$ d\'efinit un morphisme surjectif $B_{S}$-\'equivariant 
$$ \phi : Ind_{B_{S}}^{G_{S}}(\eta) \twoheadrightarrow \eta \ .$$
La d\'ecomposition de Bruhat $G_{S} = B_{S} \sqcup B_{S}sU$ permet d'identifier le noyau $V$ de ce morphisme au sous-espace des fonctions de $Ind_{B_{S}}^{G_{S}}(\eta)$ \`a support dans $B_{S}sU$.
\begin{prop}
$V$ est une repr\'esentation lisse irr\'eductible de $B_{S}$.
\end{prop}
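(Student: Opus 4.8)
The plan is to exploit the explicit model for $V$ as the space of functions in $Ind_{B_{S}}^{G_{S}}(\eta)$ supported on the big Bruhat cell $B_{S}sU$, and to translate everything to the space of functions on $F$ via $x \mapsto f(su(x))$. Concretely, restriction to $U$ identifies $V$ with the space $\mathcal{S}_{c}(F)$ of \emph{compactly supported} locally constant functions $F \to \FFF_{p}$: indeed $f$ is determined by $\varphi(x):=f(su(x))$ because the support is $B_{S}sU$, local constancy of $f$ gives local constancy of $\varphi$, and the condition that the support is \emph{disjoint} from $B_{S}$ forces $\varphi$ to vanish for $v_{F}(x)$ sufficiently negative (one uses the decomposition (\ref{dcpcalc}): $su(x)=\bigl(\begin{smallmatrix} x^{-1} & -1 \\ 0 & x\end{smallmatrix}\bigr)\bigl(\begin{smallmatrix}1 & 0 \\ x^{-1} & 1\end{smallmatrix}\bigr)$, so $su(x)\in B_{S}K_{S}$ with lower-triangular factor in $I_{S}(1)$ once $v_{F}(x)\le 0$, whereas near $x=0$ we are in $B_{S}$). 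One checks $U$ acts by translation $u(y)\varphi(x)=\varphi(x+y)$ (this is the first point of Lemma \ref{lemtechcle}) and $T_{S}$ acts by $\bigl(\begin{smallmatrix}\lambda & 0 \\ 0 & \lambda^{-1}\end{smallmatrix}\bigr)\varphi(x)=\eta(\lambda^{-1})\varphi(\lambda^{-2}x)$ (the general computation inside the proof of Lemma \ref{lemtechcle}). Lissit\'e of $V$ is then immediate since any $\varphi\in\mathcal{S}_{c}(F)$ is fixed by $u(y)$ for $y$ in a small enough subgroup and is supported in a large enough compact, so the stabiliser in $B_{S}$ is open.

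For irreducibility, let $W\subseteq V$ be a nonzero $B_{S}$-subrepresentation and pick $\varphi\in W$ nonzero. First I would produce the indicator function of a small ball in $W$: averaging $\varphi$ over translations $u(y)$ for $y$ in a suitable $\varpi_{F}^{n}\mathcal{O}_{F}$ (a \emph{finite} sum since everything is smooth, and $p\nmid$ nothing problematic — the averaging is over cosets, not over all of a group of order divisible by $p$) replaces $\varphi$ by its "smoothing", and combined with the fact that $\varphi$ is locally constant and compactly supported, a standard finite-combination argument (take differences $u(y)\varphi-\varphi$ to isolate a single level set) shows $W$ contains $\mathbf{1}_{a+\varpi_{F}^{n}\mathcal{O}_{F}}$ for some $a\in F^{\times}$ and some $n$. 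Applying $u(-a)$ we get $\mathbf{1}_{\varpi_{F}^{n}\mathcal{O}_{F}}\setminus\{0\}$-type data; more precisely, after a translation we may assume $W$ contains $\mathbf{1}_{\varpi_{F}^{n}\mathcal{O}_{F}^{\times}}$ for some $n$ (a single "annulus"). Then the action of $T_{S}$, which scales the variable by squares $\lambda^{-2}$ and multiplies values by the scalar $\eta(\lambda^{-1})$, moves this annulus to $\varpi_{F}^{n-2k}\mathcal{O}_{F}^{\times}$ for all $k\in\ZZ$, so $W$ contains $\mathbf{1}_{\varpi_{F}^{m}\mathcal{O}_{F}^{\times}}$ for all $m\equiv n\pmod 2$. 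Finally, using $u(\varpi_{F}^{i})$-translations one passes between the two parity classes (e.g. $\mathbf{1}_{\varpi_{F}^{n}\mathcal{O}_{F}^{\times}}$ translated by an element of valuation $n$ produces functions whose support meets both parities), and combining these one reaches \emph{every} $\mathbf{1}_{\varpi_{F}^{m}\mathcal{O}_{F}^{\times}}$; since these span $\mathcal{S}_{c}(F^{\times})$ and one more translation reaches $\mathbf{1}_{\varpi_{F}^{m}\mathcal{O}_{F}}$ for $m$ large (which includes functions supported near $0$), we conclude $W=\mathcal{S}_{c}(F)=V$.

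The main obstacle is the parity bookkeeping: because $T_{S}$ in $SL_2$ scales the Bruhat-cell coordinate only by \emph{squares}, the naive orbit of a single annulus under $B_{S}$ splits a priori into two parity classes, and one must genuinely use unipotent translations (which do \emph{not} preserve valuation) to glue the two classes together. The cleanest way to organise this is probably to show directly that the $B_{S}$-submodule generated by any nonzero $\varphi$ contains $\mathbf{1}_{\mathcal{O}_{F}}$ (the function $f_{2}$ of the ramified-case discussion, up to the $j$-dictionary), since $\mathbf{1}_{\mathcal{O}_{F}}$ together with its $B_{S}$-translates spans $\mathcal{S}_{c}(F)$ — this is essentially the content of Lemma \ref{cleramphi} and the surjectivity statement for $\{\ell_{1},\ell_{2}\}$ in Corollary \ref{generam}, restricted now to the submodule $V$ rather than all of $Ind_{B_{S}}^{G_{S}}(\eta)$. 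I would then simply invoke that a nonzero smooth compactly supported function, after suitable translation and scaling, dominates a characteristic function $\mathbf{1}_{c+\varpi_F^N\mathcal{O}_F}$ with $c$ of controlled valuation, and chase it to $\mathbf{1}_{\mathcal{O}_F}$ using $u(y)$ and $\alpha_0^{\pm 1}$ exactly as in Lemmas \ref{lemtechcle} and \ref{cleramphi}.
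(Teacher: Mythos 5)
Your model for $V$ is exactly the paper's: restriction to the big cell identifies $V$ with $C^{\infty}_{c}(U)\otimes\eta^{-1}$ (your $\mathcal{S}_{c}(F)$), with $U$ acting by translation and the torus by $\varphi(x)\mapsto\eta(\lambda^{-1})\varphi(\lambda^{-2}x)$, and the smoothness part is fine. The genuine gap is at the crucial first step of irreducibility: the claim that a nonzero $B_{S}$-stable subspace $W$ contains some $\mathbf{1}_{a+\varpi_{F}^{n}\mathcal{O}_{F}}$ because one can ``average over translations'' and ``take differences $u(y)\varphi-\varphi$ to isolate a single level set''. Neither mechanism is available over $\FFF_{p}$. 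Naive averaging over cosets is a sum of $q^{k}$ translates and can vanish identically: for $\varphi=\mathbf{1}_{\mathcal{O}_{F}}-\mathbf{1}_{a+\mathcal{O}_{F}}$ with $v_{F}(a)<0$ (a perfectly admissible generator of $W$), summing the translates that tile a ball containing $a$ gives $0$. Differences cannot isolate a level set either: all $U$-translates of this $\varphi$ lie in the kernel of the translation-invariant functional ``sum of the values over the cosets of $\mathcal{O}_{F}$'', which does not kill any single $\mathbf{1}_{c+\mathcal{O}_{F}}$; and no Fourier-type separation exists, since the pro-$p$ group $u(\varpi_{F}^{-m}\mathcal{O}_{F})$ has no nontrivial smooth $\FFF_{p}^{\times}$-valued character. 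What replaces all of this --- and what the paper does --- is Proposition \ref{faitcrucial}: if $U_{0}=u(\varpi_{F}^{-m}\mathcal{O}_{F})$ contains the support of a nonzero $\varphi\in W$, the span of its $U_{0}$-translates is a nonzero smooth representation of the pro-$p$ group $U_{0}$ made of functions supported in $U_{0}$, hence contains a nonzero $U_{0}$-fixed vector, necessarily a multiple of $\mathbf{1}_{U_{0}}$; thus $W\ni\mathbf{1}_{U_{0}}$. You never invoke this fixed-vector argument at the point where it is indispensable, so as written the proof does not get off the ground.

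Once one ball indicator is in $W$, the rest is simpler than your sketch: the parity question is a red herring, and the passage from a ball to an ``annulus'' $\mathbf{1}_{\varpi_{F}^{n}\mathcal{O}_{F}^{\times}}$ (a translate of a ball is a ball, not an annulus) is unnecessary. Acting by $\alpha_{0}^{\pm k}$ on $\mathbf{1}_{U_{0}}$ produces, up to nonzero scalars, the indicators of $u(\varpi_{F}^{2k-m}\mathcal{O}_{F})$, i.e.\ of arbitrarily small balls, and their $U$-translates already span $C^{\infty}_{c}(U)$, because any locally constant compactly supported function is a finite sum of indicators of cosets of a sufficiently small ball --- one does not need balls of every radius, only a cofinal family. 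This is exactly how the paper concludes (the subgroups $U_{n}=\alpha_{0}^{n}U_{0}\alpha_{0}^{-n}$ form a fundamental system of neighbourhoods of $I_{2}$ in $U$), and it bypasses both the annuli and the parity gluing, whose proposed execution (``translation produces functions whose support meets both parities'') is in any case too vague to verify.
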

\begin{proof}
On note $C^{\infty}_{c}(U)$ l'ensemble des fonctions lisses $U \to \FFF_{p}$ \`a support compact. Il est naturellement muni d'une action lisse de $B_{S}$ d\'efinie par
$$\left( \begin{array}{cc} a & b \\ 0 & a^{-1}\end{array}\right) \cdot \phi := [u(y) \mapsto \phi\left(u\left(\frac{a^{-1}y + b}{a}\right)\right)] $$
et l'application $\left[f \mapsto [u \mapsto f(su)]\right]$ induit un isomorphisme de repr\'esentations de $B_{S}$ :
\begin{equation}
\Psi :  V \simeq C^{\infty}_{c}(U)\otimes \eta^{-1} \ .
\label{isomcool}
\end{equation}
En effet, le seul point d\'elicat \`a v\'erifier est la compatibilit\'e aux actions de $B_{S}$. Pour l'obtenir, on observe que l'on a, pour toute paire $(x, z) \in F \times F$ et tout \'el\'ement $a \in F^{\times}$,
$$s u(z) \left( \begin{array}{cc} a & x \\ 0 & a^{-1} \end{array}\right) = \left(\begin{array}{cc} a^{-1} & 0 \\ 0 & a \end{array}\right) s u\left( \frac{a^{-1}z + x}{a}\right) \ .$$
On en d\'eduit donc que pour tout $b \in B_{S}$ et toute fonction $f \in V$, on a :
$$\forall y \in F, \  \Psi(b \cdot f)(u(y)) = \eta(b^{-1})(b \cdot \Psi(f))(u(y))\ . $$

Il nous suffit donc de d\'emontrer l'irr\'eductibilit\'e de la repr\'esentation $C^{\infty}_{c}(U)$ pour conclure.    Il est tout d'abord facile de voir que $U$ est la r\'eunion de ses sous-groupes ouverts compacts et que toute fonction $f \in C^{\infty}_{c}(U)$ est, par lissit\'e, \`a support dans un sous-groupe ouvert compact de $U$ (qui d\'epend bien s\^ur de $f$). Nous allons montrer que si l'on fixe un tel sous-groupe ouvert compact de $U$, sa fonction indicatrice engendre $C^{\infty}_{c}(U)$ comme repr\'esentation de $B_{S}$, ce qui nous permettra de conclure en utilisant la Proposition \ref{faitcrucial}.\\ 

Soient donc $U_{0}$ un sous-groupe ouvert compact de $U$ et $C^{\infty}_{c}(U_{0})$ l'espace des fonctions de $C^{\infty}_{c}(U)$ \`a support dans $U_{0}$. Il est clair que l'espace des vecteurs $U_{0}$-invariants de $C^{\infty}_{c}(U_{0})$ est de dimension 1 engendr\'e par la fonction indicatrice $1_{U_{0}}$. Comme $U_{0}$ est un pro-$p$-groupe, la Proposition \ref{faitcrucial} s'applique et toute sous-repr\'esentation non nulle de $C^{\infty}_{c}(U_{0})$ doit donc contenir $1_{U_{0}}$.\\
Montrons que $1_{U_{0}}$ engendre $C^{\infty}_{c}(U)$ (et pas seulement $C^{\infty}_{c}(U_{0})$) comme repr\'esentation de $B_{S}$. La famille de sous-groupes ouverts $U_{n} := \alpha_{0}^{n}U_{0}\alpha_{0}^{-n}$ ($n \in \NN$) est un syst\`eme fondamental de voisinages de $I_{2}$ dans $U$. De plus, l'action du groupe $U$ sur $1_{U_{n}}$ permet de r\'ecup\'erer toutes les fonctions indicatrices translat\'ees $1_{U_{n}u}$, $u \in U$. En \'ecrivant que $U_{0} = \alpha_{0}^{-n} U_{n} \alpha_{0}^{n}$ avec $\alpha_{0} \in T_{S}$, on obtient que l'action de $B_{S} = T_{S}U_{S}$ sur $1_{U_{0}}$ permet de r\'ecup\'erer tout l'espace $C^{\infty}_{c}(U)$, ce qui prouve son irr\'eductibilit\'e comme repr\'esentation de $B_{S}$.
\end{proof}
\begin{cor}
$Ind_{B_{S}}^{G_{S}}(\eta)$ est une repr\'esentation de $B_{S}$ de longueur 2.
\label{longueurG}
\end{cor}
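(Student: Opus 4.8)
Le plan est de d\'eduire ce corollaire imm\'ediatement de la proposition pr\'ec\'edente. On part de la suite exacte courte de repr\'esentations de $B_{S}$ fournie par la d\'ecomposition de Bruhat et l'application d'\'evaluation en $I_{2}$ :
$$ 0 \longrightarrow V \longrightarrow Ind_{B_{S}}^{G_{S}}(\eta) \stackrel{\phi}{\longrightarrow} \eta \longrightarrow 0 \ , $$
o\`u $V$ d\'esigne le sous-espace des fonctions \`a support dans la grande cellule $B_{S}sU$. Cette suite exhibe une filtration de $Ind_{B_{S}}^{G_{S}}(\eta)$ dont les deux quotients successifs sont $V$ et $\eta$.

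Il reste alors \`a observer que ces deux quotients sont irr\'eductibles comme repr\'esentations de $B_{S}$ : pour $\eta$ c'est clair puisqu'il est de dimension $1$ sur $\FFF_{p}$, et pour $V$ c'est pr\'ecis\'ement le contenu de la proposition qui pr\'ec\`ede (irr\'eductibilit\'e obtenue via l'isomorphisme $\Psi$ de (\ref{isomcool}) et l'\'etude de la repr\'esentation $C^{\infty}_{c}(U)$ de $B_{S}$). La cha\^ine $\{0\} \subset V \subset Ind_{B_{S}}^{G_{S}}(\eta)$ est donc une suite de composition \`a deux termes : $Ind_{B_{S}}^{G_{S}}(\eta)$ est de longueur finie, et par le th\'eor\`eme de Jordan-H\"older sa longueur vaut exactement $2$ (elle est $\geq 2$ car $V$ et $\eta$ sont tous deux non nuls, la non-nullit\'e de $V$ venant de ce que la cellule $B_{S}sU$ est non vide, et $\leq 2$ gr\^ace \`a la filtration que l'on vient d'\'ecrire).

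Il n'y a pas ici d'obstacle r\'eel : toute la difficult\'e a \'et\'e absorb\'ee par la preuve de l'irr\'eductibilit\'e de $V$. Les seuls points \`a ne pas oublier sont la $B_{S}$-\'equivariance du morphisme d'\'evaluation $\phi$ --- imm\'ediate puisque $B_{S}$ est le sous-groupe par lequel on induit --- ainsi que l'identification de son noyau au sous-espace des fonctions support\'ees sur la cellule ouverte, cons\'equence directe de la d\'ecomposition $G_{S} = B_{S} \sqcup B_{S}sU$.
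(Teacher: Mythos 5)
Votre argument est correct et co\"incide avec celui que le papier laisse implicite : le corollaire d\'ecoule directement de la suite exacte courte $0 \to V \to Ind_{B_{S}}^{G_{S}}(\eta) \to \eta \to 0$, de l'irr\'eductibilit\'e de $V$ \'etablie dans la proposition pr\'ec\'edente et de celle, \'evidente, du caract\`ere $\eta$. Rien \`a redire.
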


\subsection{Fin de la preuve du Th\'eor\`eme \ref{sppal}}
Commen\c{c}ons par rappeler qu'il n'existe pas de mesure de Haar sur $F$ \`a valeurs dans $\FFF_{p}$ \cite[Proposition 6]{VigM2}\footnote{On rappelle qu'une mesure de Haar sur $F$ \`a valeurs dans $\FFF_{p}$ est une forme lin\'eaire non nulle $C_{c}^{\infty}(F) \to \FFF_{p}$ invariante par translations.}. Ceci implique que le module de Jacquet de $C^{\infty}_{c}(U)$ est nul et donc, gr\^ace \`a l'isomorphisme (\ref{isomcool}), que l'on a le r\'esultat suivant.
\begin{prop}
Le module de Jacquet de $V$ est nul.
\label{nonhaar}
\end{prop}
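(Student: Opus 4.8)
The plan is to reduce the statement, via the isomorphism (\ref{isomcool}), to the vanishing of the Jacquet module of $C^\infty_c(U)$, and to deduce the latter from the non-existence of an $\FFF_p$-valued Haar measure on $F$ recalled just above.

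First I would note that the twist by $\eta^{-1}$ is invisible to the Jacquet construction. Indeed $\eta^{-1}$, being inflated from $T_S$, is trivial on $U$, so for the representation $C^\infty_c(U) \otimes \eta^{-1}$ the operators attached to $u \in U$ coincide with those of $C^\infty_c(U)$ itself; hence the subspace spanned by the elements $u\cdot\phi - \phi$ is the same for both, and $J_U(C^\infty_c(U)\otimes\eta^{-1}) \simeq J_U(C^\infty_c(U))\otimes\eta^{-1}$ as representations of $T_S$. Combined with (\ref{isomcool}) this gives $J_U(V) \simeq J_U(C^\infty_c(U))\otimes\eta^{-1}$, so it is enough to prove that $J_U(C^\infty_c(U)) = 0$, i.e. that the subspace $N := C^\infty_c(U)(U)$ spanned by the $u\cdot\phi - \phi$ is all of $C^\infty_c(U)$.

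The key step is then a duality argument. A linear form $\ell : C^\infty_c(U) \to \FFF_p$ vanishes on $N$ exactly when it is $U$-invariant. Transporting everything along the isomorphism $U \xrightarrow{\ \sim\ } F$, $u(x) \mapsto x$ --- under which the $U$-action on $C^\infty_c(U)$ becomes translation on $C^\infty_c(F)$ --- such an $\ell$ is precisely a translation-invariant linear form $C^\infty_c(F) \to \FFF_p$, that is, an $\FFF_p$-valued Haar measure on $F$ (or the zero form). By \cite[Proposition 6]{VigM2} there is no nonzero such form, so the annihilator of $N$ in the full linear dual of $C^\infty_c(U)$ is $\{0\}$. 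Over a field a subspace with trivial annihilator is the whole space (a proper subspace is killed by any nonzero functional lifted from the quotient), whence $N = C^\infty_c(U)$ and $J_U(C^\infty_c(U)) = 0$. Feeding this back into the previous step gives $J_U(V) = 0$, which is the assertion.

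The argument is essentially formal once \cite[Proposition 6]{VigM2} is granted; the one point deserving care is the last implication ``trivial annihilator $\Rightarrow$ whole space'', which is where the absence of an $\FFF_p$-valued Haar measure is genuinely used, together with the routine but necessary identification of $U$-invariant functionals on $C^\infty_c(U)$ with such measures on $F$.
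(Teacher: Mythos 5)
Your proof is correct and follows essentially the same route as the paper: the paper also deduces the vanishing of $J_{U}(C^{\infty}_{c}(U))$ from the non-existence of an $\FFF_{p}$-valued Haar measure on $F$ (\cite[Proposition 6]{VigM2}) and then transfers the conclusion to $V$ via the isomorphisme (\ref{isomcool}). You merely make explicit two steps the paper leaves implicit, namely the annihilator argument identifying $U$-invariant functionals on $C^{\infty}_{c}(U)$ with translation-invariant forms on $C^{\infty}_{c}(F)$, and the fact that the twist by $\eta^{-1}$, being trivial on $U$, does not affect the Jacquet module.
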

\begin{cor}
Le module de Jaquet de $Ind_{B_{S}}^{G_{S}}(\eta)$ est \'egal au caract\`ere $\eta$.
\end{cor}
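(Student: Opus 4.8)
Le plan est de d\'eduire cet \'enonc\'e par fonctorialit\'e, \`a partir de la suite exacte courte de $B_S$-repr\'esentations
$$ 0 \longrightarrow V \longrightarrow Ind_{B_S}^{G_S}(\eta) \longrightarrow \eta \longrightarrow 0 $$
mise en \'evidence ci-dessus (c'est elle qui fournit le Corollaire \ref{longueurG}), o\`u la fl\`eche de droite est l'\'evaluation en $I_2$ et $V$ d\'esigne le sous-espace des fonctions \`a support dans $B_S s U$. Je commencerais par appliquer \`a cette suite le foncteur de Jacquet $J_U$ : comme on l'a rappel\'e, ce foncteur est exact \`a droite, de sorte que l'on obtient une suite exacte de $T_S$-repr\'esentations
$$ J_U(V) \longrightarrow J_U(Ind_{B_S}^{G_S}(\eta)) \longrightarrow J_U(\eta) \longrightarrow 0 \ . $$

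J'invoquerais alors la Proposition \ref{nonhaar}, qui garantit que $J_U(V) = 0$, ainsi que la Remarque \ref{JUcar}, qui identifie $J_U(\eta)$ au caract\`ere $\eta$. La fl\`eche $J_U(Ind_{B_S}^{G_S}(\eta)) \longrightarrow \eta$ est donc surjective par exactitude \`a droite, et son noyau, \'egal \`a l'image de $J_U(V) = 0$, est nul : c'est par cons\'equent un isomorphisme de $T_S$-repr\'esentations, ce qui est exactement l'\'enonc\'e annonc\'e.

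Il n'y a pas de r\'eelle difficult\'e dans cet argument, tout le travail ayant \'et\'e accompli en amont : le c\oe{}ur de la preuve est l'annulation du module de Jacquet de $V$ (Proposition \ref{nonhaar}), elle-m\^eme issue de l'inexistence d'une mesure de Haar sur $F$ \`a valeurs dans $\FFF_p$ et de l'isomorphisme (\ref{isomcool}). Le seul point qu'il faudrait \'evoquer avec un minimum de soin est que l'exactitude \`a droite de $J_U$ et la $T_S$-\'equivariance des fl\`eches valent encore dans la cat\'egorie des repr\'esentations lisses de $B_S$ (et pas seulement de $G_S$), ce qui est clair puisque la construction de $V(U)$ puis de $V_U = V/V(U)$ n'utilise que l'action de $U$ et celle de $T_S \simeq B_S/U$.
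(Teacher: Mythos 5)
Votre argument est correct et co\"incide essentiellement avec celui du texte : on applique le foncteur de Jacquet, exact \`a droite, \`a la suite exacte courte de $B_{S}$-repr\'esentations $0 \to V \to Ind_{B_{S}}^{G_{S}}(\eta) \to \eta \to 0$, puis on conclut par la Proposition \ref{nonhaar} ($V_{U} = 0$) et la Remarque \ref{JUcar}. Votre pr\'ecision finale sur le fait que $J_{U}$ s'applique bien aux repr\'esentations lisses de $B_{S}$ est un compl\'ement pertinent mais ne change pas la nature de la preuve.
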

\begin{proof}
La suite exacte courte de $B_{S}$-repr\'esentations
\begin{equation}
1 \longrightarrow V \longrightarrow Ind_{B_{S}}^{G_{S}}(\eta) \longrightarrow \eta \longrightarrow 1
\label{suitexB}
\end{equation}
fournit, apr\`es application du foncteur de Jacquet (qui est exact \`a droite) et utilisation de la Remarque \ref{JUcar}, la suite exacte de $T_{S}$-repr\'esentations
$$ V_{U} \longrightarrow \left( Ind_{B_{S}}^{G_{S}}(\eta)\right)_{U} \longrightarrow \eta \longrightarrow 1 \ ,$$
ce qui permet de conclure gr\^ace \`a la Proposition \ref{nonhaar}.
\end{proof}
\begin{cor}
Soit $\eta = \eta \otimes \mathbf{1}$ un caract\`ere lisse de $B_{S}$. Si $\mathbf{1}$ est un sous-quotient de la repr\'esentation $Ind_{B_{S}}^{G_{S}}(\eta)$ de $G_{S}$, alors $\eta = \mathbf{1}$.
\label{souquoJU}
\end{cor}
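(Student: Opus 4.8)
The plan is to reduce to the restriction to $B_{S}$ and use the length-$2$ description of $Ind_{B_{S}}^{G_{S}}(\eta)$ as a $B_{S}$-representation established just above. Write $\Pi := Ind_{B_{S}}^{G_{S}}(\eta)$ and suppose the trivial representation $\mathbf{1}$ of $G_{S}$ is a subquotient of $\Pi$, say $W_{2}/W_{1}\simeq \mathbf{1}$ with $W_{1}\subseteq W_{2}\subseteq \Pi$ sub-$G_{S}$-representations. The first step is the observation that, since $W_{1}$ and $W_{2}$ are a fortiori $B_{S}$-stable, the restriction $\mathbf{1}\vert_{B_{S}}$ — i.e. the trivial character of $B_{S}$ — is a subquotient of $\Pi\vert_{B_{S}}$.

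Next I would invoke Corollary \ref{longueurG} together with the short exact sequence (\ref{suitexB}): the $B_{S}$-representation $\Pi$ has length $2$, its Jordan--H\"older constituents being the irreducible $V$ (irreducibility being exactly the Proposition preceding Corollary \ref{longueurG}) and the one-dimensional character $\eta$. Since a simple subquotient of a module of finite length necessarily occurs among its composition factors, $\mathbf{1}\vert_{B_{S}}$ must be isomorphic, as a $B_{S}$-representation, to $V$ or to $\eta$.

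Finally I would rule out the case $\mathbf{1}\vert_{B_{S}}\simeq V$: by the isomorphism (\ref{isomcool}), $V\simeq C^{\infty}_{c}(U)\otimes\eta^{-1}$ is infinite-dimensional over $\FFF_{p}$ (equivalently, $J_{U}(V)=\{0\}$ by Proposition \ref{nonhaar}, whereas $J_{U}(\mathbf{1})\simeq \mathbf{1}$ is nonzero), so it cannot be a one-dimensional character. Hence $\mathbf{1}\vert_{B_{S}}\simeq \eta$, which by Lemma \ref{caracbs} forces the smooth character $\eta:F^{\times}\to\FFF_{p}^{\times}$ to be trivial, i.e. $\eta=\mathbf{1}$. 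The only slightly delicate point is the opening reduction together with the Jordan--H\"older bookkeeping — one must be comfortable with the facts that "subquotient" is transitive and is preserved by restriction to a subgroup, and that a length-one subquotient of a finite-length module appears in every composition series — but these are formal, so I do not anticipate a genuine obstacle.
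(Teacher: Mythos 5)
Your proof is correct, and it takes a slightly different route from the one the paper intends. The paper states this corollary without proof, immediately after the computation $J_{U}(Ind_{B_{S}}^{G_{S}}(\eta)) \simeq \eta$, and the concluding paragraph makes clear that the intended derivation is: a trivial subquotient forces the trivial character of $T_{S}$ to show up in this Jacquet module, by right exactness of $J_{U}$. That argument is clean when $\mathbf{1}$ occurs as a \emph{quotient} of (a subrepresentation of) the induction, but since $J_{U}$ is only right exact in characteristic $p$ one cannot simply say that passing to subquotients commutes with $J_{U}$; the case where $\mathbf{1}$ is a subobject needs a separate (easy) remark, e.g. evaluation of a $G_{S}$-invariant function or Frobenius reciprocity. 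Your argument sidesteps this subtlety entirely: you restrict the subquotient to $B_{S}$, invoke the length-$2$ composition series of $Ind_{B_{S}}^{G_{S}}(\eta)\vert_{B_{S}}$ from Corollary \ref{longueurG} and the sequence (\ref{suitexB}), and identify the trivial $B_{S}$-character with the factor $\eta$ because the other factor $V \simeq C^{\infty}_{c}(U)\otimes\eta^{-1}$ is infinite-dimensional (equivalently has vanishing Jacquet module by Proposition \ref{nonhaar}); Lemma \ref{caracbs} then gives $\eta=\mathbf{1}$. So both proofs rest on the same preliminary results (irreducibility of $V$, the exact sequence, the vanishing of $J_{U}(V)$), but yours replaces the passage through $J_{U}$ of the full induced representation by Jordan--H\"older bookkeeping for the $B_{S}$-restriction, which is more elementary and avoids the only delicate functorial point; the paper's formulation via the Jacquet module is the one that matches the classical complex argument and generalizes more readily. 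Your Jordan--H\"older steps (transitivity of subquotients, stability under restriction to $B_{S}$, and the fact that a simple subquotient of a finite-length module is a composition factor) are all standard and correctly used.
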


On termine la preuve du Th\'eor\`eme \ref{sppal} de la mani\`ere suivante : si l'on suppose que $Ind_{B_{S}}^{G_{S}}(\eta)$ est une repr\'esentation r\'eductible de $G_{S}$, elle admet alors n\'ecessairement un sous-quotient de dimension 1 (\`a cause du Corollaire \ref{longueurG} et de la suite exacte (\ref{suitexB})). La Proposition \ref{caracgs} assure que ce sous-quotient doit \^etre le caract\`ere trivial de $G_{S}$, ce qui implique que $\eta$ doit \^etre le caract\`ere trivial par exactitude \`a droite du foncteur de Jacquet (Corollaire \ref{souquoJU}).

\newpage
\section{Calculs de la Section \ref{decompo}}
\label{calcdecompo}
On d\'etaille ici le calcul donnant la surjectivit\'e de l'application du Th\'eor\`eme \ref{decomp}.\\
On sait \cite[Corollaires 4.1.1 et 4.1.4]{Br1} que $\pi(r,0,1)$ est une repr\'esentation irr\'eductible de $G$, ce qui implique en particulier que l'on a
\begin{equation}
\pi(r,0,1) = \langle G\cdot v_{r,0} \rangle = \langle G\cdot v_{r,\infty} \rangle \ .
\label{Grep}
\end{equation}
Par ailleurs, on dispose aussi des deux \'egalit\'es suivantes : 
$$ x^{r} = \sigma_{r}\Big(\left(\begin{array}{cc} 0 & 1 \\ -1 &0  \end{array} \right)\Big)(y^{r}) \ ;$$
$$y^{r} = \sigma_{r}\Big(\left( \begin{array}{cc} 0 & -1 \\ 1 & 0 \end{array}\right)\Big)(x^{r}) \ ;$$
qui assurent que l'on a les identit\'es suivantes dans c-ind$_{K\*Z}^{G}(\sigma_{r})$ (qui sont bien entendues valables a fortiori modulo $T_{r}$, i.e. dans $\pi(r,0,1)$) : 
\begin{equation}
[I_{2}, y^{r}] = [\left( \begin{array}{cc} 0 & -1 \\ 1 & 0 \end{array}\right), x^{r}] \in \ \langle K_{S}\cdot [I_{2}, x^{r}] \rangle \ ;
\label{ixer}
\end{equation}
\begin{equation}
[\alpha, x^{r}] = [\left( \begin{array}{cc} 0 & p^{-1} \\ -p & 0 \end{array}\right)\*\alpha, y^{r}] \in \ \langle G_{S}\cdot [\alpha, y^{r}] \rangle = \langle G_{S} \cdot [\beta , x^{r}] \rangle \ .
\label{grecer}
\end{equation}

\noindent Soit maintenant $F \in \pi(r,0,1)$. D'apr\`es (\ref{Grep}), $F$ est combinaison lin\'eaire d'\'el\'ements de la forme 
$$ g\cdot \overline{[I_{2}, x^{r}]} \  ;  \ h\cdot\overline{[\beta, x^{r}]}$$
avec $g$, $h \in G$. Soit $f$ est un tel \'el\'ement. On peut d\'ecomposer $g$ (respectivement $h$) en un produit fini de matrices $g_{1}, \ldots, g_{\ell}$ (resp. $h_{1}, \ldots, h_{\ell}$) de $G$ satisfaisant chacune l'un des quatre cas suivants : 
\begin{itemize}
\item ou bien $g$ ou $h$ est dans $G_{S}$, auquel cas $f$ est dans $\pi_{r,\infty}$ ou dans $\pi_{r,0}$ et il n'y a rien \`a d\'emontrer.\\
\item Ou bien $\det g$ (resp. $\det h$) est un carr\'e de $\mathbb{Q}_{p}^{*}$, auquel cas il existe $z \in Z$ tel que $g \in G_{S}\*Z$ (resp. $h \in G_{S}\*Z$). Comme $Z$ agit par des scalaires sur $\pi(r,0,1)$, on obtient de nouveau que $f$ appartient \`a $\pi_{r,\infty}$ (resp. $\pi_{r,0}$).\\
\item Ou bien $\det g = p$ (resp. $\det h = p$), ce qui implique alors que $g\*\alpha^{-1} \in G_{S}$ (resp. $h\*\alpha^{-1} \in G_{S}$). On peut donc \'ecrire $g = g_{0}\*\alpha$ (resp. $h = g_{0}\*\alpha$) avec $g_{0} \in G_{S}$, ce qui implique gr\^ace \`a (\ref{grecer}) que
$$f = g\cdot \overline{[I_{2}, x^{r}]} = g_{0}\cdot \overline{[\alpha, x^{r}]} \in \ \langle G_{S}\cdot \overline{[\alpha, y^{r}]} \rangle =  \langle G_{S} \cdot \overline{[\beta , x^{r}]} \rangle = \pi_{r,0} \ . $$
(resp. : gr\^ace \`a (\ref{ixer}) on a $f = h \cdot \overline{[\alpha, y^{r}]} = g_{0}\*\alpha^{2}\cdot \overline{[I_{2}, y^{r}]} \in \ \langle G_{S}\cdot \overline{[I_{2}, x^{r}]} \rangle = \pi_{r,\infty}$ car $\alpha^{2}$ est un \'el\'ement de $Z$.)\\
\item Ou bien $\det g = u \in \mathbb{Z}_{p}^{*}$ n'est pas un carr\'e de $\mathbb{Q}_{p}^{*}$, auquel cas on peut supposer, quitte \`a multiplier \`a gauche par un \'el\'ement de $K_{S}Z$ (ce qui pr\'eserve $\pi_{r, \infty}$ et $\pi_{r,0}$), que $g = \left( \begin{array}{cc} u & 0 \\ 0 & 1\end{array}\right)$. On a alors 
$$ \begin{array}{ccl} 
f = g\cdot \overline{[I_{2}, x^{r}]} & = & \overline{[\left( \begin{array}{cc} u & 0\\ 0 & 1 \end{array}\right), x^{r}]}\\
& = & \overline{[I_{2}, \sigma_{r}(\left( \begin{array}{cc} u & 0\\ 0 & 1 \end{array}\right))(x^{r})]}\\
& = & \overline{u}^{r}\*\overline{[I_{2}, x^{r}]}
\end{array}$$
o\`u $\overline{u}$ d\'esigne l'image modulo $p$ de $u$, de sorte que l'on a $f \in \pi_{r,\infty}$.\\
De m\^eme, si l'on suppose que $h$ est de d\'eterminant $u \in \mathbb{Z}_{p}^{*}$ qui n'est pas un carr\'e de $\mathbb{Q}_{p}^{*}$, alors on a aussi (en supposant de m\^eme que l'on prend $h = \left( \begin{array}{cc} u & 0 \\ 0 & 1\end{array}\right)$): 
$$  \begin{array}{ccl} 
f = h\cdot \overline{[\alpha, y^{r}]} & = & \overline{[\left( \begin{array}{cc} u & 0\\ 0 & 1 \end{array}\right)\*\alpha, y^{r}]}\\
& = & \overline{[\alpha, \sigma_{r}(\left( \begin{array}{cc} u & 0\\ 0 & 1 \end{array}\right))(y^{r})]}\\
& = & \overline{[\alpha, y^{r}]}
\end{array} $$
de sorte que $f$ appartient \`a $\pi_{r,0}$.
\end{itemize}
Une r\'ecurrence imm\'ediate sur $\ell$ permet alors de conclure que $f$ est effectivement contenu dans $\pi_{r,0}$ ou dans $\pi_{r,\infty}$, et donc que $F$ est finalement d\'ecomposable comme somme d'un \'el\'ement de $\pi_{r,0}$ et d'un \'el\'ement de $\pi_{r,\infty}$, ce qui prouve la surjectivit\'e recherch\'ee.


\newpage

\thebibliography{99}
\bibitem{Moivieux} R. Abdellatif, \emph{Structure de quelques alg\`ebres de Hecke associ\'ees \`a $SL_{2}(F)$}, manuscrit (2009).
\bibitem{MoiRachelbis} R. Abdellatif, \emph{Modules simples sur la pro-$p$-alg\`ebre de Hecke de $SL_{2}(F)$}, en pr\'eparation (2010).
\bibitem{MoiRachel} R. Abdellatif, \emph{Etude du foncteur des invariants sous l'action du pro-$p$-Iwahori de $SL_{n}(F)$}, en pr\'eparation (2010).
\bibitem{BH} C.J. Bushnell, G. Henniart, \emph{The local Langlands conjecture for GL(2)}, ed. Springer (2006).
\bibitem{Bu} D. Bump, \emph{Automorphic forms and representations}, Cambridge Studies in Advanced Mathematics 55 (1998).
\bibitem{BL1} L. Barthel, R. Livn\'e, \emph{Irreducible modular representations of GL(2) of a local field}, Duke Math. J. 75 (1994), no. 2, 261--292.
\bibitem{BL2} L. Barthel, R. Livn\'e, \emph{Modular representations of GL(2) of a local field : the ordinary, unramified case}, J. Number Theory 55 (1995), 1--27.
\bibitem{Br1} Ch. Breuil, \emph{Sur quelques repr\'esentations modulaires et $p$-adiques de $GL_{2}(\mathbb{Q}_{p})$, I} . Compositio Math. 138 (2003) no. 2, 165--188. 
\bibitem{BrCol} Ch. Breuil, \emph{Representations of Galois and of $GL_{2}$ in characteristic $p$}, Cours \`a l'universit\'e de Columbia (Automne 2007).
\bibitem{CC} C. Cheng, \emph{Mod $p$ representations of $SL_{2}(F)$}, manuscrit (2010).
\bibitem{Hen} G. Henniart, \emph{Repr\'esentations des groupes r\'eductifs $p$-adiques et de leurs sous-groupes distingu\'es cocompacts}, Journal of Algebra 236 (2001).
\bibitem{Jey} A.V. Jeyakumar, \emph{Principal indecomposable representations for the group 
$SL(2,q)$}, J. Algebra 30 (1974) 444--458.
\bibitem{IM} N. Iwahori, H. Matsumoto, \emph{On some Bruhat decomposition and the structure of the Hecke rings of $\mathfrak{p}$-adic Chevalley groups}, Publ. Math., Inst. Hautes Etud. Sci. 25 (1965) 5--48.
\bibitem{LL} J.-P. Labesse, R. Langlands, \emph{$L$-indistinguishability for $SL(2)$}, Canad. J. Math. 31 (1979), no. 4, 726--785. 
\bibitem{Oll1} R. Ollivier, \emph{Le foncteur des invariants sous l'action du pro-$p$-Iwahori de $GL_{2}(F)$}, J. f$\ddot{\text{u}}$r die reine und angewandte Mathematik 635 (2009) 149--185.
\bibitem{Pask} V. Pa$\check{\text{s}}$k$\bar{\text{u}}$nas, \emph{Coefficient systems and supersingular representations of $GL_{2}(F)$}, M\'emoires de la SMF 99 (2004).
\bibitem{Ser} J.-P. Serre, \emph{Arbres, amalgames, $SL_2$}, Ast\'erisque 46 (SMF), 1977.
\bibitem{Vig04} M.-F. Vign\'eras, \emph{Representations modulo p of the p-adic group GL(2, F)}, Compositio Math. 140, no. 2 (2004), 333--358. 
\bibitem{VigM2} M.-F. Vign\'eras, \emph{Repr\'esentations lisses de $GL(2,F)$}, notes de cours de Master 2 (2006).
\bibitem{Vig06} M.-F. Vign\'eras, \emph{Repr\'esentations irr\'eductibles de $GL(2,F)$ modulo $p$}, in << L-functions and Galois representations >>, LMS Lecture Notes 320 (2007).
\bibitem{Vig08} M.-F. Vign\'eras, \emph{S\'erie principale modulo $p$ de groupes r\'eductifs $p$-adiques}, GAFA 17 (2008), 2090--2112.
\end{document}